\subjclass[2010]{57N65, 55N05, 55N35, 55Q05, 53C22(primary), and 30F99, 53B21(secondary)}
\newcommand{\midarrow}{\tikz \draw[-triangle 90] (0,0) -- +(.1,0);}
\newcommand{\new}[1]{\textcolor{black}{#1}} 
\theoremstyle{plain}
\newtheorem{Prop}{Proposition}[section]
\newtheorem{Proposition}[Prop]{Proposition}
\newtheorem{Theorem}[Prop]{Theorem}
\newtheorem{Cor}[Prop]{Corollary}
\newtheorem{Corollary}[Prop]{Corollary}
\newtheorem{Lemma}[Prop]{Lemma}
\theoremstyle{definition}
\newtheorem{Def}[Prop]{Definition}
\newtheorem{Definition}[Prop]{Definition}
\theoremstyle{remark}
\newtheorem{Remark}[Prop]{Remark}
\newtheorem{Example}[Prop]{\bf Example}
\newcommand{\diam}{\operatorname{Diam}\nolimits}
\newcommand{\Rips}{\operatorname{Rips}\nolimits}
\newcommand{\cRips}{\operatorname{\overline{Rips}}\nolimits}
\newcommand{\Cech}{\operatorname{Cech}\nolimits}
\newcommand{\cCech}{\operatorname{\overline{Cech}}\nolimits}
\def\RR{{\mathbb R}}
\def\FF{{\mathbb F}}
\def\II{{\mathbb I}}
\def\ZZ{{\mathbb Z}}
\def\NN{{\mathbb N}}
\def\P{{\mathcal P}}
\def\C{{\mathcal C}}
\def\S{{\mathcal S}}
\def\D{{\mathcal D}}
\def\L{{\mathcal L}}
\def\cB{{\overline {B}}}
\def\f{{\varphi}}
\def\eps{{\varepsilon}}
\def\s{\sigma}
 \def\size{\mathop{\textrm{size}}\nolimits}
  \def\crit{\mathop{\textrm{crit}}\nolimits}
\def\length{\mathop{\textrm{length}}\nolimits}
\def\Rcrit{\mathop{\textrm{Rcrit}}\nolimits}
\def\arginf{\mathop{\textrm{arginf}}}
\numberwithin{equation}{section}
\title[1-Dimensional Intrinsic Persistence of geodesic spaces]
{1-Dimensional Intrinsic Persistence of geodesic spaces}
\author{\v Ziga ~Virk}
\address{University of Ljubljana}
\email{zigavirk@gmail.com}
\begin{document}

\thanks{This research was conducted while the author was a postdoctoral researcher in the group of prof. Edelsbrunner at IST Austria.}
\thanks{The author would like to thank the referees for careful reading and helpful suggestions.}

\maketitle
\begin{center}
\today
\end{center}

\begin{abstract}
Given a compact geodesic space $X$, we apply the fundamental group and alternatively the first homology group functor to the corresponding Rips or \v Cech filtration of $X$ to obtain what we call a persistence object. This paper contains the theory describing such persistence: properties of the set of critical points, their precise relationship to the size of holes, the structure of persistence and the relationship between open and closed, Rips and \v Cech induced persistences. Amongst other results we prove that a Rips critical point $c$ corresponds to an isometrically embedded circle of length $3c$, that a homology persistence of a locally contractible space with coefficients in a field encodes the lengths of the lexicographically smallest base, and that Rips and \v Cech induced persistences are isomorphic up to a factor $3/4$. The theory describes geometric properties of the underlying space encoded and extractable from persistence.
\end{abstract}

\tableofcontents
\section{Introduction}

The field of Topological Data Analysis (TDA) is  in a large part based on the concept of an approximation or discretization of a metric space by a complex of choice, Rips or \v Cech, open or closed. In order to avoid an artificial choice of the scale parameter the complexes are constructed for all scales and connected by natural inclusions, together forming a filtration of a metric space. To obtain interesting topological summaries one  computes the corresponding persistent homology using coefficients from a preferred field for computational reasons. The main pipeline of TDA along these lines follows two steps:
\begin{enumerate}
 \item Given a metric space, construct a filtration using some  construction of a complex. Any construction should roughly provide the same information.
 \item For a chosen field compute persistent homology, which roughly represents the size of holes in the space. 
\end{enumerate}
While these two steps are up to some point theoretically justified in the general case by the Rips/\v Cech interleaving and the Nerve theorem, there is nonetheless a lack of a precise formulation in any setting  which would explain how the result depends on a choice of a complex or how the sizes of holes are measured precisely (with a notable exception \cite{EP}). Consequently there is a  push in the research community to develop statistical framework for the interpretation of the obtained persistence diagrams (PDs), which would allow applications of the methods of artificial intelligence.

With this paper we are initiating a  research program of  intrinsic persistence using a different approach. Our goal is not just to provide a theoretical background of TDA but to build a general theory of approximations of geodesic spaces by complexes/filtrations of which the persistent homology via geodesic metric is a special case. Our primary setting is a filtration of a geodesic metric space, which covers the most interesting families: compact Riemannian manifolds, simplicial complexes, Cayley graphs and Peano continua. It can be thought of as a multiscale discretization of a space. In this paper we develop the corresponding theory of $1$-dimensional persistence, which refers to the persistence obtained from any filtration by applying the fundamental group or the homology group (with any coefficients) functor. This expands the standard notion of persistence. Besides applications to TDA, such generalization allows us to build a theory whose results refer to other settings in which similar approximations are considered.  These include the Shape Theory, which studies the limit of filtrations when the scale parameter goes to zero, and Coarse Geometry (in particular Anti-Cech approximations of Dranishnikov and results of \cite{Comb}) where the scale parameter goes to infinity. In the later case, the results of this paper provide a precise interpretation for coarse simple connectedness in terms of the lengths of relations in a Cayley graph of a group. Furthermore, the results imply that persistence  encodes geometric information of the underlying space and should also represent an interesting tool to geometry. More details are provided in the Future Work Section \ref{SectFW}.  

The core thread of the theory introduced in this the paper is developed using open Rips filtration and the corresponding fundamental group functor for a compact geodesic space $X$. Parallel to it we will be stating results for the corresponding homology. In the end we deduce equivalent results for \v Cech and close filtrations. The main approach is the interaction between a discrete setting modelled by Rips complexes, and a continuous setting of a geodesic space. This interaction is set up by Definition \ref{DefRLoop} and Proposition \ref{PropRips}, and studies throughout the paper.
The main new results developed by this paper are roughly the following:
\begin{enumerate}
 \item Theorems \ref{ThmPerBasis} and \ref{ThmIntrLoc}: a Rips-critical point $c$ of persistence corresponds to an isometrically embedded circle(s) of length $3c$, which arises from the boundaries of critical triangles;
 \item Theorem \ref{PropFin} and Corollary \ref{CorFinCritValue}: $0$ is the only possible accumulation point of the set of critical points, with the later being finite for locally contractible spaces;
 \item Theorems \ref{ThmPerSpa}, \ref{ThmPerCirc}, and \ref{ThmPerDiam}: persistence measures precisely the 'size' of holes measured by the length (equivalently the diameter or the radius of the smallest enclosing disc) of the corresponding embedded circle;
 \item Remark \ref{RemTame} and Theorem \ref{ThmLexicograph}: persistence via homology with coefficients in a field $\FF$ is q-tame \cite{Cha1} and in a locally contractible case encodes precisely the lengths of lexicographically smallest generating set of $H_1(X,\FF)$, see Figure \ref{FigExample};
 \item Theorem \ref{ThmRipsCech}: persistences via Rips and \v Cech filtrations are isomorphic up to a factor $3/4$.
\end{enumerate}

Besides these results, we obtain a wealth of other results on persistence, topology, and geometry of $X$, including five equivalent formulation of persistence via open Rips complexes. Using the obtained relationship to closed and \v Cech filtrations we may consequently express persistence in 12 different ways! Below is a sample result using notation introduced in Section \ref{SectPre}:

\begin{Theorem}
 Suppose $X$ is a compact Riemannian manifold, $\FF$ is a field and $l_1, l_2, \ldots, l_n$ are the lengths of the shortest generating set of representatives of $H_1(X, \FF)$. Then the following persistences are isomorphic:
\begin{enumerate}
 \item $\big\{H_1(\Rips(X,4r), \FF) \}_{r>0}$
 \item $\big\{H_1(\Cech(X,3r),\FF)\big\}_{r>0}$
 \item $\{H_1(X, \FF)/\S(X, 6r)\}_{r>0}$
  \item $\big\{H_1(X, \FF)/\L(X, 12r)\big\}_{r>0}$
   \item $\{H_1(X, \FF)/\D(X, 6r)\}_{r>0}$
   \item $\bigoplus_{i=1}^n (\II_{(0, 12 l_i]})_r$, i.e., the sum of interval  modules for all intervals $(0, 12 l_i]$,
   \end{enumerate}
   where $\S(X,a), \L(X, a), \D(X,a)$ are subspaces of $H_1(X, \FF)$ generated by loops size less than $a$, with the size being measured as the radius of smallest enclosing ball, length and the diameter of the loop correspondingly.
\end{Theorem}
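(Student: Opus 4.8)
The plan is to obtain this sample theorem by assembling the paper's main results and keeping careful track of the reparametrizations of the scale so that all six families coincide on the nose. First I would record the relevant structural features of $X$: a compact Riemannian manifold is a compact geodesic space by Hopf--Rinow, it is locally contractible (being a topological manifold), and $H_1(X,\FF)$ is finite dimensional since $X$ has the homotopy type of a finite CW complex. The last point guarantees that $n<\infty$ and that the sum in (6) is finite, while the first two put every hypothesis of Theorems \ref{ThmPerBasis}, \ref{ThmIntrLoc}, \ref{PropFin}, \ref{ThmPerSpa}, \ref{ThmPerCirc}, \ref{ThmPerDiam}, \ref{ThmRipsCech} and \ref{ThmLexicograph} in force.

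Next I would prove the chain $(1)\cong(3)\cong(4)\cong(5)$. By Definition \ref{DefRLoop} and Proposition \ref{PropRips} the Rips projection induces a natural surjection $H_1(X,\FF)\twoheadrightarrow H_1(\Rips(X,t),\FF)$, compatible with the inclusions $\Rips(X,t)\hookrightarrow\Rips(X,t')$. The content of Theorems \ref{ThmPerSpa}, \ref{ThmPerCirc} and \ref{ThmPerDiam} is that the kernel of this surjection is, respectively, $\S(X,\cdot)$, $\L(X,\cdot)$ and $\D(X,\cdot)$ after the indicated rescaling of $t$; the fact that the three rescalings differ only by the constants appearing as $6r$, $12r$, $6r$ is forced by Theorems \ref{ThmPerBasis} and \ref{ThmIntrLoc}, which identify critical $1$-cycles with isometrically embedded circles, for which the length is twice the diameter and twice the radius of the smallest enclosing ball. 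Since all the maps in sight are induced by the same system of Rips projections and inclusions, these are isomorphisms of persistence modules, not merely objectwise isomorphisms, and $(1)\cong(3)\cong(4)\cong(5)$ follows.

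Then I would bring in $(2)$ and $(6)$. For $(2)$, Theorem \ref{ThmRipsCech} gives an isomorphism between the Rips and \v Cech persistences of $X$ after rescaling the parameter by $3/4$; since $(3/4)\cdot 4r=3r$ this says exactly $\{H_1(\Rips(X,4r),\FF)\}_{r>0}\cong\{H_1(\Cech(X,3r),\FF)\}_{r>0}$. For $(6)$, Remark \ref{RemTame} gives that $(1)$ is q-tame and Corollary \ref{CorFinCritValue} gives that it has only finitely many critical values, each born at $0$; combining these with the decomposition theory for q-tame modules yields a finite direct sum $(1)\cong\bigoplus_i (\II_{(0,d_i]})_r$. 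Theorem \ref{ThmLexicograph} then identifies the death scales $d_i$ with the rescaled lengths of the lexicographically smallest generating set of $H_1(X,\FF)$, and the greedy/matroid argument preceding that theorem shows this multiset of lengths equals $\{l_1,\dots,l_n\}$, the lengths of a shortest generating set of representatives; with the rescaling already fixed this is precisely $\bigoplus_{i=1}^n(\II_{(0,12 l_i]})_r$, closing the cycle.

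The main obstacle is not this assembly, which amounts to bookkeeping of the three constants $3$ (length of a critical circle versus its Rips critical value), $2$ (length versus diameter and versus enclosing radius of a circle) and $3/4$ (Rips versus \v Cech), but rather the inputs it quotes: the exact identification in Theorems \ref{ThmPerCirc}, \ref{ThmPerBasis} and \ref{ThmIntrLoc} of the kernel of the Rips projection with the span of geometrically small loops and of critical cycles with isometrically embedded circles, together with the q-tameness and finiteness statements. Granting those, the only point that still needs genuine care here is checking that the six families of maps all descend from one compatible system, so that the resulting isomorphisms are isomorphisms of persistence modules rather than isomorphisms at each scale separately.
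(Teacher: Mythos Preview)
Your approach is correct and is exactly what the paper intends: this theorem is presented in the Introduction as a ``sample result'' without its own proof, to be obtained by combining Theorems \ref{ThmPerSpa}, \ref{ThmPerCirc}, \ref{ThmPerDiam}, \ref{ThmRipsCech} and \ref{ThmLexicograph} with the appropriate reparametrizations, precisely as you do.

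Two small comments. First, you take an unnecessary detour when you say the constants $6r$, $12r$, $6r$ are ``forced by Theorems \ref{ThmPerBasis} and \ref{ThmIntrLoc}'': these constants are already explicit in the correspondence theorems themselves (Theorem \ref{ThmPerSpa} gives $3r/2$, Theorem \ref{ThmPerCirc} gives $3r$, Theorem \ref{ThmPerDiam} gives $3r/2$, and substituting $r\mapsto 4r$ yields $6r$, $12r$, $6r$ directly). Second, your final rescaling for item (6) deserves a closer look: Theorem \ref{ThmLexicograph} gives death times $l_i/3$ in the native Rips parameter, so after $r\mapsto 4r$ the intervals become $(0,l_i/12]$, not $(0,12l_i]$. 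You have reproduced what appears to be a typographical slip in the stated theorem rather than the value the argument actually produces; the assembly itself is fine.
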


Intrinsic $1$-dimensional persistence was, to the best of our knowledge, only determined in two cases: for metric graphs via open \v Cech filtration using homology with coefficients in a field \cite{7A}; and for $S^1$ in \cite{AA}, where the authors actually provide a complete description  in all dimensions. The results of the later paper  complement the theory of this paper: on one hand its 1-dimensional case coincides with the easiest non-trivial example for our theory; on the other hand it may be combined with insight (1) above (more about this idea in Section \ref{SectFW}) to extract lengths of some geodesics from higher-dimensional persistent homology and give a computational framework for the critical points of the fundamental group persistence. 
An application of our theory to metric graphs also provides a generalization of the main result of \cite{7A}. As for computational implementation, the intrinsic distace was approximated in \cite{Dey} although the motivation for doing so was different. \cite{ZV1} is a successor to this paper described in Section \ref{SectFW}. 

The structure of the paper is the following. In Section \ref{SectRipsCxes} we set up the interaction between the discrete setting of Rips complexes and the continuous setting of a geodesic space via samples and fillings. We then prove basic geometric statements for this interaction and Rips complexes. In Section \ref{SecSizeHoles} we describe sizes of holes detected by this interaction. These are used in the correspondence theorems of Section \ref{SectFundGroup}, which relate the fundamental groups of Rips complexes to the fundamental groups of the underlying space modulo 'small' holes. Section \ref{SectLimit} provides the limiting arguments which allow us to obtain interesting loops in the limit. Results of all these sections are combined in Section \ref{SectStruc}, where we show that critical values are generated by geodesic circles. Section \ref{SectHStruc} provides homological versions of results of  Section \ref{SectStruc} and the connection between PD and the shortest generating set.
In Section \ref{SectExample} we give a short illustrative example. In Section \ref{SectCech} we prove a connection to \v Cech complexes and in Section \ref{SectClosed} we consider other generalizations.  Section \ref{SectFW} outlines future work.

\section{Preliminaries}
\label{SectPre}
In this section we introduce the concepts and notions used later. For details and undefined notions of Algebraic Topology and TDA see \cite{Hat} and \cite{EH} respectively.

Suppose $(X,d)$ is a metric space. For $A\subset X$ we define diameter as $\diam(A)=\sup_{x,y\in A}d(x,y)$. 
For $x\in  X$ and $r\geq 0$, $B(x,r)$ represents the open ball and $\cB(x,r)$ represents the closed ball.
We will focus on \textbf{geodesic}  spaces, i.e., on metric spaces $(X,d)$ satisfying the following condition: for each $x,y\in X$ there exists a path, called a \textbf{geodesic}, from $x$ to $y$ of length $d(x,y)$. Equivalently, $X$ is geodesic if for each $x,y\in X$ there exists an isometric embedding of $[0, d(x,y)]$ into $X$ with $0\mapsto x$ and $ d(x,y)\mapsto y$. All our spaces will be geodesic, unless explicitly stated otherwise. A geodesic between two points may not be unique. The class of geodesic metric spaces includes all Riemannian manifolds. A \textbf{geodesic circle} in a metric space $X$ is an isometrically embedded circle (equipped with the geodesic metric) of positive circumference. When necessary we will consider $\bullet\in X$ to be the basepoint of $X$. Loops in a space will be considered either as a map to $X$ or a subset of $X$, depending on the context. Given a loop $\alpha$ in $X$, the induced homotopy class (if $\alpha$ is based we consider based homotopy class) is referred to as $[\alpha]_{\pi_1}$ and the induced cycle in $H_1(X,G)$ is referred to as $[\alpha]_G$.  A loop in $X$ (based or unbased) is \textbf{essential} if it is not contractible. 
The concatenation of loops or paths $\alpha$ and $\beta$ is denoted by $\alpha * \beta$ (in the later case we naturally require that the endpoint of $\alpha$ is the initial point of $\beta$). Given a path $\alpha \colon [0,a]\to X$ we define the inverse path $\alpha^-\colon [0,a]\to X$  by $\alpha^-(t)=\alpha(a-t)$. 

\begin{Def}\label{DefComplexes}
Given  $r>0$ we define various complexes with vertex set $
X$. The condition besides the name determines when a finite subset $\s\subset X$ belongs to the complex.
\begin{enumerate}
 \item \textbf{(Open) Rips} (or Vietoris-Rips) \textbf{complex} $\Rips(X,r): Diam(\s) < r$.
 \item \textbf{Closed Rips complex} $\cRips(X,r): Diam(\s) \leq r$.
 \item \textbf{(Open) \v Cech complex} $\Cech(X,r): \cap_{z\in\s} B(z,r) \neq \emptyset$.
  \item \textbf{Closed \v Cech complex} $\cCech(X,r): \cap_{z\in\s} \cB(z,r) \neq \emptyset$.
\end{enumerate}
\end{Def}

We will call $1$-dimensional simplices edges, and $2$-dimensional simplices triangles. For a simplex $\sigma$ in a \v Cech complex we refer to any point $w$ of $\cap_{z\in\s} B(z,r)$ as a witness of $\sigma$, or say that $w$ witnesses $\sigma$.

\begin{Def}\label{DefFiltration}
 Let $\C$ denote any of the complexes mentioned in Definition \ref{DefComplexes}. The \textbf{induced filtration} of $X$ is the collection $\{\C(X,r)\}_{r>0; r\in\RR}$ along with naturally induced simplicial \textbf{bonding} maps $i_{p,q}\colon \C(X,p)\to \C(X,q)$ for all $p<q$, which are the identity on vertices. Considering this induced filtration as a category, we will apply various functors $\P$ to obtain (induced) $\P$-filtration of $X$. We will refer to this filtration as \textbf{persistence}. 
 For example, the $\pi_1$-persistence of $(X, \bullet)$ via open Rips complexes refers to a collection $\{\pi_1(\Rips(X,r), \bullet)\}_{r>0}$ along with homomorphisms $\pi_1(i_{p,q})\colon \pi_1(\Rips(X,p),\bullet)\to \pi_1(\Rips(X,q), \bullet)$ for all $p<q$, which are induced by the natural inclusions. We will usually omit the word 'induced'. If $\FF$ is a field then $H_1(\_, \FF)$-persistence consists of vector spaces and is usually referred to as a persistence module. An \textbf{isomorphism} $f$ between persistences $\{A_r\}_{r>0}$ and $\{B_r\}_{r>0}$ is a collection of isomorphisms $f_r \colon A_r \to B_r$ which commutes with the corresponding bonding maps. The isomorphism is denoted by $\{A_r\}_{r>0}\cong \{B_r\}_{r>0}$. 
 \end{Def}
 
Suppose  that $\FF$ is a field and that for all $r>0$,  vector spaces $H_1(\Rips(X, r), \FF)$ are finitely generated. In this case the corresponding  persistence module decomposes as a sum of elementary intervals  $\II_{\langle b,d \rangle}$, each of which is  of the form
$$
(\II_{\langle b,d \rangle})_r = 
\begin{cases}
 \FF, \quad r\in \langle b,d \rangle \\
 0, \quad r\notin \langle b,d \rangle,
\end{cases}
$$ 
with bonding maps being surjective (identities or trivial) and with $ \langle b,d \rangle$ denoting an interval (with either endpoint being open or closed).
 A \textbf{persistence diagram} $PD(X,\FF)$ is a set consisting of points $(b,d)\in \RR^2, (b<d)$, which appear as the endpoints of the above mentioned intervals ($b$ is referred to as the birth, and $d$ as the death of an interval). If we want to indicate whether an interval is open or closed at a specific endpoint we may use decorated points as in \cite{Cha1}. To each point we attach a degree indicating the number of intervals with the corresponding endpoints.

Given $\delta>0$, a $\delta$-\textbf{interleaving} between persistences $\{A_r\}_{r>0}$ and $\{B_r\}_{r>0}$ is a collection of homomorphisms $f_r \colon A_r \to B_{r+\delta}$ and $g_r \colon B_r \to A_{r+\delta}$ which commutes with each other and with the corresponding bonding maps.

Due to technical, interpretational, and computational difficulties, persistent homotopy groups (mentioned for example in \cite{Le}) never really gained a traction comparable to that of persistent homology.  
One of the main goals of this paper is to provide a very precise interpretation of these invariants in terms of closed geodesics, which should enhance the appreciation of intrinsic persistence,   provide theoretical background, and hopefully  lead to a  computational implementation. In the forthcoming paper we will apply this understanding to higher dimensional persistence and show that a lot of information (i.e.,  critical points, see Definition \ref{DefCritical}) about $\pi_1$-persistence can be extracted from higher dimensional homology persistence.

\begin{Def}\label{DefCritical}
Given a $\P$-filtration $\{C_r\}_{r>0}$, $p>0$ is a:
\begin{enumerate}
 \item \textbf{left critical value}, if for all small enough $\eps>0$ the map $i_{p-\eps,p}$ is not an isomorphism (in the category determined by $\P$);
  \item \textbf{right critical value}, if for all  $\eps>0$, the map $i_{p,p+\eps}$ is not an isomorphism;
\end{enumerate}
We call $p>0$ a \textbf{critical value}, if it is either of the above.
\end{Def}

We will later show that when considering intrinsic metric of compact geodesic spaces, open filtrations (that is, filtrations by open complexes mentioned in Definition \ref{DefComplexes}) admit only right critical values for $\pi_1$ and $H_1$ persistence (Corollary \ref{CorDirCrit}), and closed filtrations admit only left critical values for $\pi_1$ and $H_1$ persistence (\textbf{c} of Section \ref{SectClosed}). The later statement is not true for higher-dimensional invariants, which can be observed from the classification of the closed filtrations of the circle as proved in \cite{AA}.

\begin{Def}
 \label{DefSLSC}
A path-connected space $X$ is \textbf{semi-locally simply-connected} (SLSC) if for every point $x\in X$ there exists a neighborhood $U$ of $x$ for which the image of the inclusion induced map $\pi_1(U,x)\to \pi_1(X,x)$ is trivial.

Given an Abelian group $G$, a path-connected space $X$ is $G$-\textbf{semi-locally simply-connected} ($G$-SLSC) if for every point $x\in X$ there exists a neighborhood $U$ of $x$ for which the image of the inclusion induced map $H_1(U,G)\to H_1(X,G)$ is trivial. Property $G$-SLSC is weaker than SLSC.
\end{Def}

The property of being SLSC essentially means that locally, there are no essential loops at any point. Putting it differently, every point has a small enough neighborhood which contains no essential loop in $X$. SLSC spaces include all locally contractible spaces such as manifolds, complexes, etc. A standard example of a non-SLSC space is the Hawaiian Earring $HE$:  a planar union of circles of radius $1/n$ for $n\in \NN$, sharing a common point.

\begin{Example}\label{ExampleBasic}
The simplest non-trivial example of a non-contractible geodesic space is probably a circle $C$. For simplicity we assume it is of circumference $1$. Its $1$-dimensional persistence is a good example on which to build the intuition for the results to come. The complete persistence of $C$ was computed in \cite{AA}.

It turns out that for $r \leq 1/3$,  $\Rips(C,r)$ is homotopy equivalent to a circle, with bonding maps being homotopy equivalences. For $r> 1/3$, the complex $\Rips(C,r)$ is simply connected (in fact, it is homotopy equivalent to various wedges of higher dimensional spheres  or contractible \cite{AA}). Thus $1/3$ is a right critical value. Intuitively we can think of the critical value $1/3$ as the infimum of values $r$, at which $\Rips(C,r)$ contains a triangle whose convex hull contains the center of $C$, if we imagine $C$ embedded in the plane in the usual way. For each $r>1/3$ any three equidistant points on $C$ provide such configuration for $\Rips(C,r)$, as depicted in Figure \ref{FigAA}. In the case of $\cRips(C,r)$ the value $1/3$ is a left critical value and three equidistant points on Figure \ref{FigAA} induce a simplex in $\cRips(C,1/3)$ required to collapse all loops.

A similar intuition works for the \v Cech complex. It turns out that for $r \leq 1/4$,  $\Cech(C,r)$ is homotopy equivalent to a circle, with bonding maps being homotopy equivalences. For $r> 1/4$, the complex $\Cech(C,r)$ is simply connected (in fact, it is again homotopy equivalent to various wedges of higher dimensional spheres  or contractible \cite{AA}). An equivalent treatment also refers to closed \v Cech complexes. A configuration of three points, whose convex hull contains the center of $C$, is in this case provided by two opposite points on $C$ and any of their midpoints, as depicted on Figure \ref{FigAA}. In this case the intersection of closed balls of radius $1/4$ around these points is the mentioned midpoint.
\end{Example}

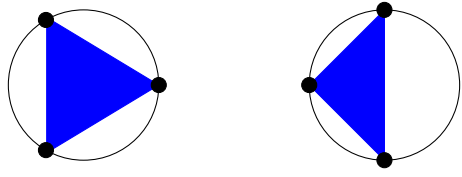
\begin{figure}
\begin{tikzpicture}
\coordinate (A) at (-1,0);
\coordinate (B) at (-2.49,.9);
\coordinate (C) at (-2.49,-.9);
\draw  (-2,0) circle [radius=1];
\draw  (2,0) circle [radius=1];
\draw [blue, fill=blue] (1,0) -- (2,1) -- (2,-1) -- (1,0);
\draw [blue, fill=blue] (A) --(B) -- (C)-- (A);
\draw [fill] (-2,0)+(120:1) circle [radius=0.1];
\draw [fill] (-2,0)+(240:1) circle [radius=0.1];
\draw [fill] (-2,0)+(0:1) circle [radius=0.1];
\draw [fill] (2,-1) circle [radius=0.1];
\draw [fill] (2,1) circle [radius=0.1];
\draw [fill] (1,0) circle [radius=0.1];
\end{tikzpicture}
\caption{The minimal (with respect to the radius parameter of the complex) configuration of three points on a circle forming a simplex in Rips (left) and \v Cech (right) complex, whose convex hull contains the center of the circle.}
\label{FigAA}
\end{figure}

\section{Geometry of Rips complexes}
\label{SectRipsCxes}

In this section we present the foundations for the interaction between the discrete setting modelled by Rips complexes, and the continuous setting of geodesic spaces. The interaction is utilised by fillings and samples of Definition \ref{DefRLoop}, which induce crucial maps in  later sections. 

\begin{Def}
\label{DefRLoop}
 Fixing $r>0$ we define the following notation:
 \begin{enumerate}
 	\item $r$-\textbf{loop} $L$: a simplicial loop in $\Rips(X,r)$ considered as a sequence of points $(x_0, x_1, \ldots, x_k, x_{k+1}=x_0)$ in $X$ with $d(x_i, x_{i+1})<r, \forall i\in \{0,1,\ldots, k\}$.   An $r$-loop is $r$-\textbf{null} if it is contractible in $\Rips(X,r)$;
	\item \textbf{filling} of $L$: any loop in $X$ obtained from $L$ by connecting $x_i$ to $x_{i+1}$ by a geodesic for all $i\in \{0,1,\ldots, k\}$;
	\item $\size(L)=|L|=k+1$;
	\item $\length(L)=\sum_{i=0}^k d(x_i, x_{i+1})$, coincides with the length of any filling of $L$;
	\item $r$-\textbf{sample} of a loop $\alpha \colon [0,a]\to X$: a choice of $0\leq t_0<t_1<\ldots < t_m \leq a$ with $\diam \alpha ([t_{i},t_{i+1}])<r, \forall i\in \{0,1,\ldots, m-1\}$ and $\diam (\alpha([0,t_0] \cup[t_m,a]) <r$. Such choice of $t_i$'s exists by compactness. By an $r$-sample we will usually consider the induced $r$-loop $(\alpha(t_0), \alpha(t_1), \ldots, \alpha(t_m), \alpha(t_{0}))$. If $\alpha$ is based at $\bullet$, we will assume $t_0=0$;
	\item \textbf{Rips critical value} $\Rcrit (\alpha)$ of a loop $\alpha \colon [0,a]\to X$: $$\arginf_r \{r\textit{-sample of }\alpha \textit{ is } r\textit{-null}\},$$ i.e., the infimum of the set of all possible values of $r$ for which an $r$-sample of $\alpha$ is $r-$null. By Proposition \ref{PropRips}, the definition does not depend on the choice of an $r$-sample.
 \end{enumerate}
 
 Two $r$-loops are $r$-\textbf{homotopic}, if they are homotopic in $\Rips(X,r)$. The corresponding simplicial homotopy in $\Rips(X,r)$ is referred to as $r$-\textbf{homotopy}. If the second $r$-loop is constant we also call it $r$-\textbf{nullhomotopy}. Depending on the context we may be considering based or unbased homotopies. The \textbf{concatenation} $L * L'$ of $r$-loops $L$ and $L'$ is defined in the obvious way by joining (concatenating) the defining sequences. Note that a filling of the join is the join of fillings of $r$-loops.
\end{Def}

 Note that if loop $\alpha \colon [0,a]\to X$  in Definition \ref{DefRLoop} is a parameterization by the natural parameter, then $\alpha$ is of finite length. This condition excludes non-rectifiable (i.e., loops of infinite length) loops. Homotopy classes of loops whose each representative is of infinite length naturally appear, for example, in $HE$ by circumventing infinitely many circles in an appropriate fashion. We will later show that in the case of SLSC spaces this phenomenon is absent, i.e., each homotopy class of a loop has a rectifiable representative. 

The following proposition presents basic properties of $r$-loops, which will be used generously throughout the paper.

\begin{Prop}
\label{PropRips}
 Let $X$ be geodesic and fix $0<r<r'$. Then the following hold:
 \begin{enumerate}
 	\item if $L$ is an $r$-loop then it is an $r'$-loop as well;
	\item if $r$-loop $L$ is $r$-null then it is $r'$-null as well;
	\item any $r$-loop of size $3$ is $r$-null;
	\item given a loop $\alpha \colon [0,a]\to X$, any two $r$-samples of $\alpha$ are $r$-homotopic;
	\item any $r$-sample of a loop of length less than $3r$ is $r$-null;
	\item choose loops $\alpha \colon [0,a]\to X$ and $\alpha' \colon [0,a']\to X$ and take any two of their $r$-samples $L$ and $L'$. If $\alpha$ and $\alpha'$ are  homotopic, then  $L$ and $L'$ are $r$-homotopic (the statement holds for both based and unbased versions). If $G$ is an Abelian group and $[\alpha]_G=[\alpha']_G\in H_1(X,G)$ then $[L]_G=[L']_G\in H_1(\Rips(X,r),G)$; 
	\item if a loop $\alpha \colon [0,a]\to X$ is contractible, then any of its $r$-samples is $r$-null;
	\item suppose two $r$-loops, $L$ and $L'$, are given by $(x_0, x_1, \ldots, x_k, x_{k+1}=x_0)$ and $(y_0, y_1, \ldots, y_k, y_{k+1}=y_0)$. If
	$$
	\max_{i\in \{0,1,\ldots, k\}}d(x_i,y_i) < r-\max_{i\in \{0,1,\ldots, k\}} \{d(x_i,x_{i+1}),d(y_i, y_{i+1})\},
	$$
	then $L$ and $L'$ are $r$-homotopic (the statement holds for both based and unbased versions);
	\item maps $\pi_1(i_{r,r'})\colon \pi_1(\Rips(X,r),\bullet)\to \pi_1(\Rips(X,r'), \bullet)$ and their homological counterparts are surjective.
\end{enumerate}
\end{Prop}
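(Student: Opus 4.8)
The plan is to prove the nine items in an order dictated by their dependencies rather than the order stated, the backbone being a single ``fat simplex'' observation. Items (1)--(3) are immediate: (1) holds since $d(x_i,x_{i+1})<r<r'$; (2) because $\Rips(X,r)$ is a subcomplex of $\Rips(X,r')$ with $i_{r,r'}$ the inclusion, so a nullhomotopy in the smaller complex is one in the larger; and (3) because a size-$3$ $r$-loop $(x_0,x_1,x_2,x_0)$ has all three pairwise distances $<r$, hence $\{x_0,x_1,x_2\}$ spans a (possibly degenerate) $2$-simplex of $\Rips(X,r)$ which the loop bounds. I would then prove item (8), the combinatorial core. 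Setting $M=\max_i\{d(x_i,x_{i+1}),d(y_i,y_{i+1})\}$, the hypothesis forces $M<r$ and $\max_i d(x_i,y_i)<r-M$, and the triangle inequality then gives that all six pairwise distances among $x_i,x_{i+1},y_i,y_{i+1}$ are $<r$, so these four points span a simplex $\sigma_i$ of $\Rips(X,r)$. The desired $r$-homotopy is realised as the simplicial map out of the standard triangulation of the cylinder $S^1\times[0,1]$ that sends the two boundary circles to the vertex sequences of $L$ and $L'$ and each of the two triangles of a quadrilateral $[i,i+1]\times[0,1]$ into $\sigma_i$; in the based case one uses instead a triangulation of $[0,k+1]\times[0,1]$ whose left and right edges collapse onto $x_0=y_0=\bullet$, making the homotopy rel basepoint.

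For item (4) I would argue by refinement: inserting a single parameter value $t'\in(t_i,t_{i+1})$ into an $r$-sample produces an $r$-homotopic $r$-loop, because $\alpha(t_i),\alpha(t'),\alpha(t_{i+1})$ all lie in $\alpha([t_i,t_{i+1}])$, a set of diameter $<r$, hence span a triangle of $\Rips(X,r)$, across which one pushes the edge from $\alpha(t_i)$ to $\alpha(t_{i+1})$ (a homotopy rel endpoints, so it also respects the basepoint). Since the union of the parameter sets of two $r$-samples of $\alpha$ is again an $r$-sample (each arc of the union lies inside an arc of one of the two), induction on the number of inserted points shows that any two $r$-samples of $\alpha$ are $r$-homotopic. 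Item (5) then follows: given $\length(\alpha)<3r$, pick parameters $u_0<u_1<u_2$ (with $u_0=0$ in the based case) cutting $\alpha$ into three arcs of length $<r$; the size-$3$ $r$-loop $(\alpha(u_0),\alpha(u_1),\alpha(u_2),\alpha(u_0))$ is a valid $r$-sample of $\alpha$, it is $r$-null by (3), and the given $r$-sample is $r$-homotopic to it by (4).

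For the homotopy half of item (6), reparametrise $\alpha,\alpha'$ to $[0,1]$, let $H\colon[0,1]^2\to X$ be the (based or free) homotopy between them, and use uniform continuity to choose $N$ so that every $(1/N)$-grid square has image of diameter $<r/3$. The ``rows'' $L_j=(H(0,j/N),H(1/N,j/N),\ldots,H(1,j/N))$ are $r$-samples of the intermediate loops, consecutive rows $L_j,L_{j+1}$ satisfy the hypothesis of (8) (the relevant distances are all $<r/3<r-r/3$), so $L_0\simeq_r\cdots\simeq_r L_N$; combining with (4) to replace $L_0,L_N$ by the given samples $L,L'$ completes this case, and item (7) is (6) in the case where $\alpha'$ is a constant loop. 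The homology half of (6) needs a chain-level comparison: from $[\alpha]_G=[\alpha']_G$ one obtains a singular $2$-chain $c$ whose boundary is the difference of the $1$-cycles induced by $\alpha$ and $\alpha'$, applies the barycentric subdivision operator $\mathrm{Sd}$ enough times that every simplex involved has image of diameter $<r$ --- using that the ``small'' singular chains (those built from singular simplices of diameter $<r$) compute $H_*(X;G)$, see \cite{Hat} --- and then pushes everything forward by the ``vertex'' chain map $\rho$ sending a small singular simplex $\tau$ to the Rips-simplex spanned by $\{\tau(v_0),\ldots,\tau(v_p)\}$. One checks $\rho$ is a chain map, observes that $\rho$ applied to the subdivided cycle of $\alpha$ is the induced cycle of a dyadic $r$-sample of $\alpha$, and invokes (4) to pass to arbitrary $r$-samples, yielding $[L]_G=[L']_G$.

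Finally, item (9): given a $q$-loop $L=(z_0,\ldots,z_k,z_0)$, replace each edge $z_iz_{i+1}$ by a $p$-fine sampling of a geodesic joining its endpoints; the sampling points of a given edge all lie on a geodesic of length $<q$, hence are pairwise at distance $<q$ and span a simplex of $\Rips(X,q)$, across which the sampled path is $q$-homotopic rel endpoints to the edge. The resulting $p$-loop $L'$ then has $i_{p,q}$-image $q$-homotopic to $L$, so $[L]_{\pi_1}\in\im\pi_1(i_{p,q})$; the same edge-by-edge replacement turns a $G$-$1$-cycle of $\Rips(X,q)$ into a homologous $1$-cycle supported in $\Rips(X,p)$ (the correction being the boundary of a $2$-chain inside the acyclic simplices just produced), giving surjectivity on $H_1$ as well. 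I expect the homology half of (6) to be the main obstacle --- it is the only place requiring a careful passage between singular chains on $X$ and simplicial chains on $\Rips(X,r)$ (smallness, the subdivision chain homotopy, and the fact that small chains compute singular homology) --- whereas every other item reduces to the elementary fat-simplex observation of (8) together with bookkeeping about samples and the based/unbased distinction.
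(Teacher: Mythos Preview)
Your proof is correct and follows essentially the same approach as the paper. The only differences are organizational: you establish (8) first and use it via a grid-of-rows argument for the homotopy half of (6), whereas the paper triangulates the homotopy $H\colon S^1\times I\to X$ directly and reads off the $r$-homotopy from the triangulation; and for (9) you argue edge-by-edge across the simplex spanned by a geodesic's sample points, whereas the paper simply takes a $p$-sample of a filling of $L$ and invokes (6)---but the underlying ideas (triangle insertion for (4), fat quadrilaterals for (8), fine subdivision for (6)) are identical, and your homology argument for (6) is in fact a more explicit version of the paper's sketch.
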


\begin{proof}
 (1) and (2) and (3) are trivial. In order to prove (4) consider two $r$-samples $0 \leq t_0<t_1<\ldots < t_m \leq a$ and $0 \leq t'_0<t'_1<\ldots < t'_{m'} \leq a$. We claim that both are $r$-homotopic to the $r$-sample determined by the ordering of the collection $\{t_i\}_{i=0}^m \cup \{t'_i\}_{i=0}^{m'}$ (the later is obviously an $r$-sample). Using the inductive argument it suffices to prove the following claim: given an $r$-sample $L$ determined by $0 \leq s_0<s_1<\ldots < s_{k} \leq a$ and $p\in[0,a]\setminus \{s_i\}_{i=0}^k$, the $r$-sample $L'$ obtained by adding $p$ to the collection  $\{s_i\}_{i=0}^k$ is $r$-homotopic to $L$. In order to prove the claim consider two cases:
\begin{itemize}
	\item  if $p\in (s_i,s_{i+1})$ for some $i$ then the $r$-homotopy required by the claim is given by the triangle $[\alpha(s_i), \alpha(s_{i+1}), \alpha(p)]$ in $\Rips(X,r)$;
	\item else the triangle $[\alpha(s_0), \alpha(s_{k}), \alpha(p)]$ in $\Rips(X,r)$ does the job.
\end{itemize}
 
 (5) By (4) it suffices to provide a proof for any $r$-sample. We consider an $r$-sample consisting of three equidistant points. Such triple obviously forms a triangle in $\Rips(X,r)$ hence is $r$-null.
 
 (6) We will provide a proof for unbased homotopies, the based version is essentially the same. Using (4) again we will only provide $r$-homotopy between two specific $r$-samples. Consider a homotopy $H\colon S^1 \times I \to X$ between loops $\alpha=H|_{S^1\times \{0\}}$ and $\alpha'=H|_{S^1\times \{1\}}$, where we reparameterize the loops so that their domain is $S^1$. Choose a triangulation $\Delta$ of $S^1\times I$ so that the image of each $2$-simplex by $H$ is of diameter less than $r$: we can do that by taking a triangulation  subordinate to the open cover $\big\{H^{-1}(B(x,r/2))\big\}_{x\in X}$. This triangulation induces the required $r$-homotopy $H' \colon (S^1 \times I, \Delta)\to \Rips(X,r)$: each $2$-simplex determined by points $(a_1,t_1),(a_2,t_2),(a_3, t_3)$ in $S^1\times I$ is linearly mapped to the $2$-simplex $[H(a_1,t_1), H(a_2,t_2), H(a_3, t_3)]$ in $\Rips(X,r)$. 
 
The homological version goes along the same lines. Suppose $[\alpha]_g-[\alpha']_G = \partial \sum_{i=1}^k \lambda_i T_i$, where
 $\lambda_i\in G$ and $T_i$ is a singular $2$-simplex in $X$ for each $i$. As before, a fine enough triangulation of each $T_i$ (formally we take triangulations of the domains of $T_i$, which are viewed as maps from the standard $2$-simplex into $X$) decomposes $T_i$ into a sum of $2$-simplices, each of which is contained in a ball of radius $r/2$. Adding these simplices multiplied with $\lambda_ i$ and summing over all $i$ we prove the claim.
  (7) follows from (6). 
 
 (8) The proof is the same for the based and unbased version. The $r$-homotopy is provided by the obvious arrangement of $2$-simplices $[x_i,x_{i+1},y_i]$ and $[y_i,y_{i+1},x_{i+1}]$ as depicted on Figure \ref{FigOpenRips}, where index $i\in \{0,1,\ldots,k\}$ is considered $mod(k+1)$. The condition of the statement guarantees that the mentioned $2$-simplices are all in $\Rips(X,r)$.
 
 (9) Take a $q$-loop $L$. By (6) any $p$-sample $L'$ of a filling of $L$ is $q$-homotopic to $L$. Hence $[L']_{\pi_1}\in   \pi_1(\Rips(X,p),\bullet)$ is the preimage of  $[L]_{\pi_1}\in \pi_1(\Rips(X,q), \bullet)$ by $i_{p,q}$. 
 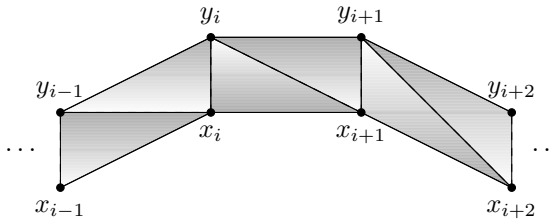
\begin{figure}
\begin{tikzpicture}
\draw[top color=gray,bottom color=white, fill opacity =.5] (-2,1) -- (0,2) --(0,1) -- cycle; 
\draw[top color=gray,bottom color=white, fill opacity =.5] (-2,1) -- (-2,0) --(0,1) -- cycle; 
\draw[bottom color=gray,top color=white, fill opacity =.5] (2,1) -- (0,2) --(0,1) -- cycle; 
\draw[top color=gray,bottom color=white, fill opacity =.5] (2,1) -- (0,2) --(2,2) -- cycle; 
\draw[bottom color=gray,top color=white, fill opacity =.5] (2,1) -- (4,0) --(2,2) -- cycle; 
\draw[top color=gray,bottom color=white, fill opacity =.5] (4,1) -- (4,0) --(2,2) -- cycle; 
\node [circle,  scale=.3, fill=black,draw, label=above:$y_{i-1}$] (a1) at (-2,1) {};
\node [circle,  scale=.3, fill=black,draw, label=above:$y_{i}$] (a2) at (0,2) {};
\node [circle,  scale=.3, fill=black,draw, label=above:$y_{i+1}$] (a3) at (2,2) {};
\node [circle,  scale=.3, fill=black,draw, label=above:$y_{i+2}$] (a4) at (4,1) {};
\draw (a1)--(a2)--(a3)--(a4);
\node [circle,  scale=.3, fill=black,draw, label=below:$x_{i-1}$] (b1) at (-2,0) {};
\node [circle,  scale=.3, fill=black,draw, label=below:$x_{i}$] (b2) at (0,1) {};
\node [circle,  scale=.3, fill=black,draw, label=below:$x_{i+1}$] (b3) at (2,1) {};
\node [circle,  scale=.3, fill=black,draw, label=below:$x_{i+2}$] (b4) at (4,0) {};
\draw (b1)--(b2)--(b3)--(b4);
\draw [dashed] (a1)--(b1);
\draw [dashed] (a2)--(b2);
\draw [dashed] (a3)--(b3);
\draw [dashed] (a4)--(b4);
\draw [dashed] (a1)--(b2);
\draw [dashed] (a2)--(b3);
\draw [dashed] (a3)--(b4);
\node (a5) at (-2.5,.5) {$\dots$};
\node (a6) at (4.5,.5) {$\dots$};
\end{tikzpicture}
\caption{An excerpt of the $r$-homotopy from (8) of Proposition \ref{PropRips}.}
\label{FigOpenRips}
\end{figure}
\end{proof}

\begin{Remark}
 Property (8) of Proposition \ref{PropRips} is a kind of openness condition for homotopy classes of $r$-loops of size $k+1$ in the following sense. It states that for a given $r$-loop $L$ of size $k+1$, all $r$-loops of the same size, which are pointwise close enough to $L$, are in fact $r$-homotopic to $L$. A related idea was used in Proposition 5.12 of \cite{Cha2}. Along with converging $r$-loops of the same size and naturally obtained limiting $r$-loops in a compact geodesic space $X$, we see that a converging sequence of $r$-loops (of constant size) stabilizes after finitely many steps in its $r$-homotopy type. We will use this intuitive idea several times throughout the paper, especially in Lemma \ref{LemmaConvergence}.
 
On a related note,  Property (9) of Proposition \ref{PropRips} is a variant of Corollary 6.2 of \cite{Cha2}.
\end{Remark}

\section{The size of holes}
\label{SecSizeHoles}

A one-dimensional hole in a space is usually considered to be given by a non-contractible loop. An intuitive goal of persistence is to measure the size of these holes, which is the main aim of this section. To that end we should be able to measure the size of loops. We will  be interested in subgroups generated by loops of size bounded by $r$. We introduce three ways of doing that. The first one uses the length of a path.

\begin{Def}\label{DefL} Let $r>0$.
 Given a rectifiable loop $\alpha$ in $X$, its \textbf{length} is denoted by $\length(\alpha)$. For any abelian group $G$ the subgroup $\L(X,r,G)\leq H_1(X,G)$ is generated by loops (cycles) of length less than $r$.
 
For the case of the fundamental group we have to proceed using based loops. We therefore utilise the lasso construction (as for example in \cite{Dyd}).  An $l$-\textbf{lasso} is a based loop of the form $\alpha * \beta * \alpha^-$, where $\alpha$ is a path \new{of finite length} starting at $\bullet$, and $\beta$ is a loop of length $l$ based at the endpoint of $\alpha$. A lasso is \textbf{geodesic} if $\alpha$ is a geodesic and $\beta$ is a geodesic circle. $\L(X,r,\pi_1) \leq \pi_1(X,\bullet)$ is generated by all $l$-lassos with $l<r$. \new{Each $l$-lasso is of finite length.}

Define also $\L(X,fin,\pi_1)=\cup_{n\in \NN}\L(X,n,\pi_1)$, $\L(X,fin,G)=\cup_{n\in \NN}\L(X,n,G)$, $\L(X,\pi_1)=\cap_{r>0}\L(X,r,\pi_1)$, and $\L(X,G)=\cap_{r>0}\L(X,r,G)$. \new{Observe that each element of these groups has a representative, of finite length. On the other hand, if a based loop $\alpha$ in $X$ is of finite length, then $[\alpha] \in \L(X,fin,\pi_1)$. }
\end{Def}

While intuitive, Definition \ref{DefL} has some drawbacks. As mentioned before (for the case of $HE$), not all homotopy classes of loops in a geodesic space have a representative of finite length. Therefore we introduce the radius of the smallest enclosing sphere as another measure of the size of a loop, which is finite for all loops and has some historical background. 

\begin{Def}\label{DefUr}
 Let $r>0$.  For any abelian group $G$ the subgroup $\S(X,r,G)\leq H_1(X,G)$ is generated by all loops, whose smallest closed enclosing ball is of radius less than $r$.
 
An $U_d$-\textbf{lasso} is a based loop of the form $\alpha * \beta * \alpha^-$, where $\alpha$ is a path starting at $\bullet$ and $\beta$ is a loop in some $\cB(x,d)$ based at the endpoint of $\alpha$. \new{An $U_d$-lasso could be of infinite length.} $\S(X,r,\pi_1) \leq \pi_1(X, \bullet)$ is generated by all $U_d$-lassos with $d<r$. 
Group $\S(X,\pi_1)=\bigcap_{r>0}\S(X,r,\pi_1) \leq \pi_1(X,\bullet)$ will be referred to as the \textbf{Spanier} group of $X$ following \cite{F}. In a similar fashion, $\S(X,G)=\bigcap_{r>0}\S(X,r,G) \leq H_1(X,G)$ will be referred to as the $H_1(\_,G)$-\textbf{Spanier} group of $X$.
\end{Def}

The third measure of a size of a loop is its diameter. Formally we introduce it here for completeness. The results containing it will be provided in Section \ref{SectFundGroup} using the interplay of the first two measures of a size of a loop, which we develop in this section. 

\begin{Def}\label{DefDr}
 Let $r>0$. Given a loop $\alpha$ in $X$ recall its  diameter $\diam(\alpha)=\max_{x,y\in \alpha}d(x,y)$.  $\D(X,r,\pi_1) \leq$ is generated by all based lassos $\alpha*\beta*\alpha^-$ with $\diam(\beta)<r$. For any abelian group $G$ the subgroup $\D(X,r,G)\leq H_1(X,G)$ is generated by loops (cycles) of diameter less than $r$.
 \end{Def}

Note that all groups defined above are monotonic with respect to $r$, i.e., increasing $r$ increases any of the group above (or keeps it the same). 

\begin{Proposition}\label{PropIneq}
 For any geodesic space $X, r>0$ and Abelian group $G$, the following hold:
 \begin{itemize}
 	\item $\S(X,r,\pi_1) \geq \L(X,2r,\pi_1)$;
	\item $\S(X,\pi_1) \geq \L(X,\pi_1)$;
 	\item $\S(X,r,G) \geq \L(X,2r,G)$;
	\item $\S(X,G) \geq \L(X,G)$.
\end{itemize} 
\new{For any geodesic space $X$ and  $r>0$:
\begin{itemize}
\item groups $\S(X,r,\pi_1)$ and $ \S(X,\pi_1)$ are normal subgroups of $\pi_1(X)$;
\item  groups $\L(X,2r,\pi_1)$ and $\L(X,\pi_1)$ are normal subgroups of $\L(X,fin,\pi_1)$.
\end{itemize}}
 
\end{Proposition}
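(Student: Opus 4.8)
The plan is to reduce the whole statement to one elementary estimate: if $\beta$ is a rectifiable loop of length $l$ based at a point $p$, then $\beta\subseteq\cB(p,l/2)$. Indeed, any point $q$ on $\beta$ splits $\beta$ into two subarcs whose lengths sum to $l$, so one of them has length at most $l/2$, and a path of length $\le l/2$ from $p$ to $q$ forces $d(p,q)\le l/2$.

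Granting this, I would argue the first inclusion as follows. Let $\alpha*\beta*\alpha^-$ be an $l$-lasso with $l=\length(\beta)<2r$, i.e.\ one of the generators of $\L(X,2r,\pi_1)$ in the sense of Definition \ref{DefL}. Writing $p$ for the endpoint of $\alpha$, which is the basepoint of $\beta$, the estimate gives $\beta\subseteq\cB(p,l/2)$ with $l/2<r$; hence $\alpha*\beta*\alpha^-$ is at the same time a $U_{l/2}$-lasso with $l/2<r$, i.e.\ a generator of $\S(X,r,\pi_1)$ in the sense of Definition \ref{DefUr}. Since $\S(X,r,\pi_1)$ is a subgroup containing every generator of $\L(X,2r,\pi_1)$, we get $\S(X,r,\pi_1)\supseteq\L(X,2r,\pi_1)$. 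The homological inclusion $\S(X,r,G)\supseteq\L(X,2r,G)$ is word for word the same: a loop $\gamma$ of length $l<2r$ lies in the closed ball $\cB(\gamma(0),l/2)$ of radius $l/2<r$, so the cycle $[\gamma]_G$ is one of the defining generators of $\S(X,r,G)$. The two limit statements $\S(X,\pi_1)\supseteq\L(X,\pi_1)$ and $\S(X,G)\supseteq\L(X,G)$ then follow by intersecting the pointwise inclusions over all scales and unwinding the definitions of the limit groups.

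For normality, the key observation is that conjugating a lasso by a loop based at $\bullet$ yields another lasso with the same loop part $\beta$. Explicitly, if $\gamma$ is a loop at $\bullet$ representing $g\in\pi_1(X,\bullet)$, then
$$
\gamma*(\alpha*\beta*\alpha^-)*\gamma^- \;=\; (\gamma*\alpha)*\beta*(\gamma*\alpha)^-,
$$
and $\gamma*\alpha$ is again a path from $\bullet$ to the basepoint of $\beta$. Hence $g\cdot[\alpha*\beta*\alpha^-]\cdot g^{-1}$ is represented by a lasso with the identical $\beta$, so it is an $l$-lasso whenever the original is one and a $U_d$-lasso whenever the original is one. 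Thus the generating set of each of $\L(X,2r,\pi_1)$ and $\S(X,r,\pi_1)$ is closed under conjugation by $\pi_1(X,\bullet)$, so both are normal subgroups; and $\L(X,\pi_1)$ and $\S(X,\pi_1)$, being intersections of normal subgroups, are normal as well.

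I do not anticipate a genuine obstacle: the proposition is essentially a matter of unwinding the definitions of Section \ref{SecSizeHoles}. The two spots deserving a moment of care are the length-to-radius passage $l\mapsto l/2$ — which is precisely where the factor $2$ between $\L$ and $\S$ enters — and checking that the displayed path identity really respects the lasso structure; here it helps that the stem $\alpha$ is allowed to be an arbitrary (possibly non-rectifiable) path, whereas $\beta$, having finite length $l$ by definition, is automatically rectifiable, so no difficulty with non-rectifiable representatives arises in the parts of the statement involving $\L$.
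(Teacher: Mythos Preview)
Your proof is correct and follows essentially the same approach as the paper: the paper notes that ``any loop of circumference $l$ is contained in a closed $l/2$ ball around any of its points'' (your splitting-into-subarcs estimate) and leaves the inclusion to the reader, then proves normality via the identical observation $\gamma*(\alpha*\beta*\alpha^-)*\gamma^-\simeq(\gamma*\alpha)*\beta*(\gamma*\alpha)^-$. Your write-up is in fact more careful than the paper's, particularly in spelling out why the limit inclusions follow and why normality passes to the intersections.
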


\begin{proof}
The first part is easy (as any loop of circumference $l$ is contained in a closed $l/2$ ball around any of its points) and left to the reader.
As for the second part it suffices to prove the statement for $\S(X,r,\pi_1)$ and $\L(X,2r,\pi_1)$. In both cases the proof goes along the same lines. Suppose $\gamma$ is a (based) loop \new{of finite length} in $X$ and $\alpha*\beta*\alpha^-$ is an $l$-lasso. Then $\gamma *(\alpha*\beta*\alpha^-)*\gamma^-\simeq (\gamma*\alpha)*\beta*(\gamma*\alpha)^-$ is also an $l$-lasso  thus the groups in question are closed under conjugation. \new{The proof for $U_l$-lassos is analogous, with $\gamma$ being any based loop in $X$.}
\end{proof}

Now that we have various sizes of holes set up, we turn our attention to their role in the interplay between the discrete and continuous setting, formalized by fillings and samples. For the rest of this section we will develop results connecting (discrete) $r$-homotopies to (continuous) homotopies modulo loops of 'small' size.

The following proposition states that, up to holes of length $2r$, the $r$-fillings are unique. We prove it directly as the proof is an instructive model for similar but more elaborate proofs to come, even though it follows from Proposition \ref{PropNull}.

\begin{Proposition}\label{PropDif}
Given $r>0$ and an $r$-loop $L$ in a geodesic space $X$, consider any two of its fillings $\alpha$ and $\beta$. Then $[\alpha]_G - [\beta]_{G}\in \L(X,2r,G)$. If loops are based then $[\alpha * \beta^-]_{\pi_1}\in \L(X,2r,\pi_1)$. 
\end{Proposition}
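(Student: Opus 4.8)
The plan is to realize the difference of two fillings as a concatenation of short loops, one for each edge of the $r$-loop $L=(x_0,x_1,\ldots,x_k,x_{k+1}=x_0)$. Write $\alpha = a_0 * a_1 * \cdots * a_k$ where $a_i$ is the geodesic from $x_i$ to $x_{i+1}$ chosen by the filling $\alpha$, and similarly $\beta = b_0 * b_1 * \cdots * b_k$ with $b_i$ the geodesic chosen by $\beta$. Each $a_i$ and each $b_i$ has length $d(x_i,x_{i+1}) < r$, so the loop $\gamma_i := a_i * b_i^-$ (going from $x_i$ to $x_{i+1}$ along $\alpha$'s geodesic and back along $\beta$'s geodesic) has length less than $2r$.

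For the homology statement: in $H_1(X,G)$ the cycle $[\alpha]_G - [\beta]_G$ equals $\sum_{i=0}^k [\gamma_i]_G$, since all the "seam" vertices $x_i$ cancel in the boundary computation — formally, $\alpha * \beta^-$ is homologous to the sum of the $\gamma_i$ because concatenation corresponds to addition of chains up to the boundaries of the obvious $1$-chains along the geodesics, and each $\gamma_i$ is a loop of length $<2r$, hence represents a class in $\L(X,2r,G)$. So $[\alpha]_G-[\beta]_G\in\L(X,2r,G)$.

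For the $\pi_1$ statement: here I have to be careful about basepoints, so I would insert connecting paths. Let $\alpha|_i = a_0 * \cdots * a_{i-1}$ be the portion of $\alpha$ from the basepoint $x_0=\bullet$ to $x_i$ (a path in $X$), and form the lasso $\ell_i := \alpha|_i * \gamma_i * (\alpha|_i)^-$, which is a based loop of the form (path)$*$(loop of length $<2r$)$*$(path reversed), i.e. a $(2r-\epsilon)$-lasso, so $[\ell_i]_{\pi_1}\in\L(X,2r,\pi_1)$. Then a standard "peeling" argument shows $\alpha * \beta^-$ is based-homotopic to $\ell_0 * \ell_1 * \cdots * \ell_k$: homotope $\beta^-$ by sliding, edge by edge, replacing each $b_i$ with $a_i$ at the cost of the correction lasso $\ell_i$. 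Concretely, $\alpha * \beta^- \simeq (a_0 * b_0^-) * (b_0 * a_1 * b_1^- * b_0^-) * \cdots$; reassociating and inserting cancelling pairs $(\alpha|_i)*(\alpha|_i)^-$ turns this into a product of the $\ell_i$. Since $\L(X,2r,\pi_1)$ is a subgroup (indeed normal, by Proposition \ref{PropIneq}), the product lies in it, giving $[\alpha * \beta^-]_{\pi_1}\in\L(X,2r,\pi_1)$.

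The main obstacle is the bookkeeping in the $\pi_1$ case: getting the based homotopy from $\alpha*\beta^-$ to the ordered product of lassos $\ell_0*\cdots*\ell_k$ right, keeping track of which connecting path attaches to which correction loop, and making sure every intermediate loop that appears is genuinely a geodesic lasso (or at least an $l$-lasso with $l<2r$) so that it lands in $\L(X,2r,\pi_1)$. The homology case is essentially immediate once the chain-level cancellation is written down; the geometric length bound $\length(\gamma_i) < 2r$ is the only quantitative input and is clear since a geodesic between points at distance $<r$ has length $<r$.
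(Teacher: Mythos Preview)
Your approach is essentially identical to the paper's: decompose $\alpha*\beta^-$ into one lasso per edge of $L$, with each lasso's loop part being $a_i*b_i^-$ of length $<2r$. There is one bookkeeping slip worth fixing: for the telescoping to work the connecting path in the $i$-th lasso must be $\beta|_i = b_0*\cdots*b_{i-1}$, not $\alpha|_i$ --- with $\alpha$-paths the product $\ell_0*\cdots*\ell_k$ does \emph{not} reduce to $\alpha*\beta^-$ (check $k=1$: you get $a_0 b_0^- a_0 a_1 b_1^- a_0^-$, and the middle $b_0^- a_0$ does not cancel). Your own ``concretely'' line $(a_0*b_0^-)*(b_0*a_1*b_1^-*b_0^-)*\cdots$ already uses the $\beta$-paths and is exactly the paper's decomposition; the subsequent attempt to convert this into a product of the $\ell_i$ via inserting $(\alpha|_i)*(\alpha|_i)^-$ is both unnecessary (the $\beta$-path lassos are already in $\L(X,2r,\pi_1)$) and does not actually yield the $\ell_i$.
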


\begin{proof}
 It suffices to prove the second statement. Suppose $L$ is given by points $x_0,x_1,\ldots,x_k$ and suppose that for each $i\in \ZZ (\textrm{mod } k+1)$ we have two geodesics $\alpha_i$ and $\beta_i$ from $x_i$ to $x_{i+1}$ so that $\alpha_i$ determine $\alpha$ and $\beta_i$ determine $\beta$. Define paths $\gamma_i=\beta_0 * \beta_1 * \ldots \beta_{i-1}$ for all $i>0$. Then
 $$
 \alpha * \beta^- \simeq \big(\alpha_0 * \beta_0^-\big) * \big( \gamma_1*(\alpha_1 * \beta_1^-)* \gamma_1^- \big)* \ldots
$$
$$
\ldots* \big( \gamma_k*
(\alpha_k * \beta_k^-)*
\gamma_k^-\big)
 $$
 is a concatenation of $(k+1)$-many $2r$-lassos. A geometric interpretation of the concatenation for the case $|L|=4$ can be found on Figure \ref{FigHoles}. The homological version of the proof is easier as it does not require us to base the loops by  $\gamma_i$'s.
 \begin{figure}
\begin{tikzpicture}
\node [circle,  scale=.3, fill=black,draw, label=above:$x_{1}$] (a1) at (-2,2) {};
\node [circle,  scale=.3, fill=black,draw, label=below:$x_{0}$] (a0) at (-2,-1) {};
\node [circle,  scale=.3, fill=black,draw, label=below:$x_{3}$] (a3) at (2,-1) {};
\node [circle,  scale=.3, fill=black,draw, label=above:$x_{2}$] (a2) at (2,2) {};
\draw [dashed] (a0) to [out = 60, in = -60]node[right] {$\beta_0$} (a1);
\draw (a0) to [out = 120, in = -120]node [left]{$\alpha_0$} (a1);
\draw [dashed] (a1) to [out = -30, in = -150] node [below]{$\beta_1$} (a2);
\draw  (a1) to [out = 30, in = 150] node [above]{$\alpha_1$} (a2);
\draw [dashed] (a3) to [out = 60, in = -60]node[right] {$\beta_2$} (a2);
\draw (a3) to [out = 120, in = -120]node [left]{$\alpha_2$} (a2);
\draw  (a0) to [out = -30, in = -150] node [below]{$\alpha_3$} (a3);
\draw [dashed] (a0) to [out = 30, in = 150] node [above]{$\beta_3$} (a3);
\end{tikzpicture}
\caption{A sketch of the decomposition of two $r$-samples of size $4$ from Proposition \ref{PropDif}. The solid line (filling $\alpha$)  differs from the dashed line  (filling $\beta$) by four loops $\alpha_i * \beta_i ^-$ of length less than $2r$.}
\label{FigHoles}
\end{figure}
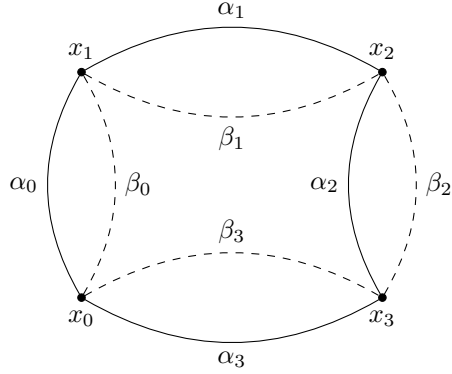
\end{proof}

\begin{Proposition}\label{PropKer}
Suppose $r>0$,  $\alpha$ is a loop  in a geodesic space $X$, $L$ is an $r$-sample of $\alpha$ and $\beta$ is any filling of $L$. Then $[\alpha]_G - [\beta]_{G}\in \S(X,r,G)$. If loops are based then $[\alpha * \beta^-]_{\pi_1}\in \S(X,r,\pi_1)$. 
\end{Proposition}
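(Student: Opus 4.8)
The plan is to mimic the lasso decomposition used in the proof of Proposition \ref{PropDif}, now comparing $\alpha$ directly with a filling $\beta$ of one of its $r$-samples rather than comparing two fillings of the same $r$-loop. As in that proof I would carry out the based $\pi_1$ statement in detail and then note that the homological statement follows by the same, in fact simpler, bookkeeping.

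\emph{Setup and the geometric input.} Write the $r$-sample as $0\leq t_0<t_1<\ldots<t_m\leq a$ (with $t_0=0$ in the based case), set $x_i=\alpha(t_i)$ and $x_{m+1}=x_0$, and decompose $\alpha$ into the subpaths $\alpha_i=\alpha|_{[t_i,t_{i+1}]}$ for $i<m$ together with the wrap‑around subpath $\alpha_m$ running from $x_m$ along $\alpha|_{[t_m,a]}$ and then $\alpha|_{[0,t_0]}$ to $x_0$. By the defining property of an $r$-sample, each $\alpha_i$ is a path from $x_i$ to $x_{i+1}$ whose image has diameter $\rho_i<r$. Let $\beta_i$ be the geodesic from $x_i$ to $x_{i+1}$ used in the filling $\beta$, so $\beta=\beta_0*\beta_1*\ldots*\beta_m$. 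The key observation, to be verified first, is that the loop $\alpha_i*\beta_i^-$ based at $x_i$ lies in $\cB(x_i,\rho_i)$ with $\rho_i<r$: points of $\alpha_i$ are within $\rho_i$ of $x_i$ by definition, and every point of the geodesic $\beta_i$ is within $d(x_i,x_{i+1})\leq\rho_i$ of $x_i$. Hence, for any path $q_i$ from $\bullet$ to $x_i$, the loop $q_i*(\alpha_i*\beta_i^-)*q_i^-$ is a $U_{\rho_i}$-lasso with $\rho_i<r$, so it represents an element of $\S(X,r,\pi_1)$.

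\emph{The algebra.} Choose $q_i=\beta_0*\beta_1*\ldots*\beta_{i-1}$ (with $q_0$ constant). A telescoping computation, identical in spirit to the one in Proposition \ref{PropDif} with the inverse $\beta$-segments cancelling the $\beta$-prefixes, shows that $\alpha*\beta^-\simeq \prod_{i=0}^m q_i*(\alpha_i*\beta_i^-)*q_i^-$. By the geometric input each factor lies in $\S(X,r,\pi_1)$, and since $\S(X,r,\pi_1)$ is a subgroup, so does the product; this is the based $\pi_1$ statement. For the homological statement one instead uses that a concatenation of paths is homologous to their sum and that an inverse path is homologous to the negative, so that $\alpha-\beta\equiv\sum_{i=0}^m(\alpha_i*\beta_i^-)$ modulo boundaries; each $\alpha_i*\beta_i^-$ is a loop enclosed in a ball of radius $\rho_i<r$, so each $[\alpha_i*\beta_i^-]_G\in\S(X,r,G)$, whence $[\alpha]_G-[\beta]_G\in\S(X,r,G)$.

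\emph{Main obstacle.} There is no deep difficulty; the proof is a routine elaboration of Proposition \ref{PropDif}. The only points requiring care are (i) making sure each $\alpha_i*\beta_i^-$ really sits in a closed ball of radius \emph{strictly} smaller than $r$, so that it qualifies as a $U_d$-lasso with $d<r$, which is precisely why one fills with the geodesics $\beta_i$ (whose length is $d(x_i,x_{i+1})$) rather than with arbitrary short arcs; and (ii) getting the basing‑path bookkeeping right in the $\pi_1$ version, namely basing the lassos along $\beta$ via the $q_i=\beta_0*\ldots*\beta_{i-1}$ rather than along $\alpha$ so that the telescoping cancellation goes through, and treating the wrap‑around subpath $\alpha_m$ correctly.
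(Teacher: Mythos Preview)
Your proposal is correct and follows essentially the same approach as the paper's proof: decompose $\alpha*\beta^-$ into the lassos $q_i*(\alpha_i*\beta_i^-)*q_i^-$ exactly as in Proposition~\ref{PropDif}, and observe that the definition of an $r$-sample forces each $\alpha_i*\beta_i^-$ to sit inside a ball of radius less than $r$ centered at $x_i$. Your treatment is in fact slightly more explicit than the paper's, since you name the radius $\rho_i=\diam(\alpha_i)<r$ and verify directly that the geodesic $\beta_i$ also lies in $\cB(x_i,\rho_i)$, whereas the paper just asserts containment in $B(x_i,r)$.
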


\begin{proof}
 The proof goes along the same lines as that of Proposition \ref{PropDif} and we will use the notation established there. Instead of being a geodesic, path $\alpha_i$ between $x_i=\alpha(t_i)$ and $x_{i+1}$ is now the restriction $\alpha|_{[t_i,t_{i+1}]}$ of possibly infinite length. By the definition of the $r$-sample we see that for each $i$ the loop $\alpha|_{[t_i,t_{i+1}]} * \beta_i^-$ is contained in $B(x_i,r)$. Hence we may use the same decomposition as suggested by Figure \ref{FigHoles} and the proof of Proposition \ref{PropDif} (producing a based decomposition using paths $\gamma_i$ in the case of $\pi_1$) to prove the claim.
\end{proof}

\begin{Cor}\label{CorStep}
 Suppose $X$ is a compact geodesic space. 
 
 If $X$ is SLSC then there exists $r>0$ so that for every loop $\alpha$, a filling of any $r$-sample of $\alpha$ is homotopic to $\alpha$ (the statement is true in based and in unbased settings).
 
If $G$ is an Abelian group and $X$ is $G$-SLSC, then there exists $r>0$ so that for every loop $\alpha$, a filling of any $r$-sample of $\alpha$ is homologous to $\alpha$ in $H_1(X,G)$.
\end{Cor}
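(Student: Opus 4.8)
The plan is to deduce this from Proposition \ref{PropKer}: if $\beta$ is a filling of an $r$-sample of $\alpha$, then $[\alpha*\beta^-]_{\pi_1}\in\S(X,r,\pi_1)$ in the based case and $[\alpha]_G-[\beta]_G\in\S(X,r,G)$ in the homological case. Hence it suffices to exhibit a single $r>0$ for which $\S(X,r,\pi_1)$ is trivial (resp.\ $\S(X,r,G)=0$); granting this, $[\alpha*\beta^-]_{\pi_1}=1$ (resp.\ $[\alpha]_G=[\beta]_G$) for every loop $\alpha$ and every $r$-sample, which is precisely the assertion.

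To construct such an $r$ I would use compactness together with SLSC. For each $y\in X$ choose, by SLSC, an open neighborhood $U_y$ for which the inclusion-induced map $\pi_1(U_y,y)\to\pi_1(X,y)$ (resp.\ $H_1(U_y,G)\to H_1(X,G)$, in the $G$-SLSC case) is trivial; replacing $U_y$ by the path component of $y$ we may assume each $U_y$ is path-connected. Let $\lambda>0$ be a Lebesgue number of the open cover $\{U_y\}_{y\in X}$ of the compact space $X$, and set $r:=\lambda/2$. Then for any $x\in X$ and $d<r$ the closed ball $\cB(x,d)$ has diameter $\le 2d<\lambda$, so $\cB(x,d)\subseteq U_y$ for some $y$; moreover $\cB(x,d)$ is path-connected, since $X$ is geodesic and a geodesic from $x$ to any point of $\cB(x,d)$ stays inside $\cB(x,d)$.

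With this $r$, take a generating $U_d$-lasso $\alpha*\beta*\alpha^-$ of $\S(X,r,\pi_1)$, where $d<r$ and $\beta$ is a loop contained in some $\cB(x,d)\subseteq U_y$, based at a point $p\in U_y$. Because $U_y$ is path-connected, change of basepoint along a path in $U_y$ from $y$ to $p$ turns the triviality of $\pi_1(U_y,y)\to\pi_1(X,y)$ into the triviality of $\pi_1(U_y,p)\to\pi_1(X,p)$; hence $\beta$ is nullhomotopic in $X$, and the lasso, being a conjugate of $1$, is trivial. Thus $\S(X,r,\pi_1)=1$. In the homological case a generating cycle lies in some $\cB(x,d)\subseteq U_y$, so its class in $H_1(X,G)$ factors through $H_1(U_y,G)\to H_1(X,G)=0$, giving $\S(X,r,G)=0$. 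For the unbased homotopy statement, reparametrize $\alpha$ to begin at the first vertex of the given $r$-sample and take that vertex as basepoint; the construction of $r$ above is basepoint-independent, so the based argument applies verbatim and forgetting the basepoint yields the free homotopy.

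The only delicate points are mild: fixing the quantifier order so that one $r$ works for all loops $\alpha$ simultaneously — which is exactly what the Lebesgue-number lemma supplies — and transporting the SLSC condition from the distinguished point $y$ of $U_y$ to an arbitrary basepoint of a small ball inside $U_y$, for which one needs path-connectedness of small balls, guaranteed here by the geodesic hypothesis.
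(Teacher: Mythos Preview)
Your proof is correct and follows essentially the same approach as the paper: reduce to showing $\S(X,r,\pi_1)=0$ (resp.\ $\S(X,r,G)=0$) via Proposition~\ref{PropKer}, then use compactness together with SLSC to produce a uniform $r$. The paper's proof is simply a terse version of yours, asserting directly that compactness and SLSC yield an $r$ with $\pi_1(B(x,r))\to\pi_1(X)$ trivial for all $x$; your Lebesgue-number argument and basepoint discussion make this step explicit.
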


\begin{proof}
If $X$ is compact and SLSC then there exists $r$ so that for each $x$ the inclusion $B(x,r)\to X$ induces a  map on $\pi_1$  that is trivial, and hence $\S(X,r,\pi_1)=0.$ The first statement follows by Proposition \ref{PropKer}. The second part may be proved in a similar fashion.
\end{proof}

The following two theorems are crucial for the arguments to come. Roughly speaking, the first one states that an $r$-loop is $r$-null if and only if its filling is induced by loops of length at most $3r$. As such it expresses $r$-nullhomotopies of $r$-loops in terms of groups $\L$. The second theorem expresses $r$-nullhomotopies of $r$-samples in terms of groups $\S$.

\begin{Proposition}\label{PropNull}
 Suppose $r>0$, $L$ is an $r$-loop  in a geodesic space $X$, and $\alpha$ is a filling of $L$. 
\begin{enumerate}
 \item If $L$ is based then the following holds:  $L$ is $r$-null if and only if $[\alpha]_{\pi_1} \in \L(X, 3r, \pi_1)$.
 \item For any Abelian group $G$ the following holds:  $0=[L]_G\in \Rips(X,r)$ if and only if  $[\alpha]_G\in \L(X,3r,G)$.
\end{enumerate}
  \end{Proposition}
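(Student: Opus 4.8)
The plan is to prove the two directions of each equivalence separately, treating the $\pi_1$ statement in detail and indicating that the homological version follows the same scheme (as in Propositions \ref{PropDif} and \ref{PropKer}, the $H_1$ argument is strictly easier because one need not keep track of basepoints via connecting paths $\gamma_i$). Throughout, write $L=(x_0,x_1,\ldots,x_k,x_{k+1}=x_0)$ with $d(x_i,x_{i+1})<r$, and let $\alpha$ be the filling obtained by connecting consecutive $x_i$ by geodesics $\alpha_i$.

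For the "only if" direction, suppose $L$ is $r$-null, i.e. contractible in $\Rips(X,r)$. A simplicial nullhomotopy is a filling of the $2$-sphere-minus-a-disc diagram, or equivalently: $L$ bounds in $\Rips(X,r)$ means $L$ is, up to $r$-homotopy, a product of conjugates of boundaries of $2$-simplices $[y,z,w]$ with $d(y,z),d(z,w),d(w,y)<r$. I would make this precise by a standard argument: a simplicial nullhomotopy of $L$ in the $2$-skeleton of $\Rips(X,r)$ decomposes the disc it bounds into triangles, and traversing the boundary of this triangulated disc expresses $[L]$ in $\pi_1(\Rips(X,r)^{(1)})$ as a product of conjugates (by edge-paths, which we fill by geodesics to get paths in $X$) of the boundary loops of those triangles. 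Applying a filling to each such triangle boundary $[y,z,w]$ gives a geodesic triangle in $X$ of perimeter $<3r$, hence a loop of length $<3r$; basing it through the (geodesic filling of the) connecting edge-path produces an element of $\L(X,3r,\pi_1)$. Since filling is compatible with concatenation (noted after Definition \ref{DefRLoop}) and, up to based homotopy, with the conjugation moves, we conclude $[\alpha]_{\pi_1}\in\L(X,3r,\pi_1)$. I expect this direction — making the "nullhomotopy as product of triangle boundaries" step clean while tracking basepoints — to be the main obstacle; the geometric content is easy but the bookkeeping needs care, and one must invoke Proposition \ref{PropRips}(8) or a direct triangulation argument to see that replacing the actual geodesic choices in $\alpha$ by the ones arising in the decomposition does not change the based homotopy class modulo $\L(X,2r,\pi_1)\subseteq\L(X,3r,\pi_1)$ (here Proposition \ref{PropDif} is exactly what licenses the change of fillings).

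For the "if" direction, suppose $[\alpha]_{\pi_1}\in\L(X,3r,\pi_1)$, so $\alpha$ is based-homotopic to a concatenation of $l$-lassos $\delta_j*\gamma_j*\delta_j^-$ with $\length(\gamma_j)<3r$. By Proposition \ref{PropRips}(6), passing to $r$-samples preserves $r$-homotopy type and is compatible with homotopies and concatenation; an $r$-sample of $\alpha$ is therefore $r$-homotopic to the concatenation of $r$-samples of the lassos. An $r$-sample of $\delta_j*\gamma_j*\delta_j^-$ is, again by (6), $r$-homotopic to (an $r$-sample of $\delta_j$) $*$ (an $r$-sample of $\gamma_j$) $*$ (its reverse), and the middle factor is an $r$-sample of a loop of length $<3r$, hence $r$-null by Proposition \ref{PropRips}(5). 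Thus each lasso's $r$-sample is $r$-null, so their concatenation is $r$-null, so the $r$-sample of $\alpha$ is $r$-null; finally, by Proposition \ref{PropRips}(4) any two $r$-samples of $\alpha$ are $r$-homotopic, and since $L$ itself is $r$-homotopic to an $r$-sample of $\alpha$ via (6) applied to the constant homotopy (or directly: $L$ is an $r$-sample of its own filling $\alpha$), $L$ is $r$-null. The homological statement is proved by the identical outline with lassos replaced by cycles of length $<3r$ and based homotopies by homologies, dropping all basepoint bookkeeping.
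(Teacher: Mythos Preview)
Your proposal is correct and follows essentially the same route as the paper's proof: for the ``only if'' direction, realize an $r$-nullhomotopy as a triangulated disc with boundary $L$, fill each edge by a geodesic, and decompose $\alpha$ as a concatenation of lassos whose cores are the filled triangle boundaries (each of perimeter $<3r$); for the ``if'' direction, reduce via Proposition~\ref{PropRips}(6) to showing a single $l$-lasso with $l<3r$ has $r$-null $r$-sample, which is Proposition~\ref{PropRips}(5). One minor simplification the paper makes that you could adopt: in the ``only if'' direction, when defining the filling map on the $1$-skeleton of the disc, simply \emph{choose} the boundary geodesics to be those of the given filling $\alpha$; this makes the boundary loop equal to $\alpha$ on the nose and removes any need to invoke Proposition~\ref{PropDif} or worry about changing fillings modulo $\L(X,2r,\pi_1)$.
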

  
\begin{proof}
We will prove (1) only. The proof of (2) is simpler as it does not require us to use based loops.

Suppose $L$, given by $\bullet=x_0, x_1, \ldots, x_k$, is $r$-null. An  $r$-nullhomotopy can be thought of as a maps on a triangulation $\Delta$ of a closed disc $D$ with vertex set $V$, edge set $E$ and the following properties:
\begin{itemize}
 \item the restriction to the boundary of $(D,\Delta)$ is $L$;
 \item given any pair of endpoints of an edge in $E$, their distance is less than $r$.
\end{itemize}
We can map the $1$-skeleton of $(D,\Delta)$ to $X$ in the obvious way: 
\begin{itemize}
 \item the map is identity on $V$ (recall that vertices of $\Rips(X,r)$ are points in $X$);
 \item given any pair of endpoints $x,y$ of an edge of $\Delta$, connect the points $x,y\in X$ by a geodesic (which is of length less than $r$);
 \item when connecting consecutive points of $L$ take the appropriate geodesic so that the induced filling on $L$ is $\alpha$.
\end{itemize}
 \begin{figure}
\begin{tikzpicture}
\coordinate (A) at (-1,0);
\coordinate (B) at (-5, 1);
\coordinate (C) at (-5, 3);
\coordinate (D) at (-1, 3);
\coordinate (A1) at (4,0);
\coordinate (B1) at (0, 1);
\coordinate (C1) at (0, 3);
\coordinate (D1) at (4, 3);
%
\draw [gray, top color=blue,bottom color=white, fill opacity =.3] (A) -- (B) -- (C) -- cycle;
\draw [gray, bottom color=blue,top color=white, fill opacity =.3] (A) -- (D) -- (C) -- cycle;
\draw [gray, top color=blue,bottom color=white,  opacity =.1, fill opacity =.1] (A1) -- (B1) -- (C1) -- cycle;
\draw [gray, bottom color=blue,top color=white,  opacity =.1, fill opacity =.1] (A1) -- (D1) -- (C1) -- cycle;
%
\draw[red, thick](A1) to  [bend left=25] node[below]{$r$}(B1);
\draw[red, thick](A1)to [bend left=15]node[left]{$r$}(C1);
\draw[red, thick](C1) to [bend left=15]node[left]{$r$} (B1);
\draw[red, thick](A1) to [bend left=25] node[right]{$r$}(D1);
\draw[red, thick](C1) to [bend left=25] node[above]{$r$}(D1);
\draw [->](2.5,2) arc (-30:240:.5);
\draw (2, 2.2) node {$<3r$};
\draw [fill] (A) circle [radius=0.1];
\draw [fill] (B) circle [radius=0.1];
\draw [fill] (C) circle [radius=0.1];
\draw [fill] (D) circle [radius=0.1];
\draw [fill] (A1) circle [radius=0.1];
\draw [fill] (B1) circle [radius=0.1];
\draw [fill] (C1) circle [radius=0.1];
\draw [fill] (D1) circle [radius=0.1];
\end{tikzpicture}
\caption{
A sketch of a map $\f$ of Proposition \ref{PropNull}. Given a configuration of abstract triangles on the left, construct the appropriate system of geodesics (red lines) of length less than $r$ (as suggested by the red label) on the right. Note that the decomposition into triangles on the left corresponds to the decomposition into loops of length less than $3r$ on the right.
}
\label{FigRips}
\end{figure}
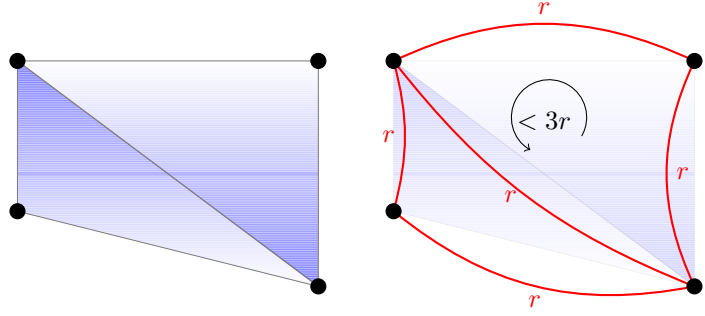
We call this map $\f$, see Figure \ref{FigRips}.
Loop $\alpha$ can now be decomposed as a concatenation of lassos generated by triangles of $\Delta$ in the following way. First orient all triangles of $\Delta$ consistently with $L$. For each triplet $T=[x,y,z]$ forming an oriented triangle in $\Delta$ consider a loop $\beta_T$, which is a filling of the $r$-loop $x,y,z$ induced by $\f$, i.e., for each pair of points, the chosen geodesic between them is the one chosen in the construction of $\f$. It is well known (and can be easily proved by induction) that we may also choose \new{finite length} paths $\gamma_T$ based at $\bullet$ so that $\alpha$ can be expressed as a concatenation of $l_T$-loops $\gamma_T *\beta_T * \gamma_T^-$ in the appropriate order, where $T$ goes through the set of all triangles $\Delta_2$ of $\Delta$ and $l_T< 3r, \forall T$. A simple example is provided by Figure \ref{FigDecomposition}. Thus $\alpha \in \L(X, 3r, \pi_1)$. In the homological case paths $\gamma_T$ are obsolete and $[\alpha]_G=\sum_{T\in \Delta_2} [\beta_T]_G$.

 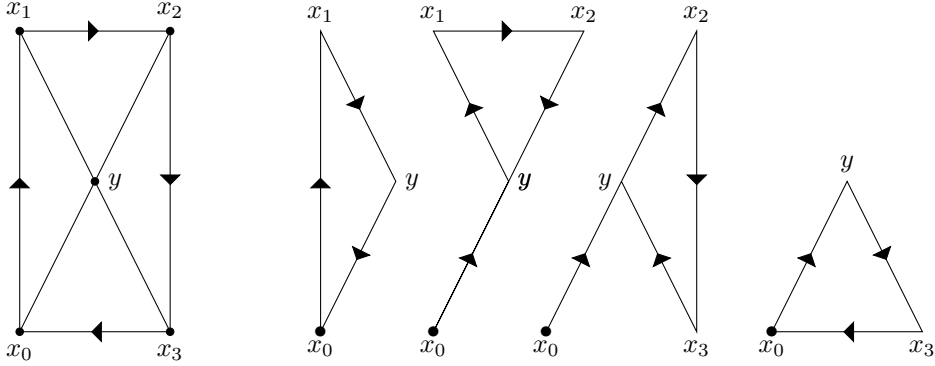
\begin{figure}
\begin{tikzpicture}
%

%
\node [circle,  scale=.3, fill=black,draw, label=above:$x_{1}$] (a1) at (-1,2) {};
\node [circle,  scale=.3, fill=black,draw, label=below:$x_{0}$] (a0) at (-1,-2) {};
\node [circle,  scale=.3, fill=black,draw, label=below:$x_{3}$] (a3) at (1,-2) {};
\node [circle,  scale=.3, fill=black,draw, label=above:$x_{2}$] (a2) at (1,2) {};
\node [circle,  scale=.3, fill=black,draw, label=right:$y$] (b) at (0,0) {};
\begin{scope}[ every node/.style={sloped,allow upside down}]
\draw (a0)-- node{\midarrow}(a1) -- node{\midarrow}(a2)--node{\midarrow}(a3)--node{\midarrow}(a0)--(b)--(a2);
\draw (a1)--(b)--(a3);
\draw (3,-2)node{$\bullet$}--(3,-2)node[below]{$x_0$}--node{\midarrow}(3,2)node[above]{$x_1$}--node{\midarrow}(4,0)node[right]{$y$}--node{\midarrow}(3,-2);
\draw (4.5,-2)node{$\bullet$}--(4.5,-2)node[below]{$x_0$}--node{\midarrow}(5.5,0)node[right]{$y$}--node{\midarrow}(4.5,2)node[above]{$x_1$}--node{\midarrow}(6.5,2)node[above]{$x_2$}--node{\midarrow}(5.5,0)node[right]{$y$}--(4.5,-2);
\draw (6,-2)node{$\bullet$}--(6,-2)node[below]{$x_0$}--node{\midarrow}(7,0)node[left]{$y$}--node{\midarrow}(8,2)node[above]{$x_2$}--node{\midarrow}(8,-2)node[below]{$x_3$}--node{\midarrow}(7,0);
\draw (9,-2)node{$\bullet$}--(9,-2)node[below]{$x_0$}--node{\midarrow}(10,0)node[above]{$y$}--node{\midarrow}(11,-2)node[below]{$x_3$}--node{\midarrow}(9,-2);
\end{scope}
\end{tikzpicture}
\caption{An excerpt from the proof of Proposition \ref{PropNull}. Given an $r$-null $r$-loop $x_0,x_1,x_2,x_3$ on the left, an $r$-nullhomotopy is given by a triangulation $\Delta$ of the disc (depicted as the rectangle on the left in this figure). In this case $\Delta$ contains an additional vertex $y$ and four triangles in $\Rips(X,r)$. Thinking of vertices of $\Delta$ as points in $X$, we may replace the edges of $\Delta$ by geodesics and obtain the same scheme in $X$ with $\alpha$ along the boundary. Loop $\alpha$ is decomposed into four loops of length less than $3r$ and the ordered loops on the right are the four lassos, whose concatenation in the suggested order demonstrates that $\alpha \in  \L(X, 3r, \pi_1)$.}
\label{FigDecomposition}
\end{figure}

Now suppose $\alpha \in \L(X, 3r, \pi_1)$. By Proposition \ref{PropRips} (6) we may assume $\alpha$ to be a concatenation of $3r$-lassos. Hence it suffices to prove that any lasso of length less than $3r$ is $r$-null. Assume therefore $\alpha = \gamma * \beta *\gamma^-$ where $\gamma$ is a based path and $\beta$ is a loop of length less than $3r$. It suffices to prove that any $r$-sample of $\beta$ is $r$-null (in the unbased sense), which is true by Proposition \ref{PropRips}(5).
\end{proof}

\begin{Theorem}\label{ThmNull}
 Suppose $r>0$, $\alpha$ is a loop  in a geodesic space $X$, and $L$ is an $r$-sample of $\alpha$. 
\begin{enumerate}
 \item If $\alpha$ is based then the following holds:  $L$ is $r$-null if and only if $[\alpha]_{\pi_1} \in \S(X, 3r/2, \pi_1)$.
 \item For any Abelian group $G$ the following holds:  $0=[L]_G\in \Rips(X,r)$ if and only if  $[\alpha]_G\in \S(X,3r/2,G)$.
\end{enumerate}
  \end{Theorem}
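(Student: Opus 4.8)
The plan is to prove the $\pi_1$-statement; the $H_1$-statement will follow by the same argument with the basepoint bookkeeping deleted, exactly as in the proofs of Propositions~\ref{PropNull} and \ref{PropKer} (products of loops become sums of cycles and the conjugations disappear). Throughout fix a filling $\beta$ of $L$, so that $L$ is at the same time an $r$-sample of $\beta$. For the direction ``$L$ is $r$-null $\Rightarrow [\alpha]_{\pi_1}\in\S(X,3r/2,\pi_1)$'': if $L$ is $r$-null then $[\beta]_{\pi_1}\in\L(X,3r,\pi_1)$ by Proposition~\ref{PropNull}(1), and $\L(X,3r,\pi_1)\subseteq\S(X,3r/2,\pi_1)$ by Proposition~\ref{PropIneq} applied with $r$ replaced by $3r/2$; moreover $[\alpha*\beta^-]_{\pi_1}\in\S(X,r,\pi_1)\subseteq\S(X,3r/2,\pi_1)$ by Proposition~\ref{PropKer}. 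Since $[\alpha]_{\pi_1}=[\alpha*\beta^-]_{\pi_1}\cdot[\beta]_{\pi_1}$ and $\S(X,3r/2,\pi_1)$ is a subgroup, we are done.

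For the converse the geometric heart is the following claim: \emph{if $\beta_0$ is a loop contained in a closed ball $\cB(x,d)$ with $d<3r/2$, then every $r$-sample of $\beta_0$ is $r$-null}. Granting this, assume $[\alpha]_{\pi_1}\in\S(X,3r/2,\pi_1)$. By the definition of $\S$, and since the inverse of a $U_d$-lasso is again a $U_d$-lasso, $\alpha$ is homotopic to a concatenation of $U_{d_j}$-lassos $\gamma_j*\beta_{0,j}*\gamma_j^-$ with $d_j<3r/2$. By Proposition~\ref{PropRips}(6) an $r$-sample of $\alpha$ is $r$-homotopic to the concatenation of $r$-samples of these lassos (choose the junction points among the sample points), and an $r$-sample of $\gamma_j*\beta_{0,j}*\gamma_j^-$ taken through its two junction points has the form $\tilde\gamma_j*\tilde\beta_{0,j}*\tilde\gamma_j^-$; this represents the image of $[\tilde\beta_{0,j}]$ under a change-of-basepoint isomorphism of $\pi_1(\Rips(X,r))$, hence is $r$-null because $\tilde\beta_{0,j}$ is, by the claim. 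So some $r$-sample of $\alpha$ is $r$-null, and therefore so is $L$, by Proposition~\ref{PropRips}(4). (For $H_1$ one instead writes $[\alpha]_G$ as a sum $\sum_j\lambda_j[c_j]_G$ of classes of cycles lying in balls of radius $<3r/2$ and argues identically using the homological form of the claim.)

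It remains to prove the claim, and this is the step where the strict bound $d<3r/2$ is used and, I expect, the main obstacle. Choose an $r$-sample $q_0,q_1,\dots,q_n$ of $\beta_0$ fine enough that $2d+d(q_i,q_{i+1})<3r$ for every $i$; this is possible by uniform continuity of $\beta_0$ precisely because $d<3r/2$, and by Proposition~\ref{PropRips}(4) it suffices to show this particular $r$-loop is $r$-null. For each $i$ pick a geodesic $h_i$ from $x$ to $q_i$ (of length $d(x,q_i)\le d$) and an $r$-sample $\tilde h_i$ of $h_i$. Inserting the null spur $\tilde h_i^-*\tilde h_i$ at each vertex $q_i$ of $L$ and regrouping exhibits $L$, after a change of basepoint from $q_0$ to $x$ along $\tilde h_0$, as a product of the edge-loops $S_i:=\tilde h_i*(q_iq_{i+1})*\tilde h_{i+1}^-$ (indices mod $n+1$). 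Each $S_i$ is an $r$-sample of the loop $h_i*g_i*h_{i+1}^-$, where $g_i$ is a geodesic from $q_i$ to $q_{i+1}$; this loop has length $d(x,q_i)+d(q_i,q_{i+1})+d(x,q_{i+1})\le 2d+d(q_i,q_{i+1})<3r$, so $S_i$ is $r$-null by Proposition~\ref{PropRips}(5). Hence $L$ is a product of $r$-null loops, so $L$ is $r$-null.

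Pictorially, the $S_i$ bound triangulated discs that glue along the spokes $\tilde h_i$ into a triangulated disc in $\Rips(X,r)$ bounded by $L$ — a ``fan'' based at the centre $x$. It is worth stressing why the obvious alternatives fail: a naive radial nullhomotopy of $L$ towards $x$ does not work because intermediate loops need not be $r$-loops (their ``width'' can reach roughly $2d$), and a single cone to $x$ is unavailable since the sample points may sit at distance up to $d>r$ from $x$. The working idea is instead to cut $L$ into spoke-triangles whose \emph{perimeters} can be forced below $3r$ by refining the sample, which is exactly what $d<3r/2$ buys; this is also why the analogous statement with $3r/2$ replaced by any larger constant is false (compare the circle $C$ of circumference $1$, which lies in a ball of radius $1/2$ and whose Rips complex is not simply connected at the parameter $1/3$).
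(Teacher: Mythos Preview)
Your proof is correct and follows essentially the same approach as the paper's: the forward direction is identical, and for the converse both reduce via Proposition~\ref{PropRips}(6) to the ``fan'' claim and prove it by decomposing a fine $r$-sample of the small loop into spoke-triangles of perimeter $<3r$, then invoking Proposition~\ref{PropRips}(5). The only cosmetic difference is that the paper uses a single midpoint $y_i$ on each spoke from the center $w$ to $x_i$ (yielding the explicit 5-point $r$-loop $w,y_i,x_i,x_{i+1},y_{i+1}$), whereas you use a general $r$-sample $\tilde h_i$ of each spoke and a change-of-basepoint rather than first arranging $w$ to lie on the loop.
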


\begin{proof}
 We will focus on (1), the proof of (2) is again simpler. 
 
 Suppose $L$ is $r$-null. Choose a filling $\beta$ of $L$. Observe that:
 \begin{itemize}
 	\item $[\beta]_{\pi_1} \in \L(X, 3r, \pi_1)$ by Proposition \ref{PropNull};
	\item $[\alpha * \beta^-]_{\pi_1}\in \S(X, r, \pi_1)$ by Proposition \ref{PropKer}.
\end{itemize}
By Proposition \ref{PropIneq} we have $[\alpha]_{\pi_1} \in \S(X, 3r/2, \pi_1)$.

Now suppose $[\alpha]_{\pi_1} \in \S(X, 3r/2, \pi_1)$. By Proposition \ref{PropRips} (6) it suffices to prove that any $U_d$-lasso with $d<3r/2$ is $r$-null. Assume therefore $\alpha = \gamma * \beta *\gamma^-$ where $\gamma$ is a based path and $\beta$ is an $U_d$-lasso with $d<3r/2$. It suffices to prove that any $r$-sample of $\beta$ is $r$-null (in the unbased sense). 

Choose $w\in X$ so that $\beta \subset \cB(w, d)$. Additionally assume $w\in \beta$: if that is not the case connect some point on $\beta$ to $w$ by a geodesic and then backtrack. Inserting such detour to $\beta$ does not change the situation (containment on $\cB(w, d)$ or the homotopy type of $\beta$,) but it does add $w$ to $\beta$. Define step $s=3r-2d$. By Proposition \ref{PropRips}(4) we may assume $L$ is an $s$-loop determined by $w=x_0, x_1, \ldots, x_k, x_{k+1}=w\in X$. 
For each $i$ define $y_i$ as some midpoint between $x_i$ and $w$  as in Figure \ref{FigSpanier}.
Note that $L$ is $r$-homotopic to the join $L_0 *L_1*\ldots*L_k$, where each $r$-loop $L_i$ is given by the sequence  $w, y_i, x_i, x_{i+1},y_{i+1},w$. Since points $y_i$ are midpoints, the length of each $L_i$ is of length $d(w, x_i)+d(x_i, x_{i+1}) + d(x_{i+1},)< d + s + d =3r$ by the definition of $s$. Thus by Proposition \ref{PropRips}, each $L_i$ is $r$-null and so is $L$. 
\end{proof}

 \begin{figure}
\begin{tikzpicture}
\draw [thick](0,0) circle (2);
\draw (0,0) node{$\bullet$} node[above]{$w$};
\draw (2,0) node[right]{$\cB(w, d)$};
\draw (0,1.4) node{$\beta$};
\draw plot [smooth ] coordinates {(0,0)(.8,.2)(0,1.2)(-1.9,0)(-.4, -1.4)(1.5,-.6)(0,0)};
\draw (-1.5,.4) node{$\bullet$} node[above]{$x_i$};
\draw (-1.5,-0.7) node{$\bullet$} node[right]{$x_{i+1}$};
\draw[dashed] (0,0) -- (-1.5, .4);
\draw[dashed] (0,0) -- (-1.5, -.7);
\draw (-.75, .2) node {$\bullet$} node[above] {$y_i$};
\draw (-.75, -.35) node {$\bullet$} node[below right] {$y_{i+1}$};
\end{tikzpicture}
\caption{A sketch from the proof of Theorem \ref{ThmNull}, representing an $r$-loop $L_i$ given by the sequence  $w, y_i, x_i, x_{i+1},y_{i+1},w$. The step parameter $s$ is chosen so that the length of such $L_i$ is less than $3r$.}
\label{FigSpanier}
\end{figure}
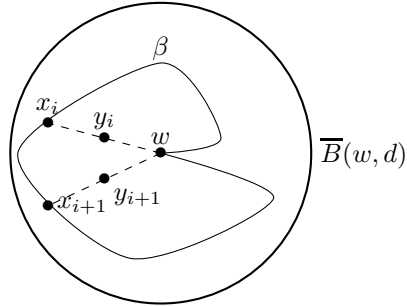

\section{The correspondence theorems}
\label{SectFundGroup}

The results of Section \ref{SecSizeHoles} relate $r$-homotopies in Rips complexes and homotopies in the underlying space. In this section, we extend this to a relation between the fundamental group (and the first homology groups) of the Rips complex, and modified fundamental group (and the first homology groups) of the underlying space. 
We prove various correspondence theorems: Theorems \ref{ThmPerSpa}, \ref{ThmPerCirc}, and \ref{ThmPerDiam}. These make use of groups $\L, \S, $ and $\D$ respectively, to precisely describe how persistence measures one-dimensional holes. As usual, the discrete-continuous interplay of the correspondence is induced by samples and fillings.

We conclude the section with a technical result stating that fundamental and first homology groups of Rips complexes of compact spaces are often finitely generated. 

\subsection{Radius of the enclosing ball as a measurement of a hole}

\begin{Definition}\label{DefRho}
Suppose $r>0$, $X$ is a geodesic space and $G$ is an Abelian group. Maps
$$
\rho_r^{\pi_1} \colon \pi_1(X, \bullet) \to \pi_1(\Rips(X,r),\bullet),
$$
$$
\rho_r^{G} \colon H_1(X,G) \to H_1(\Rips(X,r),G)
$$
are defined by mapping a representative $[\alpha]_*$ to any $r$-sample.
Maps $\rho_r^{\pi_1}$ and $\rho_r^G$ are well defined by Proposition \ref{PropRips} (4), (6), and are obviously a homomorphism. 
\end{Definition}

\begin{Proposition}\label{PropPrePers}
Let $X$ be a geodesic space. Then the following hold for all $r>0$:
\begin{enumerate}
 \item maps $\rho_r^{\pi_1}$  are surjective;
 \item the kernel of $\rho_r^{\pi_1}$ is $\S(X, 3r/2, \pi_1)$;
 \item maps $\rho_r^{\pi_1}$ commute with inclusion induced maps $$
 \pi_1(i_{p,q})\colon \pi_1(\Rips(X,r),\bullet)\to \pi_1(\Rips(X,q),\bullet);
 $$
 \item suppose $X$ is compact. Then $X$ is SLSC if and only if $\rho_r^{\pi_1}$ is an isomorphism for small  values of $r$.
\end{enumerate}
\end{Proposition}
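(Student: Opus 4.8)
The plan is to derive all four parts from the results of Sections \ref{SectRipsCxes} and \ref{SecSizeHoles}; part (2) is where the real content sits, being essentially a translation of Theorem \ref{ThmNull}, while (1), (3) and (4) are then routine consequences.

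First I would handle (1). Given a class in $\pi_1(\Rips(X,r),\bullet)$ represented by a based $r$-loop $L=(\bullet=x_0,x_1,\dots,x_k,x_0)$, let $\alpha$ be a filling of $L$. Since $\alpha$ is a concatenation of geodesic segments each of length $<r$, the loop $L$ is itself an $r$-sample of $\alpha$ (with sample parameters the endpoints of the segments, for which $\diam\alpha([t_i,t_{i+1}])=d(x_i,x_{i+1})<r$), so $\rho_r^{\pi_1}([\alpha]_{\pi_1})=[L]_{\pi_1}$, using that $\rho_r^{\pi_1}$ is independent of the chosen $r$-sample by Proposition \ref{PropRips}(4). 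For (2), $[\alpha]_{\pi_1}\in\ker\rho_r^{\pi_1}$ says exactly that an $r$-sample $L$ of $\alpha$ is $r$-null, which by Theorem \ref{ThmNull}(1) holds iff $[\alpha]_{\pi_1}\in\S(X,3r/2,\pi_1)$. For (3), I would note that the diameter conditions defining an $r$-sample persist at every larger scale $q>r$, so any $r$-sample $L$ of $\alpha$ is also a $q$-sample; hence the image of $\rho_r^{\pi_1}([\alpha])$ under the inclusion-induced map $\pi_1(\Rips(X,r),\bullet)\to\pi_1(\Rips(X,q),\bullet)$ is $[L]_{\pi_1}$, while $\rho_q^{\pi_1}([\alpha])$ is represented by some $q$-sample of $\alpha$, and all $q$-samples of $\alpha$ are $q$-homotopic by Proposition \ref{PropRips}(4); thus the square commutes.

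Finally (4). In the forward direction I would invoke (the proof of) Corollary \ref{CorStep}: compactness and SLSC together produce $r_0>0$ with $\S(X,r_0,\pi_1)=0$; since the groups $\S(X,\_,\pi_1)$ are monotone in the parameter, $\S(X,3r/2,\pi_1)=0$ whenever $r\le 2r_0/3$, so by (2) each such $\rho_r^{\pi_1}$ is injective, and by (1) it is surjective, hence an isomorphism. Conversely, fix a small $r$ for which $\rho_r^{\pi_1}$ is an isomorphism; then (2) gives $\S(X,3r/2,\pi_1)=0$. To conclude $X$ is SLSC, fix $x\in X$, choose a path $\gamma$ from $\bullet$ to $x$, and take any loop $\beta$ in $B(x,r/3)$ based at $x$: the lasso $\gamma*\beta*\gamma^-$ is a $U_d$-lasso with $d=r/3<3r/2$, hence lies in $\S(X,3r/2,\pi_1)=0$ and is nullhomotopic, so $\beta$ is nullhomotopic in $X$; thus $\pi_1(B(x,r/3),x)\to\pi_1(X,x)$ is trivial and $X$ is SLSC.

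I do not expect a serious obstacle once Theorem \ref{ThmNull} and Corollary \ref{CorStep} are available; the only points demanding care are the basepoint bookkeeping in the converse of (4) — conjugating $\beta$ by $\gamma$ so that it genuinely lands in $\S(X,3r/2,\pi_1)\le\pi_1(X,\bullet)$ — and tracking the factor $3/2$ so that the $\S$-groups appearing for the relevant small radii really are trivial.
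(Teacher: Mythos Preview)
Your proof is correct and follows essentially the same approach as the paper: (1) via fillings, (2) as a direct restatement of Theorem \ref{ThmNull}(1), (3) by noting that an $r$-sample is also a $q$-sample and invoking Proposition \ref{PropRips}(4), and (4) via the equivalence between SLSC and the vanishing of $\S(X,t,\pi_1)$ for some $t>0$. Your treatment of (3) and of the converse in (4) is in fact slightly more explicit than the paper's (which for (4) simply asserts that compact SLSC is equivalent to $\S(X,t,\pi_1)=0$ for some $t$); the only cosmetic point is that in (4) any radius below $3r/2$ would serve in place of your $r/3$, but your choice is of course fine.
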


\begin{proof}
(1) follows from the fact that each $r$-loop admits a filling. (2) is  Theorem \ref{ThmNull}(1). (3) follows directly from the definitions of maps or by Proposition \ref{PropRips} (2). 

It remains to prove (4). Compact $X$ is SLSC iff there exists $t>0$ so that for each $x\in X$ the inclusion $B(x,t)\to X$ induces the trivial map on the fundamental group. This in turn is equivalent to $\S(X,t,\pi_1)=0$ which by (1) and (2) is equivalent to the fact that $\rho_t^{\pi_1}$ is an isomorphism. The later also implies that $\rho_{t'}^{\pi_1}$ is an isomorphism for all $t'<t$.
\end{proof}

\begin{Proposition}[A homological version of Proposition \ref{PropPrePers}] \label{PropHPrePers}
Let $X$ be a geodesic space and $G$ an Abelian group. Then the following hold for all $r>0$:
\begin{enumerate}
 \item maps $\rho_r^{G}$  are surjective;
 \item the kernel of $\rho_r^{G}$ is $\S(X, 3r/2, G)$;
 \item maps $\rho_r^{G}$ commute with inclusion induced maps $$
 H_1(i_{p,q},G)\colon H_1(\Rips(X,r), G)\to H_1(\Rips(X,q), G);
 $$
 \item suppose $X$ is compact. Then $X$ is $G$-SLSC if and only if $\rho_r^{G}$ is an isomorphism for small values of $r$.
\end{enumerate}
\end{Proposition}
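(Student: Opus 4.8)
The plan is to repeat the proof of Proposition~\ref{PropPrePers} essentially verbatim, substituting at each step the homological counterpart of the quoted $\pi_1$-result; all of these are already available, namely Theorem~\ref{ThmNull}(2), the homological clauses of Proposition~\ref{PropRips} and of Proposition~\ref{PropKer}, and Corollary~\ref{CorStep}. For surjectivity (1): since $\Rips(X,r)$ is path-connected, $H_0(\Rips(X,r);\ZZ)=\ZZ$ is free, so by universal coefficients $H_1(\Rips(X,r),G)\cong H_1(\Rips(X,r),\ZZ)\otimes G$ is generated by classes of simplicial loops (equivalently, every $G$-valued $1$-cycle in the $1$-skeleton is a $G$-combination of edge-loops). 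Each such loop is an $r$-loop $L$; choosing a filling $\alpha$ of $L$ one gets $\rho_r^G([\alpha]_G)=[L]_G$, because $L$ is $r$-homotopic, hence homologous in $\Rips(X,r)$, to the $r$-sample produced by $\rho_r^G$. For the kernel (2): this is exactly Theorem~\ref{ThmNull}(2) — $[\alpha]_G\in\ker\rho_r^G$ iff its $r$-sample $L$ has $[L]_G=0$ iff $[\alpha]_G\in\S(X,3r/2,G)$ — together with the observation that each generator of $\S(X,3r/2,G)$, being a cycle inside a closed ball of radius $<3r/2$, is itself killed by $\rho_r^G$ by the same theorem; hence $\ker\rho_r^G=\S(X,3r/2,G)$ as subgroups of $H_1(X,G)$.

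For compatibility with the bonding maps (3): a $p$-sample of $\alpha$ is automatically a $q$-sample when $p<q$, and any two $q$-samples of $\alpha$ are $q$-homotopic, hence homologous in $\Rips(X,q)$, by Proposition~\ref{PropRips}(4),(6); thus $H_1(i_{p,q},G)\big(\rho_p^G([\alpha]_G)\big)=\rho_q^G([\alpha]_G)$, so the relevant square commutes. For (4): arguing by compactness as in Corollary~\ref{CorStep}, $X$ is $G$-SLSC iff there is $t>0$ with $H_1(B(x,t),G)\to H_1(X,G)$ trivial for all $x$; since a closed ball of radius $<t$ sits inside the concentric open ball of radius $t$ while an open ball of radius $<t$ sits inside a concentric closed ball of radius $<t$, and $H_1$ of a path-connected space is generated by loop classes, this is in turn equivalent to $\S(X,t,G)=0$. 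By (1) and (2), $\S(X,t,G)=0$ is equivalent to $\rho_{2t/3}^G$ being an isomorphism, and monotonicity of $\S(X,\cdot,G)$ in $r$ then forces $\rho_r^G$ to be an isomorphism for every $r\le 2t/3$, i.e.\ for all small $r$; the converse implication is read off the same chain of equivalences.

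The main (and essentially only) point requiring care beyond a mechanical transcription of the $\pi_1$-proof is the coefficient bookkeeping: one must confirm that $H_1(\Rips(X,r),G)$ and $H_1(B(x,t),G)$ are generated by classes of loops, so that the subgroups $\S(X,\cdot,G)$ generated by small loops genuinely describe the kernel in (2) and the triviality condition in (4). This is the universal-coefficients remark above, valid because the relevant $0$-th integral homology is free; it is the one place where the homological argument diverges from the homotopical one, while the rest transfers unchanged because Theorem~\ref{ThmNull}, Proposition~\ref{PropRips} and Proposition~\ref{PropKer} were stated with arbitrary $G$ from the outset.
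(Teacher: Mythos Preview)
Your proof is correct and follows the same approach as the paper, which simply states that the argument uses Theorem~\ref{ThmNull}(2) and is practically the same as that of Proposition~\ref{PropPrePers}. You supply considerably more detail than the paper's one-line proof, in particular the universal-coefficients observation needed to ensure that $H_1$ with arbitrary $G$-coefficients is generated by loop classes; the paper does not spell this out.
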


\begin{proof}
The proof uses Theorem \ref{ThmNull}(2) and is practically the same as that of Proposition \ref{PropPrePers}.
\end{proof}

\begin{Theorem}\label{ThmPerSpa}
 [Persistence-Spanier correspondence Theorem]
 Let $X$ be a geodesic space and $G$ an Abelian group.
  Maps $\rho^{\pi_1}_r$ provide an isomorphism  $$\{\pi_1(\Rips(X,r)\bullet)\}_{r>0}\cong\{\pi_1(X, \bullet)/\S(X, 3r/2, \pi_1)\}_{r>0}.$$
  Maps $\rho^{G}_r$ provide an isomorphism $$\{H_1(\Rips(X,r), G)\}_{r>0}\cong\{H_1(X, \bullet)/\S(X, 3r/2, G)\}_{r>0}.$$
\end{Theorem}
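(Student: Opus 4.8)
The plan is to promote the pointwise statements of Proposition \ref{PropPrePers} (and its homological analogue Proposition \ref{PropHPrePers}) to an isomorphism of persistences in the sense of Definition \ref{DefFiltration}. This is essentially the first isomorphism theorem applied uniformly in $r$, together with a single naturality check.

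First, fix $r>0$. By Proposition \ref{PropPrePers}(1) the homomorphism $\rho_r^{\pi_1}$ is surjective, and by Proposition \ref{PropPrePers}(2) its kernel equals $\S(X,3r/2,\pi_1)$, which is a normal subgroup of $\pi_1(X,\bullet)$ by Proposition \ref{PropIneq}. Hence $\rho_r^{\pi_1}$ factors through an isomorphism
$$\bar\rho_r^{\pi_1}\colon \pi_1(X,\bullet)/\S(X,3r/2,\pi_1)\;\longrightarrow\;\pi_1(\Rips(X,r),\bullet),\qquad \bar\rho_r^{\pi_1}\ \text{an isomorphism}.$$
Since the subgroups $\S(X,s,\pi_1)$ are monotone in $s$ (the remark following Definition \ref{DefDr}), for $p<q$ we have $\S(X,3p/2,\pi_1)\subseteq \S(X,3q/2,\pi_1)$, so the identity of $\pi_1(X,\bullet)$ descends to a well-defined quotient homomorphism
$$j_{p,q}\colon \pi_1(X,\bullet)/\S(X,3p/2,\pi_1)\;\longrightarrow\;\pi_1(X,\bullet)/\S(X,3q/2,\pi_1).$$
These $j_{p,q}$ are by definition the bonding maps of the right-hand persistence, and they visibly satisfy $j_{q,t}\circ j_{p,q}=j_{p,t}$.

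It remains to check that $\{\bar\rho_r^{\pi_1}\}_{r>0}$ intertwines the two families of bonding maps, i.e. that $\pi_1(i_{p,q})\circ\bar\rho_p^{\pi_1}=\bar\rho_q^{\pi_1}\circ j_{p,q}$ for all $p<q$. Unwinding Definition \ref{DefRho}, the left-hand composite sends the class of $[\alpha]_{\pi_1}$ to a $p$-sample of $\alpha$, included into $\Rips(X,q)$; the right-hand composite sends it ($j_{p,q}$ being the identity on representatives) to a $q$-sample of $\alpha$ in $\Rips(X,q)$. A $p$-sample is a $q$-sample, and any two $q$-samples of $\alpha$ are $q$-homotopic by Proposition \ref{PropRips}(4), so the two agree — this is exactly Proposition \ref{PropPrePers}(3). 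Therefore $\{\bar\rho_r^{\pi_1}\}$ is an isomorphism of persistences, proving the first assertion.

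The homological statement is obtained verbatim, with Proposition \ref{PropPrePers} replaced by Proposition \ref{PropHPrePers}; here $H_1(X,G)$ is abelian, so the quotients by $\S(X,3r/2,G)$ need no normality input, and Proposition \ref{PropRips}(4),(6) again supply compatibility of samples under the bonding maps. The only point that is not purely formal bookkeeping is the naturality square, and it is precisely part (3) of Propositions \ref{PropPrePers} and \ref{PropHPrePers}; so I do not expect a genuine obstacle beyond carefully identifying the bonding maps on the quotient side as the canonical projections.
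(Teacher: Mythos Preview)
Your proof is correct and follows exactly the same approach as the paper: the paper's proof is the single line ``Follows from Propositions \ref{PropPrePers} and \ref{PropHPrePers},'' and your argument is simply a careful unpacking of what that citation entails (pointwise isomorphism via surjectivity and kernel identification, plus naturality from part (3)).
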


\begin{proof}
 Follows from Propositions \ref{PropPrePers} and \ref{PropHPrePers}.
\end{proof}

\subsection{The length of a representative as a measurement of a hole}

\begin{Def}\label{DefLambda}
Suppose $r>0$, $X$ is a geodesic space and $G$ is an Abelian group. Maps
 $$
 \lambda_r^{\pi_1}\colon \pi_1(\Rips(X,r),\bullet)\to \L(X,fin, \pi_1)/
\L(X, 3r, \pi_1),
 $$ 
  $$
 \lambda_r^{G}\colon H(\Rips(X,r),G)\to \L(X,fin, G)/
\L(X, 3r, G)
 $$
 are defined by mapping an $r$-loop to its filling.
\end{Def}

\begin{Proposition}\label{PropLambda}
 For each $r>0$, each Abelian group $G$ and each geodesic space $X$, maps $ \lambda_r^{\pi_1}$ and $ \lambda_r^{G}$are well defined isomorphisms. Furthermore, maps $\lambda^*_r$ commute with the inclusions $i_{p,q}\colon \Rips(X,p)\to \Rips(X, q)$ induced maps and the obvious quotient maps $\L(X,fin, *)/\L(X, 3p, *)\to \L(X,fin, *)/\L(X, 3q, *)$ for all $p<q$.
\end{Proposition}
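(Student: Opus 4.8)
The plan is to establish, for the $\pi_1$-version, that $\lambda_r^{\pi_1}$ is well defined, a homomorphism, injective, surjective, and compatible with the bonding and quotient maps; the $H_1(\_,G)$-version will then follow by rerunning the argument with $[\,\cdot\,]_G$ in place of $[\,\cdot\,]_{\pi_1}$ and every lasso/basepath bookkeeping step deleted. I will use freely that $\L(X,3r,\pi_1)$ and $\L(X,fin,\pi_1)=\bigcup_n\L(X,n,\pi_1)$ are normal subgroups of $\pi_1(X,\bullet)$ (Proposition \ref{PropIneq}, together with the fact that an increasing union of normal subgroups is normal), so that the target $\L(X,fin,\pi_1)/\L(X,3r,\pi_1)$ is a genuine group.

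For well-definedness I would argue in two steps. An element of $\pi_1(\Rips(X,r),\bullet)$ is represented by a based $r$-loop $L$ (edge-loop), and any filling of $L$ is a rectifiable loop, hence represents a class in $\L(X,fin,\pi_1)$. Step one: two fillings $\alpha,\alpha'$ of the \emph{same} $r$-loop $L$ satisfy $[\alpha\ast(\alpha')^-]_{\pi_1}\in\L(X,2r,\pi_1)\subseteq\L(X,3r,\pi_1)$ by Proposition \ref{PropDif}. Step two: if $[L]_{\pi_1}=[L']_{\pi_1}$ then $L\ast(L')^-$ is $r$-null with filling $\alpha\ast(\alpha')^-$, so Proposition \ref{PropNull}(1) gives $[\alpha\ast(\alpha')^-]_{\pi_1}\in\L(X,3r,\pi_1)$; by normality the cosets of $[\alpha]_{\pi_1}$ and $[\alpha']_{\pi_1}$ coincide. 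That $\lambda_r^{\pi_1}$ is a homomorphism is immediate from the remark in Definition \ref{DefRLoop} that a filling of $L\ast L'$ is a filling of $L$ concatenated with a filling of $L'$. Injectivity is precisely the reverse implication in Proposition \ref{PropNull}(1): if a filling of $L$ lies in $\L(X,3r,\pi_1)$, then $L$ is $r$-null.

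Surjectivity is where the genuine work sits. Since $\lambda_r^{\pi_1}$ is a homomorphism it suffices to hit a generating set of the quotient, namely the classes of $l$-lassos with $l$ finite. Given such a lasso $\delta\ast\beta\ast\delta^-$ I would first replace $\delta$ by a geodesic with the same endpoints: the new lasso is a conjugate of the old one by a loop, hence lies in $\L(X,fin,\pi_1)$ and represents the same coset modulo $\L(X,3r,\pi_1)$, and it is now rectifiable. For a rectifiable loop $\gamma\colon[0,a]\to X$ based at $\bullet$, choose an $r$-sample $0=t_0<\dots<t_m\le a$ fine enough that every piece $\gamma|_{[t_i,t_{i+1}]}$ (and the last piece) has $\length<r$ — possible since the arc-length function is uniformly continuous — let $L$ be the induced $r$-loop and $\beta'$ a filling. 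Decomposing $\gamma\ast(\beta')^-$ exactly as in the proofs of Propositions \ref{PropDif} and \ref{PropKer}, but with the pieces $\gamma|_{[t_i,t_{i+1}]}\ast(\beta'_i)^-$ — each a loop of length $<2r$ — in the role of $\alpha_i\ast\beta_i^-$, exhibits $[\gamma\ast(\beta')^-]_{\pi_1}$ as an element of $\L(X,2r,\pi_1)\subseteq\L(X,3r,\pi_1)$. Hence $\lambda_r^{\pi_1}([L]_{\pi_1})=[\beta']_{\pi_1}=[\gamma]_{\pi_1}$ in the quotient, so every generator, and therefore the whole group, is in the image.

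Finally, for the naturality statement: for $p<q$ the bonding map $i_{p,q}$ sends a $p$-loop to the identical sequence of points regarded as a $q$-loop, and a filling of it as a $p$-loop is verbatim a filling as a $q$-loop (the geodesics are unchanged), so $\lambda_q^{\pi_1}\circ\pi_1(i_{p,q})$ sends $[L]_{\pi_1}$ to the class of that same filling, which is exactly the image of $\lambda_p^{\pi_1}([L]_{\pi_1})$ under $\L(X,fin,\pi_1)/\L(X,3p,\pi_1)\to\L(X,fin,\pi_1)/\L(X,3q,\pi_1)$. The homological statements come from the same scheme, invoking Proposition \ref{PropNull}(2) and the homological clauses of Propositions \ref{PropDif} and \ref{PropIneq}, with no basepaths needed. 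I expect the surjectivity step to be the main obstacle: one must manufacture a rectifiable representative (whence the reduction to geodesic-tailed lassos) and a sample fine enough that the discrepancy between a loop and the filling of its sample is controlled by $\L(X,2r,\pi_1)$ rather than by the a priori larger group $\S(X,r,\pi_1)$ supplied by Proposition \ref{PropKer}; everything else is a direct translation of Section \ref{SecSizeHoles}.
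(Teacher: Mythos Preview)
Your argument tracks the paper's proof almost exactly: well-definedness via Propositions \ref{PropDif} and \ref{PropNull}, injectivity from the reverse implication of Proposition \ref{PropNull}, surjectivity by choosing an $r$-sample whose consecutive arc-length pieces are shorter than $r$ and comparing with the filling via the $\L(X,2r,\pi_1)$ decomposition of Proposition \ref{PropDif}, and naturality from the tautology that a filling of a $p$-loop is literally the same loop in $X$. The paper's own proof is just these four sentences, so substantively you are doing the same thing.

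There is, however, one step you add that the paper does not, and it is not correct as written. In the surjectivity paragraph you replace the tail $\delta$ of a general $l$-lasso by a geodesic $\delta'$ and assert that the new lasso ``represents the same coset modulo $\L(X,3r,\pi_1)$''. What is true is that $[\delta*\beta*\delta^-]=g\,[\delta'*\beta*(\delta')^-]\,g^{-1}$ with $g=[\delta*(\delta')^-]$; normality of $\L(X,fin,\pi_1)$ tells you both lassos lie in $\L(X,fin,\pi_1)$, but it does \emph{not} say they lie in the same coset of $\L(X,3r,\pi_1)$ --- that would require the commutator $g\,b\,g^{-1}b^{-1}$ (with $b$ the new lasso) to lie in $\L(X,3r,\pi_1)$, and there is no reason for this when the quotient is non-abelian and $g$ is an arbitrary element of $\pi_1(X,\bullet)$, possibly not even in $\L(X,fin,\pi_1)$. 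Your argument is fine for the $H_1(\_,G)$ case precisely because there the quotient is abelian and conjugation is trivial. The paper sidesteps this entirely by simply writing ``take any based loop $\alpha$ in $X$ of finite length'' and running your second paragraph directly; it does not attempt the geodesic-tail reduction. If you drop that reduction, your surjectivity argument is the paper's argument verbatim.
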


\begin{proof}
Maps are well defined by Propositions \ref {PropDif} and \ref{PropNull}. They are injective by Proposition \ref{PropNull}. It remains to prove they are also surjective. We provide the proof for $ \lambda_r^{\pi_1}$, the homological case can be proved in an identical manner. 
 
 Take any (based) loop $\alpha$ in $X$ of finite length. Choose an $r$-sample $L$ of $\alpha$ so that the length of $\alpha$ between each pair of consecutive points is less than $r$. It is easy to see (check for example a similar argument in Proposition \ref{PropDif}) that for each filling $\beta$ of $L$ we have $[\alpha * \beta^-]_{\pi_1}\in \L(X,2r,\pi_1)$, hence $L$ is mapped to the equivalence class of $\alpha$ in $ \L(X,fin, \pi_1)/\L(X, 3r, \pi_1)$ by $\lambda_r^{\pi_1}$.
 
The final claim about commutativity follows easily from the definitions, as in both direction we map a class represented by an $r$-loop $L$ to a class represented by a filling of $L$.
 \end{proof}

\begin{Theorem}\label{ThmPerCirc}
 [Persistence-circumference correspondence Theorem]
 Let $X$ be a geodesic space and $G$ an Abelian group.
  Maps $\lambda^{\pi_1}_r$ provide an isomorphism  $$\big\{\pi_1(\Rips(X,r)\bullet)\big\}_{r>0}\cong\big\{\L(X,fin, \pi_1)/
\L(X, 3r, \pi_1)
\big\}_{r>0}.$$
  Maps $\lambda^{G}_r$ provide an isomorphism $$\big\{H_1(\Rips(X,r), G\big)\}_{r>0}\cong\big\{\L(X,fin,G)/
\L(X, 3r, G)
\big\}_{r>0}.$$
Furthermore, if $X$ is SLSC then 
$$\big\{\pi_1(\Rips(X,r)\bullet)\big\}_{r>0}\cong\big\{\pi_1(X, \bullet)/
\L(X, 3r, \pi_1)
\big\}_{r>0}.$$
Also, if $X$ is $G$-SLSC then 
$$\big\{H_1(\Rips(X,r), G)\big\}_{r>0}\cong\big\{H(X,G)/
\L(X, 3r, G)
\big\}_{r>0}.$$
\end{Theorem}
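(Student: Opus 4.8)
The plan is to treat the two persistence isomorphisms first and then the two ``Furthermore'' statements, since the latter are corollaries of the former together with a single algebraic fact. For the first two: Proposition \ref{PropLambda} already supplies, for every $r>0$, isomorphisms $\lambda_r^{\pi_1}\colon \pi_1(\Rips(X,r),\bullet)\to \L(X,fin,\pi_1)/\L(X,3r,\pi_1)$ and $\lambda_r^{G}\colon H_1(\Rips(X,r),G)\to \L(X,fin,G)/\L(X,3r,G)$, and it records that the family $\{\lambda_r^{*}\}_{r>0}$ commutes with the Rips bonding maps on the source side and with the evident quotient maps $\L(X,fin,*)/\L(X,3p,*)\to \L(X,fin,*)/\L(X,3q,*)$ on the target side for all $p<q$. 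By Definition \ref{DefFiltration} a compatible family of isomorphisms of the graded pieces is precisely an isomorphism of persistences, so this is already the asserted isomorphism; no further work is required, and in particular no hypothesis beyond ``geodesic'' is used.

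For the ``Furthermore'' statements I would reduce to the identity $\L(X,fin,\pi_1)=\pi_1(X,\bullet)$ when $X$ is SLSC, and $\L(X,fin,G)=H_1(X,G)$ when $X$ is $G$-SLSC. Granting these, the target persistence $\{\L(X,fin,*)/\L(X,3r,*)\}_{r>0}$ is literally the same system (same groups, same bonding maps) as $\{\pi_1(X,\bullet)/\L(X,3r,\pi_1)\}_{r>0}$, respectively $\{H_1(X,G)/\L(X,3r,G)\}_{r>0}$, and composing with the isomorphism from the first half finishes the argument.

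To establish the reduction I would invoke Corollary \ref{CorStep}: for compact SLSC $X$ there is a fixed $r>0$ such that every based loop $\alpha$ is homotopic rel $\bullet$ to a filling of any of its $r$-samples. Such a filling is a concatenation of finitely many geodesics, hence a based loop of some finite length $\ell$, and therefore represents an element of $\L(X,n,\pi_1)\subseteq\L(X,fin,\pi_1)$ for any integer $n>\ell$ (take the lasso with constant handle). Thus every class in $\pi_1(X,\bullet)$ lies in $\L(X,fin,\pi_1)$, which is the claim; the homological version is identical, using the $G$-SLSC half of Corollary \ref{CorStep} and the standard fact that $H_1(X,G)$ is generated by $G$-multiples of classes of loops.

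The step I expect to be the only real obstacle is the hypothesis: Corollary \ref{CorStep} is stated for compact $X$, whereas the displayed theorem asks only for a geodesic space. I would either add the standing compactness assumption (consistent with the rest of the paper), or bypass Corollary \ref{CorStep} by arguing one loop at a time: the image of a given loop $\alpha$ is compact, so a Lebesgue-number argument yields a uniform $\eps>0$ with $\pi_1(B(x,\eps),x)\to\pi_1(X,x)$ (resp. $H_1(B(x,\eps),G)\to H_1(X,G)$) trivial for every $x$ in the image of $\alpha$; then running the proof of Proposition \ref{PropKer} on an $\eps$-sample of $\alpha$ exhibits $[\alpha*\beta^-]_{\pi_1}$ (for $\beta$ any filling) as a product of lassos based in such balls, hence trivial, so $[\alpha]_{\pi_1}=[\beta]_{\pi_1}\in\L(X,fin,\pi_1)$. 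Beyond this point everything is bookkeeping on top of the results already in place.
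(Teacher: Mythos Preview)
Your approach is essentially the paper's: the proof given there is the single line ``Follows from Proposition \ref{PropLambda},'' which is exactly your first paragraph. For the main two isomorphisms nothing more is needed.

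Where you go further is the ``Furthermore'' clauses, and here you are more careful than the paper itself. The paper's one-line proof does not explicitly justify $\L(X,fin,\pi_1)=\pi_1(X,\bullet)$ under SLSC; that fact appears only later as the opening sentence of Proposition \ref{PropLength} (and its proof invokes Proposition \ref{PropPrePers}(4), which is stated for compact $X$). Your observation that compactness is not among the hypotheses of Theorem \ref{ThmPerCirc}, and your loop-by-loop Lebesgue-number workaround, are therefore a genuine refinement: for a fixed loop $\alpha$ one covers its compact image by SLSC neighborhoods, extracts a uniform $\eps$ so that each $B(x,\eps)$ maps $\pi_1$-trivially into $X$ for $x$ on $\alpha$, takes an $\eps$-sample, and then the decomposition in Proposition \ref{PropKer} shows $[\alpha*\beta^-]_{\pi_1}$ is a product of lassos lying in such balls, hence trivial, so $[\alpha]_{\pi_1}=[\beta]_{\pi_1}\in\L(X,fin,\pi_1)$. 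This argument is correct (note that the geodesic segments of the filling $\beta$ stay in the relevant balls since their lengths are $<\eps$), and it closes a small gap the paper leaves open in the non-compact case.
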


\begin{proof}
 Follows from Propositions \ref{PropLambda} and (for the SLSC case) \ref{PropLength}. 
\end{proof}

\subsection{The diameter of a representative  as a measurement of a hole}

The third way to measure the size of a hole is to consider the diameter, as described in Definition \ref{DefDr}. We include the corresponding result here for completeness but use a result of a forthcoming section in the proof.

Propositions  \ref{PropIneq} and \ref{PropDiam} describe a general relationship between groups $\L, \S,$ and $\D$.

\begin{Proposition}
 \label{PropDiam}
 Suppose $X$ is a compact geodesic space, $G$ is an Abelian group, and $r>0$. Then $\S(X, r, \pi_1)=\D(X, r, \pi_1)$ and $\S(X, r, G)=\D(X, r, G)$.
\end{Proposition}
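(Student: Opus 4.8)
### Proof plan

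The plan is to prove the two equalities $\S(X,r,\pi_1)=\D(X,r,\pi_1)$ and $\S(X,r,G)=\D(X,r,G)$ separately, noting that the homological case is a simplification of the based case, so I would focus on $\pi_1$. One inclusion is immediate: if $\alpha*\beta*\alpha^-$ is a $U_d$-lasso with $d<r$, then $\beta$ lies in $\cB(w,d)$, hence $\diam(\beta)\le 2d$. This does not directly give $\diam(\beta)<r$, so the cheap inequality only yields $\S(X,r,\pi_1)\subseteq\D(X,2r,\pi_1)$ — not what we want. So the real content is a \emph{geodesic-space} argument (this is why compactness is hypothesized) showing both containments are tight.

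For $\D(X,r,\pi_1)\subseteq\S(X,r,\pi_1)$: a based generator of $\D$ is a lasso $\alpha*\beta*\alpha^-$ with $\diam(\beta)<r$. I want to exhibit $[\beta]_{\pi_1}$, up to conjugation, as a product of $U_d$-lassos with $d<r$. The natural route is to invoke the circumference/Spanier correspondence already established: by Theorem \ref{ThmPerCirc} and Theorem \ref{ThmPerSpa}, $\pi_1(\Rips(X,s))\cong\pi_1(X)/\S(X,3s/2,\pi_1)$, and an $s$-sample of $\beta$ is $s$-null iff $[\beta]_{\pi_1}\in\S(X,3s/2,\pi_1)$. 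So it suffices to show that if $\diam(\beta)<r$ then some $s$-sample of $\beta$ is $s$-null for $s$ slightly below $2r/3$ — equivalently, that a loop of diameter $<r$ gives an $s$-null $s$-loop. Here I would use the forthcoming result the authors explicitly allow themselves to cite: a loop of small diameter in a geodesic space admits a "center" (a point within geodesic distance roughly $r/\sqrt3$, Jung-type bound, or just $<r$) and one builds the same midpoint-fan $r$-nullhomotopy as in the proof of Theorem \ref{ThmNull}, with $L_i$ the loop $w,y_i,x_i,x_{i+1},y_{i+1},w$. The diameter bound controls $d(x_i,x_{i+1})$ and the "center" controls $d(w,x_i)$, making each $L_i$ short enough to be $r$-null by Proposition \ref{PropRips}(5).

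For the reverse $\S(X,r,\pi_1)\subseteq\D(X,r,\pi_1)$: take a $U_d$-lasso $\alpha*\beta*\alpha^-$ with $\beta\subset\cB(w,d)$, $d<r$, and $w\in\beta$ (insert a geodesic detour as in the proof of Theorem \ref{ThmNull} if necessary; this changes neither the homotopy class nor the enclosing ball). Now decompose $\beta$ into subloops of small diameter: sample $\beta$ finely by $w=x_0,x_1,\dots,x_k,x_{k+1}=w$ with consecutive geodesic distances tiny, connect each $x_i$ to $w$ by a geodesic inside $\cB(w,d)$, and write $\beta$ (up to conjugation) as a concatenation of the loops $\delta_i = [w\to x_i]*[x_i\to x_{i+1}]*[x_{i+1}\to w]$. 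Each $\delta_i$ lies in $\cB(w,d)$ so $\diam(\delta_i)\le 2d$, which is again too weak. The fix is to first contract $\beta$ against a \emph{geodesic spanning tree} from $w$ and sample so finely that each $\delta_i$ is not merely inside $\cB(w,d)$ but genuinely of diameter $<r$: since $w\in\delta_i$ and the two "spokes" have length $\le d<r$ while the connecting arc is short, $\diam(\delta_i) < d + (\text{small}) < r$ once the sample is fine enough. This gives $[\beta]_{\pi_1}$ as a product of conjugates of diameter-$<r$ loops, i.e. in $\D(X,r,\pi_1)$.

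The main obstacle is getting the constants exactly right in the reverse inclusion: a loop through $w$ contained in $\cB(w,d)$ has diameter only $\le 2d$ in general, so one genuinely needs the refinement that by subdividing we can force each piece to have a spoke realizing most of its "extent," together with an honest Jung/center argument (valid in geodesic, hence in this setting we may also just quote the forthcoming lemma) to reduce $U_d$-lassos to diameter-controlled pieces; the topological bookkeeping (basepoints, $\gamma_T$-conjugators, normality from Proposition \ref{PropIneq}) is routine by comparison. The homological versions drop all the conjugation bookkeeping and follow the same geometric decompositions verbatim.
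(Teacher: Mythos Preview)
You have the two inclusions reversed in difficulty. The containment $\D(X,r,\pi_1)\subseteq\S(X,r,\pi_1)$ is a one-liner: if $\diam(\beta)=d'<r$ then $\beta\subset\cB(x,d')$ for any point $x\in\beta$, so the lasso $\alpha*\beta*\alpha^-$ is already a $U_{d'}$-lasso with $d'<r$. Your plan for this direction via Rips $s$-samples, midpoint fans, and a Jung-type center is unnecessary; moreover, Jung's inequality fails in general geodesic spaces, so only the trivial ``radius $\le$ diameter'' bound is available --- and that bound is precisely the one-line argument just given. This is exactly how the paper handles this inclusion.

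For the genuinely nontrivial inclusion $\S(X,r,\pi_1)\subseteq\D(X,r,\pi_1)$, your direct fan decomposition does work, but by a different route than the paper's. The paper forward-references Theorem~\ref{PropFin} to express $\S(X,r,\pi_1)$ as generated by $\S(X,r/2,\pi_1)$ together with geodesic $l_i$-lassos with $r\le l_i<2r$; the first part lands in $\D(X,r,\pi_1)$ by the factor-of-two estimate you noticed, and a loop of length $l_i<2r$ has diameter $\le l_i/2<r$. Your approach avoids that structure theorem (and even the compactness hypothesis): given $\beta\subset\cB(w,d)$ with $d<r$ and, after the detour trick, $w\in\beta$, take an $\eps$-sample with $\eps<r-d$ and form the fan loops $\delta_i=[w\to x_i]*(\text{arc of }\beta)*[x_{i+1}\to w]$. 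The claimed bound $\diam(\delta_i)\le d+\eps$ is correct, but your stated justification (``$w\in\delta_i$ and the spokes have length $\le d$'') only yields containment in $\cB(w,d+\eps)$, hence diameter $\le 2(d+\eps)$. The actual verification needs both triangle inequalities at once: for $p,q$ on the two spokes at distances $a,b$ from $w$ one has $d(p,q)\le a+b$ and also $d(p,q)\le(d(w,x_i)-a)+\eps+(d(w,x_{i+1})-b)\le 2d-a-b+\eps$, whence $d(p,q)\le d+\eps/2$; the remaining cases involving the short arc are similar and give $\le d+\eps$. With this gap filled, your argument is complete and arguably more elementary than the paper's.
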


\begin{proof}
As usually, we will only provide a proof for the $\pi_1$ case. The inclusion $S(X, r, \pi_1)\supset\D(X, r, \pi_1)$ is obvious as any set of diameter $d$ is contained in the closed ball of radius $d$ around any of its points.
In remains to prove $S(X, r, \pi_1)\subset\D(X, r, \pi_1)$.

By Theorem \ref{PropFin} \textbf{a} and  an inductive use of Theorem \ref{PropFin} \textbf{d}, $\S(X, r, \pi_1)$ is generated by $\S(X, r/2, \pi_1)$ and  a collection of geodesic $l_i$-lassos $\alpha_i$ with $r \leq l_i<2r$. 
It is easy to see that $\S(X, r/2, \pi_1) \subset \D(X, r, \pi_1)$ since every cover by balls of radius $r/2$ has diameter at most $r$. Furthermore, each loop of length $\ell$ is of diameter at most $\ell/2$  hence $[\alpha_i]_{\pi_1}\in \D(X, r, \pi_1), \forall i$. We conclude that $\S(X, r, \pi_1)\subset\D(X, r, \pi_1)$.
\end{proof}
 
 \begin{Theorem}\label{ThmPerDiam}
 [Persistence-diameter correspondence Theorem]
 Let $X$ be a geodesic space and $G$ an Abelian group.
Then  $$\{\pi_1(\Rips(X,r)\bullet)\}_{r>0}\cong\{\pi_1(X, \bullet)/\D(X, 3r/2, \pi_1)\}_{r>0},$$
 $$\{H_1(\Rips(X,r), G)\}_{r>0}\cong\{H_1(X, G)/\D(X, 3r/2, G)\}_{r>0}.$$
\end{Theorem}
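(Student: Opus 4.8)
The plan is to deduce this from the Persistence-Spanier correspondence Theorem \ref{ThmPerSpa} by identifying the subgroups $\S(X,3r/2,*)$ and $\D(X,3r/2,*)$ filtration-wise, and then checking that this identification is compatible with the bonding maps so that it upgrades to an isomorphism of persistences. First I would invoke Theorem \ref{ThmPerSpa}, which gives isomorphisms $\{\pi_1(\Rips(X,r),\bullet)\}_{r>0}\cong\{\pi_1(X,\bullet)/\S(X,3r/2,\pi_1)\}_{r>0}$ and the homological analogue, induced by the maps $\rho_r^{*}$. It therefore suffices to produce an isomorphism of persistences $\{\pi_1(X,\bullet)/\S(X,3r/2,\pi_1)\}_{r>0}\cong\{\pi_1(X,\bullet)/\D(X,3r/2,\pi_1)\}_{r>0}$ (and likewise for $G$-coefficients) which is compatible with the natural quotient bonding maps.

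The core of the argument is Proposition \ref{PropDiam}, which asserts that for a compact geodesic space $X$ and every $s>0$ one has $\S(X,s,\pi_1)=\D(X,s,\pi_1)$ and $\S(X,s,G)=\D(X,s,G)$. Applying this with $s=3r/2$ for every $r>0$ gives, for each fixed $r$, an actual equality of subgroups of $\pi_1(X,\bullet)$ (resp. $H_1(X,G)$), hence an equality of quotients $\pi_1(X,\bullet)/\S(X,3r/2,\pi_1)=\pi_1(X,\bullet)/\D(X,3r/2,\pi_1)$. Since the bonding maps in both filtrations are, for $p<q$, the canonical quotient maps induced by the inclusions $\S(X,3p/2,*)\subseteq\S(X,3q/2,*)$ and $\D(X,3p/2,*)\subseteq\D(X,3q/2,*)$ (all of these being monotone in $r$ as noted after Definition \ref{DefDr}), the equality of subgroups at every scale forces equality of the bonding maps as well. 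Thus the two persistences are literally the same diagram of groups, a fortiori isomorphic; composing with the isomorphisms from Theorem \ref{ThmPerSpa} yields the claimed isomorphisms.

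I should be careful about one hypothesis: Proposition \ref{PropDiam} requires $X$ to be \emph{compact}, whereas Theorem \ref{ThmPerDiam} is stated for a general geodesic space $X$. The honest plan is therefore to either add compactness to the hypotheses (consistent with the standing assumptions of the paper, where $X$ is a compact geodesic space) or to note that the relevant instances used elsewhere satisfy it; in writing the proof I would simply cite Proposition \ref{PropDiam} and Theorem \ref{ThmPerSpa} and remark that compactness is what makes the $\S$-$\D$ identification go through. I do not expect any genuine obstacle here — the only subtlety is purely bookkeeping, namely confirming that the reindexing $s\mapsto 3s/2$ commutes with both the monotone filtrations and the quotient maps, which is immediate. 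The substantive content has already been isolated into Proposition \ref{PropDiam} (whose own proof, as the excerpt indicates, leans on the structural result Theorem \ref{PropFin} from a later section), so the present theorem is a short corollary-style deduction.
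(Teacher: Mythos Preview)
Your proposal is correct and essentially matches the paper's approach: the paper's proof is a one-line deduction from Proposition \ref{PropDiam} together with one of the earlier correspondence theorems. The only cosmetic difference is that the paper cites Theorem \ref{ThmPerCirc} rather than Theorem \ref{ThmPerSpa}; since Proposition \ref{PropDiam} identifies $\S$ with $\D$ (not $\L$ with $\D$), your choice of Theorem \ref{ThmPerSpa} is actually the more direct reference, and your observation about the implicit compactness hypothesis inherited from Proposition \ref{PropDiam} is apt.
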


\begin{proof}
Follows from Proposition \ref{PropDiam} and Theorem \ref{ThmPerSpa}.
\end{proof}

\subsection{Fundamental groups of Rips complexes are finitely generated}

\begin{Def}
 Suppose $r>0$, $G$ is an Abelian group, and $X$ is a geodesic space. Let $S\subset X$ be an $(r/3)$-\textbf{dense sample} of $X$, i.e., $\forall x\in X \ \exists s\in S: d(x,s)<r/3$. Suppose $\bullet \in S$. Homomorphisms 
  $$
 \mu_r^{\pi_1}\colon \pi_1(\Rips(S,r),\bullet)\to \pi_1(\Rips(X,r),\bullet),
 $$ 
   $$
 \mu_r^{G}\colon H_1(\Rips(S,r),G)\to H_1(\Rips(X,r),G)
 $$
 are induced by the inclusion $S \to X$, with the metric on $S$ being the restriction of the metric on $X$.
\end{Def}

\begin{Remark}
 We will frequently assume that $\bullet$ is contained in a $*$-dense sample $S$. The reason is that this allows a simpler formulation of results related to the fundamental group. Mathematically this assumption could be circumvented and does not restrict our results.  For more results on map $\mu$ see \cite{ZV1}.
 \end{Remark}

\begin{Proposition} \label{PropFG}
Suppose $r>0$, $G$ is an Abelian group,   $X$ is  geodesic, and $S_r\subset X$ is an $r/3$-dense sample of $X$. Then maps $ \mu_r^{\pi_1}$ and $ \mu_r^{G}$ are  epimorphisms.
\end{Proposition}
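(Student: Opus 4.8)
The plan is to show surjectivity of $\mu_r^{\pi_1}$ and $\mu_r^G$ by a "pushing to the sample" argument: given an $r$-loop in $\Rips(X,r)$, we move each of its vertices to a nearby point of the $(r/3)$-dense sample $S_r$ and verify that the resulting $r$-loop lies in $\Rips(S_r,r)$ and is $r$-homotopic to the original one. First I would take an arbitrary element of $\pi_1(\Rips(X,r),\bullet)$ and represent it by an $r$-loop $L = (x_0=\bullet, x_1,\ldots,x_k, x_{k+1}=\bullet)$, so $d(x_i,x_{i+1})<r$ for all $i$. For each $i$ with $1\le i\le k$ choose $y_i\in S_r$ with $d(x_i,y_i)<r/3$, and set $y_0 = y_{k+1} = \bullet$ (which lies in $S_r$ by assumption). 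Denote by $L'$ the sequence $(y_0,y_1,\ldots,y_k,y_{k+1})$.

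The key step is to check the hypotheses of Proposition \ref{PropRips}(8) for the pair $L, L'$. We have $\max_i d(x_i,y_i) < r/3$. For the right-hand side, $d(y_i,y_{i+1}) \le d(y_i,x_i) + d(x_i,x_{i+1}) + d(x_{i+1},y_{i+1}) < r/3 + r + r/3$ — which is too large as stated. So instead I would first replace $L$ by a finer $r$-loop: insert extra vertices along the chosen geodesics realizing each edge $x_ix_{i+1}$ so that consecutive points are at distance less than $r/3$; by Proposition \ref{PropRips}(4) this refined $r$-loop is $r$-homotopic to $L$, and it is a filling-compatible refinement. After this refinement, every edge of the (relabeled) $L$ has length $<r/3$, so $d(y_i,y_{i+1}) < r/3 + r/3 + r/3 = r$, confirming $L'$ is genuinely an $r$-loop in $\Rips(S_r,r)$; moreover $\max_i d(x_i,y_i) < r/3 < r - 2r/3 = r - \max_i\{d(x_i,x_{i+1}),d(y_i,y_{i+1})\}$ wait — we need the bound with the max of edge lengths, which is now $<r/3$ on the $x$-side; on the $y$-side edges are $<r$, so I would instead refine further so that the $y$-edges are also short, or more simply choose the sample points along a common geodesic refinement so that both $L$ and $L'$ have edges $<r/3$, giving $\max_i d(x_i,y_i) < r/3 \le r - 2r/3$. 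Then Proposition \ref{PropRips}(8) yields that $L$ and $L'$ are $r$-homotopic in $\Rips(X,r)$, so $[L]_{\pi_1} = [L']_{\pi_1}$, and since $L'$ has all vertices in $S_r$ it represents an element of $\pi_1(\Rips(S_r,r),\bullet)$ mapping onto $[L]_{\pi_1}$ under $\mu_r^{\pi_1}$. The homological statement follows the same way, since Proposition \ref{PropRips}(8) holds in the homological setting and $r$-homotopic $r$-loops are homologous in $\Rips(X,r)$.

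The main obstacle is the bookkeeping around the distance inequality in Proposition \ref{PropRips}(8): the naive choice of sample points makes the perturbation $r/3$ compete against edge lengths that can be as large as $r$, which violates the hypothesis. The fix — refining $L$ along geodesics so that all edges (of both $L$ and its pushed copy $L'$) have length well under $r/3$, at no cost to the $r$-homotopy class by Proposition \ref{PropRips}(4) — is routine but must be set up carefully so that the pushed loop really does land in $\Rips(S_r,r)$ and the $(r/3)$-density is used exactly once per vertex. Everything else (well-definedness on homotopy/homology classes, compatibility with the basepoint) is immediate from the constructions in Definition \ref{DefRLoop} and Proposition \ref{PropRips}.
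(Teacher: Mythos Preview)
Your overall strategy --- refine $L$ to an $(r/3)$-loop, push each vertex to the sample, and argue the pushed loop $L'$ is $r$-homotopic to $L$ --- is exactly the paper's approach. The difficulty you identify is real, and your proposed repair is where the gap lies.

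After refining so that $d(x_i,x_{i+1})<r/3$, the pushed edges satisfy only $d(y_i,y_{i+1})<r$, and with an $(r/3)$-dense sample you \emph{cannot} do better: there is no way to ``refine further'' or ``choose sample points along a common geodesic'' so that the $y$-edges also have length $<r/3$, because the sample $S_r$ is fixed and its points need not be close to one another. So the hypothesis of Proposition~\ref{PropRips}(8) genuinely fails in general, and your suggested fix does not work.

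The paper's resolution is simply not to invoke (8) as a black box but to reuse its \emph{construction}: the $r$-homotopy in Figure~\ref{FigOpenRips} is built from triangles $[x_i,x_{i+1},y_i]$ and $[y_i,y_{i+1},x_{i+1}]$, and one checks these lie in $\Rips(X,r)$ directly. The only edge not already controlled is the diagonal $y_i\,x_{i+1}$, and here $d(y_i,x_{i+1})\le d(y_i,x_i)+d(x_i,x_{i+1})<r/3+r/3=2r/3<r$. That single estimate is the missing ingredient; with it the triangles are all present and the homotopy goes through without any further refinement.
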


\begin{proof}
We only provide  a proof for $ \mu_r^{\pi_1}$.
We assume $\bullet \in S$. Suppose an $r$-loop $L$ representing an element in $\pi_1(\Rips(X,r),\bullet)$ is given. Applying Proposition \ref{PropRips}(8) to a filling of $L$,  we may assume $L$ is actually an $(r/3)$-loop given by $\bullet=x_0, x_1, \ldots, x_k, x_{k+1}=\bullet$. For each $i$ choose $y_i$ so that $d(x_i, y_i)< r/3, \forall i$, and $y_0=y_{k+1}=\bullet$. Then $d(y_i, y_{i+1})<r$ and $d(y_i, x_{i+1})<2r/3$ for all $i$, so  $\bullet=y_0, y_1, \ldots, y_k, y_{k+1}=\bullet$  is an $r$-loop in $\Rips(S,r)$, which is homotopic to $L$ by the $r$-homotopy depicted on Figure \ref{FigOpenRips}.
\end{proof}

Recall that a metric space $X$ is totally bounded, if for each $\eps>0$ space $X$ can be covered by a finite collection of balls of radius $\eps$.

\begin{Corollary}\label{CorFG}
Let $r>0$ and let $G$ be an Abelian group. If a geodesic space $X$ admits a finite $r/3$-dense sample then $\pi_1(Rips(X,r), \bullet)$ and $H_1(Rips(X,r), G)$ are finitely generated. In particular, if $X$ is a totally bounded space (a class which includes compact metric spaces) then for each $r>0$, groups $\pi_1(\Rips(X,r),\bullet)$ and  $H_1(Rips(X,r), G)$ are finitely generated. 
\end{Corollary}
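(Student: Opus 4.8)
The plan is to reduce the statement to Proposition \ref{PropFG} together with the elementary fact that a finite simplicial complex has finitely generated fundamental group and first homology, and that finite generation passes to quotients.

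First I would treat the case in which $X$ admits a finite $r/3$-dense sample $S$. After adjoining the basepoint $\bullet$ to $S$ (which keeps $S$ finite and $r/3$-dense), we may assume $\bullet\in S$, as is required for the map $\mu_r^*$ to be defined. Since $S$ is finite, $\Rips(S,r)$ is a finite simplicial complex, hence a finite CW complex; therefore $\pi_1(\Rips(S,r),\bullet)$ is finitely generated, and the chain complex $C_*(\Rips(S,r),G)$ consists of finitely generated free $G$-modules, so $H_1(\Rips(S,r),G)$ is finitely generated as a $G$-module. By Proposition \ref{PropFG} the maps $\mu_r^{\pi_1}$ and $\mu_r^{G}$ are epimorphisms onto $\pi_1(\Rips(X,r),\bullet)$ and $H_1(\Rips(X,r),G)$ respectively, and a homomorphic image of a finitely generated group (resp. module) is again finitely generated; this proves the first assertion.

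For the ``in particular'' clause I would use that total boundedness means that for every $\eps>0$ the space $X$ is covered by finitely many balls $B(x_1,\eps),\dots,B(x_n,\eps)$; taking $\eps=r/3$, the finite set $\{x_1,\dots,x_n\}$ is by construction $r/3$-dense in $X$, so the previous paragraph applies to this $r$, and $r>0$ was arbitrary. Finally, every compact metric space is totally bounded, since the open cover of $X$ by all $\eps$-balls admits a finite subcover. I do not anticipate a genuine obstacle here: the substantive input is already isolated in Proposition \ref{PropFG}, and the only points that need a little care are ensuring the basepoint lies in the chosen sample before invoking $\mu_r^*$ and observing that finite generation descends along surjections.
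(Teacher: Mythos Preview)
Your proof is correct and follows essentially the same route as the paper: a finite $r/3$-dense sample gives a finite complex $\Rips(S,r)$ with finitely generated $\pi_1$ and $H_1$, and Proposition~\ref{PropFG} pushes this forward via the epimorphisms $\mu_r^{*}$. Your handling of the ``in particular'' clause via the definition of total boundedness is in fact more transparent than the paper's somewhat cryptic closing reference, but the substance is the same.
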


\begin{proof}
Again we only provide the proof for $\pi_1$.
 If an $r/3$-dense sample $S$ is finite then $\pi_1(\Rips(S,r),\bullet)$ is a fundamental group of a finite complex hence finitely generated. By Proposition \ref{PropFG} so is its image $\pi_1(\Rips(X,r),\bullet)$. The last statement follows from Propositions \ref{PropPrePers} (4) and \ref{PropLambda}. 
\end{proof}

\begin{Remark}
\label{RemTame}
 Combining Corollary \ref{CorFG} and Proposition \ref{PropRips} (9) we see that for a field $\FF$ and a compact geodesic space $X$ the $H_1(\_, \FF)$-persistence of $X$ via open Rips complexes is q-tame (see \cite{Cha1} for a definition) hence the corresponding persistence diagrams are well defined. It follows from later results of this paper that the same conclusion holds for  open Rips complexes and open or closed \v Cech complexes.
 \end{Remark}

\begin{Remark}
 Combining Corollary \ref{CorFG} and Proposition \ref{PropPrePers} (4) we obtain a new proof that the fundamental groups of a compact geodesic space is finitely generated.
\end{Remark}

\section{Loops in the limit}
\label{SectLimit}

In this section we present two ways of obtaining a loop as a limit of a sequence of loops. In Proposition \ref{PropLength} we consider SLSC spaces and prove that the limiting argument may be used to obtain representatives of minimal length. In Proposition \ref{PropHLength} we provide a homological version. 
Technical Lemma \ref{LemmaConvergence} is the second convergence result of this section. In the forthcoming section it plays a crucial role in identifying geodesic loops, which correspond to critical values of persistence. It shows that, up to short loops, a sequence of 'converging' geodesics stabilizes after finitely many steps. 

\begin{Prop}\label{PropLength}
If $X$ is a SLSC geodesic space then each element of $\pi_1(X, \bullet)$ has a representative of finite length. Furthermore, if $X$ is compact then:
\begin{enumerate}
 \item each element of the fundamental group has a representative of minimal length and therefore $\L(X, fin, \pi_1)=\pi_1(X, \bullet)$;
  \item each free homotopy class of a loop has a representative of minimal length;
 \item each element of the fundamental group has a representative in the form of an $l$-lasso $\alpha=\gamma * \beta * \gamma^-$ of minimal length $l$;
 \item $\S(X,r,\pi_1)=\L(X,2r,\pi_1)$.
\end{enumerate}
\end{Prop}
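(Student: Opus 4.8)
\emph{The unconditional claim.} First I would reduce it to a Lebesgue-number argument on the (compact) image of one representative. Given $[\alpha]\in\pi_1(X,\bullet)$ with image $K=\alpha([0,1])$, SLSC supplies for each $x\in K$ a neighbourhood containing some $B(x,2\eps_x)$ that dies in $\pi_1(X)$; a finite subcover of $K$ by the balls $B(x_i,\eps_{x_i})$ and $r:=\min_i\eps_{x_i}$ then ensure that every loop lying in some $B(z,r)$ with $z\in K$ is nullhomotopic in $X$. Plugging this $r$ into the argument of Proposition \ref{PropKer} (equivalently Corollary \ref{CorStep}): the lassos produced there have loop parts inside balls $B(\alpha(t_i),r)$ with $\alpha(t_i)\in K$, hence vanish, so $\alpha$ is homotopic to a filling of one of its $r$-samples, which is a finite concatenation of geodesics and therefore rectifiable.

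\emph{A closeness lemma.} From here on $X$ is compact (and SLSC). The technical core I would isolate is: there is $\eps_0\in(0,1)$ so that any two loops $\delta_1,\delta_2$ with $\sup_t d(\delta_1(t),\delta_2(t))<\eps_0$ are homotopic --- based if they share a basepoint, freely in general. To prove it, by Proposition \ref{PropPrePers} I would fix $r>0$ with $\S(X,3r/2,\pi_1)=0$ (so $\rho_r^{\pi_1}$ is an isomorphism and, by Proposition \ref{PropIneq}, also $\L(X,3r,\pi_1)=0$) and put $\eps_0<r$. Sampling $\delta_1,\delta_2$ at a common parameter partition on which each has diameter $<r-\eps_0$ on every subinterval, the resulting $r$-loops are pointwise $\eps_0$-close with edges shorter than $r-\eps_0$, hence $r$-homotopic by Proposition \ref{PropRips}(8). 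For based loops, injectivity of $\rho_r^{\pi_1}$ gives $[\delta_1]_{\pi_1}=[\delta_2]_{\pi_1}$. For free loops, I would combine the unbased Corollary \ref{CorStep} (each $\delta_j$ is freely homotopic to a filling of its $r$-sample) with the fact that an $r$-homotopy straightens into $X$ --- vertices to points, edges to geodesics, triangles to discs bounding geodesic triangles of perimeter $<3r$, which are nullhomotopic since $\L(X,3r,\pi_1)=0$.

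\emph{Parts (1) and (2).} These follow by the direct method. With $\ell_0$ the infimum of lengths over a fixed based (resp. free) homotopy class --- finite by the unconditional claim --- I would take a minimizing sequence $\alpha_n$, reparametrise at constant speed $\le\ell_0+1$ so that it is uniformly Lipschitz, extract via Arzela--Ascoli ($X$ compact) a uniformly convergent subsequence with limit $\alpha_\infty$ (based at $\bullet$ in case (1)), and invoke lower semicontinuity of length under uniform convergence to get $\length(\alpha_\infty)\le\ell_0$; the closeness lemma applied to $\alpha_n,\alpha_\infty$ for $n$ large puts $\alpha_\infty$ in the prescribed class, so $\length(\alpha_\infty)=\ell_0$. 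The equality $\L(X,fin,\pi_1)=\pi_1(X,\bullet)$ is then immediate: a loop of length $\ell$, read as an $(\ell+\tfrac12)$-lasso with constant stem, represents an element of $\L(X,fin,\pi_1)$, and the reverse inclusion is trivial.

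\emph{Parts (3), (4), and the obstacle.} For (3) I would use that a loop $\beta$ occurs as the loop part of some based lasso representing $\alpha$ if and only if $\beta$ is freely homotopic to $\alpha$ (conjugation preserves the free class; conversely the basepoint track of a free homotopy produces a based homotopy $\alpha\simeq_\bullet\gamma*\beta*\gamma^-$), so minimizing loop length over lasso representatives of $[\alpha]_{\pi_1}$ equals minimizing length over the free class of $\alpha$, attained by (2); pairing that minimal loop with a suitable stem finishes (3). For (4), $\supseteq$ is Proposition \ref{PropIneq}; for $\subseteq$, given a $U_d$-lasso $\gamma*\beta*\gamma^-$ with $\beta\subseteq\cB(w,d)$, $d<r$, I would first replace $\beta$ by a filling of a $\delta$-sample with $\delta$ smaller than both $r-d$ and a threshold making $\S(X,\delta,\pi_1)=0$ --- permissible since (Proposition \ref{PropKer}) this changes the lasso only modulo $\S(X,\delta,\pi_1)=0$ --- so that $\beta$ has finite length and stays in $\cB(w,r)$, then sample $\beta$ into short enough arcs, join each sample point to $w$ by a geodesic of length $<r$, and recognise $\eta_0*\beta*\eta_0^-$ as a concatenation of loops each of length $<2r$; conjugating by an appropriate stem lands $[\gamma*\beta*\gamma^-]_{\pi_1}$ in $\L(X,2r,\pi_1)$. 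The main obstacle is the closeness lemma: the direct method and the sampling bookkeeping are routine, but verifying that uniformly nearby loops in a compact SLSC space are homotopic --- uniformly in the based/free dichotomy and through a common refinement of samples --- is exactly where Proposition \ref{PropRips}(8), the isomorphism $\rho_r^{\pi_1}$, and the vanishing $\L(X,3r,\pi_1)=0$ must be orchestrated together.
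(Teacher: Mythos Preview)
Your proof is correct and close in spirit to the paper's, but the route for (1) and (2) genuinely differs. You run the classical direct method: reparametrise a minimising sequence at constant speed, extract a uniform limit via Arzel\`a--Ascoli, invoke lower semicontinuity of length, and then apply a separately stated closeness lemma (proved through Proposition~\ref{PropRips}(8) and the isomorphism $\rho_r^{\pi_1}$) to place the limit in the correct class. The paper never leaves the discrete framework: it fixes a sample size $k$, takes $(t/2)$-samples of the minimising loops all of that size, uses compactness of $X^{k+1}$ to pass to a subsequence whose sample points converge coordinatewise, defines the limit loop as a filling of the limiting $t$-loop, and applies Proposition~\ref{PropRips}(8) directly to the samples --- no Arzel\`a--Ascoli, no reparametrisation, no semicontinuity. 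Your approach is the standard one from geometric analysis; the paper's is more internally uniform with its Rips machinery and avoids importing outside tools. Your treatment of the unconditional (non-compact) claim is in fact more careful than the paper's: you localise the SLSC radius to the compact image of a single representative, whereas the paper's proof as written invokes Proposition~\ref{PropPrePers}(4), which presupposes $X$ compact. Parts (3) and (4) in your proposal match the paper's argument essentially verbatim.
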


\begin{proof}
 By Proposition \ref{PropPrePers} there exists $t>0$ so that $\rho_t^{\pi_1}$ is an isomorphism. By Proposition \ref{PropKer},  each element of the fundamental group has a representative which is a filling of some $t$-loop in $X$, hence is of finite length.
 
 (1): We will obtain the required representative as the limit of representatives $\alpha_n$ defined below. Suppose $\alpha$ is a non-contractible loop. Define 
 $\ell=\inf_{\beta \simeq \alpha} \length(\beta).$ By the previous paragraph $\ell<\infty$ and since $X$ is SLSC, $\ell>0$. Choose also $t>0$ so that for each $x\in X$ the inclusion $B(x,3t/2)\to X$ induces the trivial map on the fundamental group. By Proposition \ref{PropPrePers} applied to our setting, any filling of a $t$-loop is homotopically unique. For each $n\in \NN$ choose $\alpha_n \simeq \alpha, \length(\alpha_n)< \ell + 1/n$. For each $n\in \NN$ we now choose any $(t/2)$-samples $L_n$ of $\alpha_n$ given by 
 $$
 \bullet = x_0^n, x_1^n, \ldots, x_k^n, x_{k+1}^n=\bullet,
 $$ 
where $k=\size(L_n)$ is fixed (we may choose $k$ as any integer greater than $2(\ell+1)/t$). By compactness there is a subsequence of $(L_n)_n$ so that for each $i$ the sequence $(x_i^n)_n$ converges to some $x_i$ in $X$. Consider now a $t$-loop $L$ given by $\bullet = x_0, x_1, \ldots, x_k, x_{k+1}=\bullet$ and its filling $\beta$, both of length $\ell$. By Proposition \ref{PropRips}(8) $L$ is $t$-homotopic to $L_N$ for some large enough $N$. By Propositions \ref{PropNull} and \ref{PropIneq} $[\beta * \alpha_N^-]_{\pi_1}\in \S(X,3t/2,\pi_1)=0$, hence $\beta$ is a representative of $[\alpha]_{\pi_1}$ of length $\ell$. 

The proof of (2) is  the same, considering unbased loops instead of based ones.

(3) Given $[\alpha]_{\pi_1}$ we use (2) to choose a minimal free homotopy representative $\beta$. It is  known that there exists path $\gamma$ from $\bullet$ to the corresponding point in $\beta$ so that the lasso $\gamma * \beta * \gamma^-$ is (based) homotopic to $\alpha$.  If this lasso representative was not minimal in the required sense then $\beta$ wouldn't be a shortest representative of $[\alpha]_{\pi_1}$. 

(4) By Proposition \ref{PropIneq} we only need to show $\S(X,r,\pi_1)\subseteq \L(X,2r,\pi_1)$. Take any lasso $\alpha=\gamma * \beta * \gamma^-$, where $\beta$ is a loop in some $B(x,r)$. \new{As $X$ is SLSC, we can assume $\gamma$ is of finite length. (The proof of this fact is similar to a corresponding proof for the loops: path $\gamma$ is homotopic rel boundary to any filling of its $t$-sample.).} Define $D=\max_{w\in \beta}d(w,x), \eps = r-D$. By Corollary \ref{CorStep} there exists $T<\eps$ so that for any chosen $t<T, \beta$ is homotopic to a filling of its  $t$-sample $L_\beta$. Suppose $L_\beta$ is given by points $x_0=x, x_1, \ldots, x_k, x_{k+1}=x$. For   
each $i$ choose a geodesic $\delta_i$ from $x$ to $x_i$ and a geodesic $\eta_i$ from $x_i$ to $x_{i+1}$. Now for each $i$ define a $(2D+\eps)$-loop $\beta_i = \delta_i * \eta_i * \delta_{i+1}^-$ and notice that $2D+\eps<2r$. Then $\alpha$ is homotopic to the concatenation of $(2D+\eps)$-loops 
$\alpha \simeq \beta_0 * \beta_1* \ldots * \beta_k$ hence $\alpha \in \L(X,2r,\pi_1)$.
\end{proof}

\begin{Remark}
Statements (1)-(3) of Proposition \ref{PropLength} are not valid if $X$ is not compact. A counterexample for (1) is $S^1 \times (0,1] \cup \{(x_0,0)\}\subset \RR^3$, where $x_0$ is some point on $S^1$ and $\bullet =(x_0,0)$. A counterexample for (2)  and (3) is the surface of revolution obtained by rotating the graph of $f(x)=1/x+1$ for $x>0$ around $x$-axis. 
\end{Remark}

\begin{Prop}\label{PropHLength}
If $G$ is an Abelian group and $X$ is a $G$-SLSC geodesic space then each element of $H_1(X,G)$ has a representative of finite length. Furthermore, if $X$ is compact then:
\begin{enumerate}
 \item each element of $H_1(X,G)$ has a representative of minimal length and therefore $\L(X, fin, G)=H_1(X, G)$;
 \item $\S(X,r,G)=\L(X,2r,G)$.
\end{enumerate}
\end{Prop}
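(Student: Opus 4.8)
The plan is to rerun the proof of Proposition~\ref{PropLength}, substituting homological ingredients for the homotopical ones: Proposition~\ref{PropHPrePers} for Proposition~\ref{PropPrePers}, Theorem~\ref{ThmNull}(2) for Theorem~\ref{ThmNull}(1), Propositions~\ref{PropKer}, \ref{PropDif}, \ref{PropLambda} as stated, and the second clauses of Proposition~\ref{PropIneq} and Corollary~\ref{CorStep}. The homological setting is in one respect \emph{easier} — there is no basepoint, so the decomposition of a filling into short loops is an identity (up to the usual telescoping) in the chain complex rather than a based homotopy, and no conjugating paths are needed — but it introduces one new bookkeeping point: a class in $H_1(X,G)$ is represented by a finite sum $\sum_{i}\lambda_i\gamma_i$ of loops, so ``length of a class'' must be read, in accordance with Definition~\ref{DefL}, as $\inf\sum_i\length(\gamma_i)$ over such representatives, and the limiting argument has to be applied summand by summand with the number of summands kept bounded. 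For the (non-compact) assertion that every class has a finite-length representative I would argue exactly as in Proposition~\ref{PropLength}: cover the compact image of each $\gamma_i$ by finitely many balls on which the inclusion induces the zero map on $H_1(\_,G)$, take a Lebesgue number, and use the telescoping of Proposition~\ref{PropKer} to replace $\gamma_i$ by the (finite-length) filling of a fine enough sample.

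Assume now $X$ compact. First I would fix, via Proposition~\ref{PropHPrePers}(4) and Theorem~\ref{ThmNull}(2), a radius $r_0>0$ with $\S(X,r_0,G)=0$, and then $t>0$ with $3t/2\le r_0$, so that $\S(X,3t/2,G)$, $\S(X,t,G)$, $\L(X,3t,G)$ and $\L(X,2t,G)$ all vanish; consequently $\rho^G_t$ and $\lambda^G_t$ are isomorphisms, all fillings of a given $t$-loop are homologous, and passing between a loop and the filling of a $t$-sample is lossless in $H_1(\_,G)$. Given a class $c$, set $\ell$ to be the infimal total length over finite-length representatives $\sum_i\lambda_i\gamma_i$ of $c$ (finite by the previous paragraph), and pick representatives $z_n=\sum_{i=1}^{m_n}\lambda^n_i\gamma^n_i$ of total length $<\ell+1/n$. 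Every summand with $\length(\gamma^n_i)<2r_0$ lies in a closed ball of radius $<r_0$, hence is nullhomologous and may be dropped; this bounds $m_n\le(\ell+1)/(2r_0)$, so after passing to a subsequence $m_n\equiv m$. Now, for each $i$, take $(t/2)$-samples $L^n_i$ of $\gamma^n_i$ of a common size $k$ (possible since lengths are bounded; pad with Proposition~\ref{PropRips}(4)) and extract a subsequence along which every vertex converges, obtaining limiting $t$-loops $L_i$ with fillings $\beta_i$ of total length $\le\liminf_n\sum_i\length(\gamma^n_i)\le\ell$. By Proposition~\ref{PropRips}(8), $L_i$ and $L^N_i$ are $t$-homotopic for $N$ large, so by Proposition~\ref{PropLambda} (and Proposition~\ref{PropDif} for well-definedness) their fillings are homologous; combined with Proposition~\ref{PropKer} applied at scale $t/2$ — whose error lies in $\S(X,t/2,G)\subseteq\S(X,r_0,G)=0$ — this gives $[\beta_i]_G=[\gamma^N_i]_G$ for each $i$. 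Hence $\sum_i\lambda^N_i\beta_i$ represents $c$ and has total length $\le\ell$, so the infimum is attained; in particular $\L(X,fin,G)=H_1(X,G)$.

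For part (2), the inclusion $\S(X,r,G)\supseteq\L(X,2r,G)$ is Proposition~\ref{PropIneq}, so I would only prove the reverse on a generator, a loop $\beta$ whose smallest closed enclosing ball $\cB(y,D)$ has $D<r$. Following Proposition~\ref{PropLength}(4): insert a back-and-forth geodesic so that $y\in\beta$ (harmless on the $1$-chain), put $\eps:=r-D$, let $r_1$ be as in Corollary~\ref{CorStep}, and choose $t<\min(\eps,r_1)$; take a $t$-sample $y=x_0,x_1,\ldots,x_k,x_{k+1}=y$ of $\beta$, let $\eta_i$ be the geodesic from $x_i$ to $x_{i+1}$ (so that $\hat\beta:=\eta_0*\cdots*\eta_k$ is a filling of the sample) and $\delta_i$ a geodesic from $y$ to $x_i$. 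Then $\sum_{i=0}^k[\delta_i*\eta_i*\delta_{i+1}^-]_G$ telescopes to $[\hat\beta]_G$ (the $\delta$-terms cancel), each loop $\delta_i*\eta_i*\delta_{i+1}^-$ has length $\le 2D+t<D+r<2r$, and $[\hat\beta]_G=[\beta]_G$ by the choice $t<r_1$ and Corollary~\ref{CorStep}; hence $[\beta]_G\in\L(X,2r,G)$.

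I expect the main obstacle to be the compactness step in part (1): unlike the $\pi_1$ case, where a class is carried by a single loop, here the minimizing sequence consists of \emph{sums} of loops, with a priori no control either on their number or on the coefficients $\lambda^n_i$. The resolution is twofold: all sufficiently short loops are nullhomologous, so they can be discarded to bound the number of summands; and one never needs the coefficients to stabilize along the subsequence, only the homology class of each limiting summand, which is pinned down because every relevant ``small'' subgroup ($\S(X,t/2,G)$, $\S(X,t,G)$, $\L(X,2t,G)$, $\L(X,3t,G)$) has been arranged to vanish. With those two observations in hand, everything else is a direct transcription of the proof of Proposition~\ref{PropLength}.
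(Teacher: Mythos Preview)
Your proposal is correct and follows exactly the route the paper indicates: its entire proof reads ``The proof is similar as that of the equivalent statements in [Proposition~\ref{PropLength}],'' and you have carried out that transcription carefully. In fact you have been more scrupulous than the paper on two points it glosses over: (i) for the non-compact clause you argue via a Lebesgue number on the compact image of each loop rather than invoking a global $t$ with $\rho^G_t$ an isomorphism (which, strictly, requires compactness); and (ii) for part~(1) you explicitly confront the fact that a homology class is represented by a \emph{sum} of loops, bounding the number of summands by discarding nullhomologous short ones and then running the limiting argument summand by summand without needing the coefficients to converge. Both refinements are sound and fill gaps the paper leaves implicit; otherwise the argument is the intended one.
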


\begin{proof}
 The proof is similar as that of  the equivalent statements in Corollary \ref{PropLength}.
\end{proof}

\begin{Def}\label{DefGalpha}
 Suppose $\alpha$ is an oriented simple closed curve \new{of finite length} in $X$. Define $G_\alpha \new{\subseteq \L(X, fin,  \pi_1)}$ as the collection of homotopy classes of all $\alpha$-lassos, i.e., loops of the form $\gamma * \alpha * \gamma^-$, where $\gamma$ is a path \new{of finite length} from $\bullet$ to any point on $\alpha$. Note that the concatenation takes place at the endpoint of $\gamma$.
\end{Def}

\begin{Remark}
 Similarly as in Proposition \ref{PropIneq} we may observe that  group $\langle G_\alpha \rangle$ generated by $G_\alpha$ is a normal subgroup of \new{$\L(X, fin,  \pi_1)$}. In fact, it is the normal closure of any single $\alpha$-lasso.
\end{Remark}

Suppose $r>0$. Given a sequence of loops of converging length in a compact geodesic space $X$, there may be no subsequence of loops, whose homotopy classes eventually stabilize. However, Lemma \ref{LemmaConvergence} explains that there exists a subsequence, whose homotopy types stabilize modulo short loops $ \L(X, 3r, \pi_1)$. In particular, this means that the homotopy type of the corresponding loops in the appropriate Rips complex stabilize. This lemma will be used extensively in Section \ref{SectStruc} to obtain limiting geodesic circles, arising from approximation of the critical values of a persistence via Proposition \ref{PropLambda}. In fact, these limiting geodesic circles will be the geometric features generating critical values of a persistence.

\begin{Lemma}\label{LemmaConvergence}
Suppose $X$ is a compact geodesic space and $r>0$. For each $i\in \NN$ let $\alpha_i$ be a  loop in $X$ of length $l_i$ and let $l_i\to l>0$ as $i \to \infty$. Then there exists a loop $\alpha$ of length at most $l$ in $X$ and an infinite set $J\subset \NN$ so that 
$$
G_{\alpha_j} \ \L(X, 3r, \pi_1) =G_{\alpha} \ \L(X, 3r, \pi_1), \quad \forall j\in J.
$$
If all $\alpha_i$ are geodesic circles then  the length of $\alpha$ equals $l$.

In particular, for each $j \in J$ and each path $\gamma$ \new{of finite length} from $\bullet$ to (some point on) $\alpha$ there exists a path $\gamma_j$  \new{of finite length} from $\bullet$ to $\alpha_j$ so that $\rho^{\pi_1}_r([\gamma_j *\alpha_j* \gamma_j ^-]_{\pi_1})=\rho^{\pi_1}_r([\gamma*\alpha*\gamma^-]_{\pi_1})$. Similarly,  for each $j \in J$ and each  path $\gamma_j$  \new{of finite length} from $\bullet$ to $\alpha_j$ there exists a path $\gamma$  \new{of finite length} from $\bullet$ to $\alpha$ so that $\rho^{\pi_1}_r([\gamma_j *\alpha_j* \gamma_j ^-]_{\pi_1})=\rho^{\pi_1}_r([\gamma*\alpha*\gamma^-]_{\pi_1})$.
\end{Lemma}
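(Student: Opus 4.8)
The plan is to build $\alpha$ as a geodesic filling of a limit of uniformly sized samples of the $\alpha_i$, and then to compare an $\alpha$-lasso with the corresponding $\alpha_j$-lasso by filling in the ``annulus'' between the two samples with triangles whose boundaries are short. The hard part is that last comparison: one must realise that annulus by geodesics in $X$ and check, by bookkeeping with the triangle inequality, that every triangle in it is filled by a loop of length $<3r$, so that the two lassos differ by a concatenation of $(<3r)$-lassos and hence by an element of $\L(X,3r,\pi_1)$. Everything else is a limiting argument together with the cut-disk decomposition already used in Propositions \ref{PropDif} and \ref{PropNull}.

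First I would set up the samples. Fix $s\in(0,r)$ and put $\eps=r-s$. Since $l_i\to l$, we have $l_i<l+1$ for all large $i$; discarding the finitely many exceptions, fix an integer $k\ge2$ with $k>(l+1)/s$. For each remaining $i$, parameterise $\alpha_i$ by arc length on $[0,l_i]$ and take the $s$-loop $L_i=(x_0^i,\dots,x_{k-1}^i,x_k^i=x_0^i)$ with $x_m^i=\alpha_i(ml_i/k)$; then $d(x_m^i,x_{m+1}^i)\le l_i/k<s$ and $\sum_m d(x_m^i,x_{m+1}^i)\le l_i$, with equality when $\alpha_i$ is a geodesic circle (there $d(x_m^i,x_{m+1}^i)=l_i/k$ since $k\ge2$). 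By compactness of $X^{k}$, choose an infinite $J\subseteq\NN$ along which $x_m^i\to x_m$ for every $m$, and set $x_k:=x_0$. Then $d(x_m,x_{m+1})\le s<r$, so $L:=(x_0,\dots,x_{k-1},x_0)$ is an $r$-loop; let $\alpha$ be a geodesic filling of $L$. Its length equals $\sum_m d(x_m,x_{m+1})=\lim_{i\in J}\sum_m d(x_m^i,x_{m+1}^i)\le\lim l_i=l$, with equality when all $\alpha_i$ are geodesic circles. Finally shrink $J$, keeping it infinite, so that $d(x_m^j,x_m)<\eps$ for all $j\in J$ and all $m$.

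The key step: for $j\in J$, a geodesic $\sigma_j$ from $x_0$ to $x_0^j$, and any path $\gamma$ from $\bullet$ to $x_0$, I claim $[\gamma*\alpha^-*\sigma_j*\alpha_j*\sigma_j^-*\gamma^-]_{\pi_1}\in\L(X,3r,\pi_1)$. The loop $\omega_j:=\alpha^-*\sigma_j*\alpha_j*\sigma_j^-$ based at $x_0$ is, up to conjugation, the boundary word of the combinatorial annulus between $L$ and $L_j$ from Proposition \ref{PropRips}(8) (Figure \ref{FigOpenRips}), cut open along the edge from $x_0$ to $x_0^j$; realising this annulus in $X$ by geodesics, its triangles are the $[x_m,x_{m+1},x_m^j]$ and $[x_m^j,x_{m+1}^j,x_{m+1}]$ (indices mod $k$), and by the triangle inequality each has perimeter $<2s+2\eps=2r<3r$, so each triangle is filled by a loop of length $<3r$. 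As in the proof of Proposition \ref{PropNull} — the boundary of a triangulated disk is the ordered product of the triangle boundaries conjugated by paths in the disk — $\gamma*\omega_j*\gamma^-$ is a concatenation of $(<3r)$-lassos based at $\bullet$, hence lies in $\L(X,3r,\pi_1)$ (normality of $\L(X,3r,\pi_1)$, Proposition \ref{PropIneq}, also lets one absorb the conjugators directly).

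The lemma now follows. Given a path $\gamma$ from $\bullet$ to a point $p\in\alpha$: cyclically relabelling the $x_m$ (and $x_m^i$) and sliding the attaching point of $\gamma$ along $\alpha$ to the vertex $x_0$ — neither operation changes the based homotopy class of the lasso — we may assume $p=x_0$. Put $\gamma_j:=\gamma*\sigma_j$, a path from $\bullet$ to $x_0^j\in\alpha_j$. Then $[\gamma*\alpha*\gamma^-]^{-1}[\gamma_j*\alpha_j*\gamma_j^-]=[\gamma*\alpha^-*\sigma_j*\alpha_j*\sigma_j^-*\gamma^-]\in\L(X,3r,\pi_1)$ by the key step, so $[\gamma*\alpha*\gamma^-]\,\L(X,3r,\pi_1)=[\gamma_j*\alpha_j*\gamma_j^-]\,\L(X,3r,\pi_1)\subseteq G_{\alpha_j}\L(X,3r,\pi_1)$; varying $\gamma$ gives $G_\alpha\L(X,3r,\pi_1)\subseteq G_{\alpha_j}\L(X,3r,\pi_1)$. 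The reverse inclusion is symmetric (slide the attaching point of $\gamma_j$ along $\alpha_j$ to $x_0^j$ and take $\gamma:=\gamma_j*\sigma_j^-$), so $G_{\alpha_j}\L(X,3r,\pi_1)=G_\alpha\L(X,3r,\pi_1)$ for every $j\in J$. Finally, since $\L(X,3r,\pi_1)\subseteq\S(X,3r/2,\pi_1)=\ker\rho_r^{\pi_1}$ (Propositions \ref{PropIneq} and \ref{PropPrePers}), the two coset identities just established yield precisely the stated equalities $\rho_r^{\pi_1}([\gamma_j*\alpha_j*\gamma_j^-])=\rho_r^{\pi_1}([\gamma*\alpha*\gamma^-])$ in both directions.
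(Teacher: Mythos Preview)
Your argument is correct and follows the same strategy as the paper: sample each $\alpha_i$ at a fixed number $k$ of equidistant points, pass to a convergent subsequence by compactness, take $\alpha$ to be a geodesic filling of the limit sample, and compare an $\alpha$-lasso with the corresponding $\alpha_j$-lasso via the triangulated annulus of Figure~\ref{FigOpenRips}. The paper routes that last comparison through the established machinery --- Proposition~\ref{PropRips}(8) to get an $r$-homotopy $L_j\simeq L$ in $\Rips(X,r)$, then Proposition~\ref{PropNull} to translate this to $\L(X,3r,\pi_1)$, plus a separate Proposition~\ref{PropDif}-type step to pass from $\alpha_j$ to a geodesic filling $\tilde\alpha_j$ of $L_j$ --- whereas you unroll all of this into a single direct perimeter estimate in $X$.

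One small slip to fix: when you ``realise the annulus in $X$ by geodesics'', the $\alpha_j$-side of the annulus becomes a geodesic filling $\tilde\alpha_j$ of $L_j$, not $\alpha_j$ itself, so its cut-open boundary is $\alpha^-*\sigma_j*\tilde\alpha_j*\sigma_j^-$ rather than your $\omega_j$. Either use the arcs of $\alpha_j$ for the edges $x_m^j x_{m+1}^j$ (arc length $=l_j/k<s$, so the same perimeter bound $<2s+2\eps$ holds), or add the one-line observation that each loop (arc of $\alpha_j$ from $x_m^j$ to $x_{m+1}^j$)$*$(geodesic back) has length $<2s<3r$, so $[\gamma_j*\alpha_j*\tilde\alpha_j^-*\gamma_j^-]\in\L(X,2s,\pi_1)\subset\L(X,3r,\pi_1)$. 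The paper does exactly the latter as its first bullet point.
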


\begin{proof}
Define $k=\lceil 2 \sup_i \{l_i\} /r \rceil +1$. For each $i$ choose an $(r/2)$-sample $L_i$ of $\alpha_i$ having size $k$, given by points $x^i_0, x^i_1, \ldots, x^i_k, x^i_{k+1}=x^i_0$. Note  that for each $m$ the length of the path segment of $\alpha_i$ between consecutive points $x^i_m$ and $x^i_{m+1}$ is less than $r/2$. By taking an appropriate subsequence of $(\alpha_i)_i$ and considering compactness we may assume convergence $\lim_{i\to\infty}x^i_j=x_j\in X$ for all $j$. Let $L$ be an $r$-loop determined by $x_0, x_1, \ldots, x_k, x_{k+1}=x_0$ and choose its filling $\alpha$. 
$$
l=\length(\alpha)=\sum_{j=1}^k d(x_j, x_{j+1})=\lim_{i \to \infty}\sum_{j=1}^k d(x^i_j, x^i_{j+1})\leq\lim_{i \to \infty} l_i.
$$
(If $\alpha_i$ are geodesic circles then the inequality in the above line is the equality and the length of $\alpha$ is $l$.)
Furthermore, by Proposition \ref{PropRips} (8) there exists $N\in 
\NN$ so that $L_i$ is $r$-homotopic to $L$ for all $i>N$ (meaning, in particular, that $x_j^i$ and $x^i$ are appropriately close). We will prove that for $i>N$ we have $G_{\alpha_i} \L(X, 3r, \pi_1) =G_{\alpha} \L(X, 3r, \pi_1)$, so the set of indices $i>N$, pulled back to the original sequence, corresponds to the set $J$.

Fix $i>N$. Choose a path $\gamma$ \new{of finite length} from $\bullet$ to (some point on) $\alpha$. We prolong $\gamma$ along (any direction of) $\alpha$ to $x_0$. Note that this does not change the homotopy type of $\gamma * \alpha * \gamma^-$. Define also $\gamma_i$ as a prolongation of $\gamma$ by a geodesic $\mu_i$ from $x_0$ to $x^i_0$. Define for each $i$ define $\widetilde L_i$ as $x_0, x^i_0, x^i_1, \ldots, x^i_k, x^i_{k+1}=x^i_0, x_0$, i.e., $\widetilde L_i$ is the $r$-loops obtained by appending and prepending $x_0$ to $L_i$. Note that $\widetilde L_i$ is an $r$-sample of $\mu_i * \alpha_i * \mu_i^-$.
We now combine the following arguments:
\begin{itemize}
 	\item similarly as in Proposition \ref{PropDif} we observe that for any $i$ and for any filling $\tilde \alpha_i$ of $L_i$ we have $[\gamma_i * \alpha_i * \tilde \alpha_i ^- * \gamma_i^-]_{\pi_1} =[\gamma_i * \alpha_i * \gamma_i^-]_{\pi_1} \cdot [\gamma_i * \tilde\alpha_i^- * \gamma_i^-]_{\pi_1}\in \L(X, 2r, \pi_1)$ due to the construction in the first paragraph in this proof;
 	\item the approach of Propositions \ref{PropRips} (8) (convergence and $r$-homotopic equivalence of the appropriate $r$-samples in terms of relation of the fillings) applied to loops $\alpha $ and $\mu_i * \tilde \alpha_i * \mu_i^- $ implies that $L$ and $\widetilde{L}_i$ are (based) $r$-homotopic;
 	\item Proposition \ref{PropNull} (expressing $r$-homotopic relation in terms of $\L(X, 3r, \pi_1)$) implies $[(\gamma * \alpha * \gamma^-) *(\gamma_i * \tilde \alpha_i^- * \gamma_i^-)]_{\pi_1}\in \L(X, 3r, \pi_1)$;
 	\item the fact that group $\L(X, 3r, \pi_1)$ is a normal subgroups of $\L(X, fin, \pi_1)$ by Proposition \ref{PropIneq},
\end{itemize}
 and conclude $(\gamma * \alpha * \gamma^-) \in (\gamma_i * \alpha_i * \gamma_i^- )\L(X, 3r, \pi_1)$ and $(\gamma_i * \alpha_i * \gamma_i^-) \in (\gamma * \alpha * \gamma^- )\L(X, 3r, \pi_1)$. 

Note that the construction in the previous paragraph is symmetric with respect to $\alpha$ and $\alpha_i$, i.e., for each path $\gamma_i$ \new{of finite length} from $\bullet$ to $\alpha_i$ we can also construct an appropriate prolongation $\gamma$ \new{of finite length} from $\bullet$ to $\alpha$ so that the conclusions of the previous paragraph hold. We conclude that $G_{\alpha_i} \L(X, 3r, \pi_1) =G_{\alpha} \L(X, 3r, \pi_1)$.

The last two statements follow from Propositions \ref{PropPrePers}(2) and \ref{PropIneq}. 
\end{proof}

\section{The structure of $\pi_1$-persistence}
\label{SectStruc}
In this section we  show how critical values are generated by geodesic circles, i.e. Theorem \ref{ThmPerBasis}. Our approach essentially combines the correspondence results of Section \ref{SectFundGroup} and the limiting argument of Section \ref{SectLimit}. Along the way we get a number of results on the structure of persistence: local finiteness of critical values, direction of critical values, etc. We conclude with two subsections. The first one introduces a decomposition into lassos of Proposition \ref{PropLassoDecomp}, which encodes the persistence of an element. The second is Theorem \ref{ThmIntrLoc}, which is a localization result stating that a geodesic circle, corresponding to a critical value of persistence, is encoded in the critical triangle of persistence (see Definition \ref{DefCritTri} for the definition of critical triangles). Thus the local feature (geodesic circle) inducing a critical value is located by the critical triangle. 

We start with the main technical argument of the section.

\begin{Theorem} \label{PropFin}
 Suppose $X$ is compact and geodesic,  and let $q$ be a critical value for $\pi_1$ persistence of $X$ via open Rips filtration.
 \begin{description}
 \item [a] For each $r>0$ there exist only finitely many critical values for $\pi_1$ persistence via open Rips complexes, which are greater than $r$. 
\item [b] Let $q<p$ be a pair of consecutive critical values. Group $\L(X, 3p, \pi_1)$ is generated by  $\L(X, 3q, \pi_1)$ and a collection of geodesic $3q$-lassos.
 \item [c] There exists a geodesic circle of length $3q$. 
 \item [d] Let $q<p$ be a pair of consecutive critical values. Group $\S(X, 3p/2, \pi_1)$ is generated by  $\S(X, 3q/2, \pi_1)$ and the same collection of geodesic $3q$-lassos that was used in \textbf{b}.
\end{description}
 \end{Theorem}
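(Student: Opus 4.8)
\textbf{Setup and the key extraction (which also gives \textbf{c}).} I would work entirely on the ``circumference side'': by Theorem \ref{ThmPerCirc} the persistence is isomorphic to $\{\L(X,fin,\pi_1)/\L(X,3r,\pi_1)\}_{r>0}$ with the quotient maps as bonding maps, so $q$ is a critical value exactly when $\L(X,3q,\pi_1)\subsetneq\L(X,3s,\pi_1)$ for every $s>q$ (these are automatically right‑critical, since all bonding maps are surjective by Proposition \ref{PropRips}(9)), and since $\Rips(X,s)$ is the contractible full simplex on $X$ for $s>\diam(X)$, every critical value lies in $(0,\diam(X)]$. The engine is the following procedure. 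Given a critical value $q$, for each $n$ pick $g_n\in\L(X,3q+3/n,\pi_1)\setminus\L(X,3q,\pi_1)$ and write it as a product of lassos of loop‑length $<3q+3/n$; since $\L(X,3q,\pi_1)$ is a group, some factor $\gamma_n*\beta_n*\gamma_n^-$ has $[\gamma_n*\beta_n*\gamma_n^-]_{\pi_1}\notin\L(X,3q,\pi_1)$, forcing $3q\le\length(\beta_n)<3q+3/n$. Feed $(\beta_n)$ into Lemma \ref{LemmaConvergence} with parameter $q$: this yields a loop $\beta$ with $\length(\beta)\le 3q$ and an infinite $J$ with $G_{\beta_j}\L(X,3q,\pi_1)=G_\beta\L(X,3q,\pi_1)$; hence for $j\in J$ the element $[\gamma_j*\beta_j*\gamma_j^-]_{\pi_1}$ equals a $\beta$‑lasso $w$ times an element of $\L(X,3q,\pi_1)$, so $w\notin\L(X,3q,\pi_1)$. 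Now two reductions rigidify $\beta$: if $\length(\beta)<3q$ then $w$ would be a lasso of loop‑length $<3q$, hence in $\L(X,3q,\pi_1)$; and if $\beta$ were not isometrically embedded, choosing points $p\ne p'$ on $\beta$ with $d_X(p,p')$ strictly below the shorter arc‑length $a\le\length(\beta)/2$ between them and a geodesic shortcut $\delta$ from $p$ to $p'$, one writes $\beta$ (through $p,p'$) as a concatenation of two loops of lengths $<2a\le 3q$ and $<3q$, so, after sliding the basepoint of $w$ to $p$ (using that $\L(X,3q,\pi_1)$ is normal), $w$ becomes a product of two lassos of loop‑length $<3q$ — both impossible. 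Therefore $\beta$ is a geodesic circle of length exactly $3q$, which is \textbf{c}, and we have recorded: every critical $q$ carries a geodesic circle $\beta$ of length $3q$ and a $\beta$‑lasso $w$ with $w\notin\L(X,3q,\pi_1)$ but $w\in\L(X,3q+3\eps,\pi_1)$ for all $\eps>0$.

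\textbf{Part \textbf{a}.} Suppose some $c>0$ is an accumulation point of the critical values and extract a strictly monotone sequence $q_i\to c$ of critical values, with geodesic circles $\beta_i$ of length $3q_i$ and $\beta_i$‑lassos $w_i\notin\L(X,3q_i,\pi_1)$ as above (so $w_i\in\L(X,3q_j,\pi_1)$ whenever $q_j>q_i$, since $\length(\beta_i)=3q_i$). Apply Lemma \ref{LemmaConvergence} to $(\beta_i)$ with a parameter $r_0$ satisfying $0<3r_0\le 3q_j$ for all $j$ and $3r_0<3c$: we obtain a loop $\beta$ and an infinite $J$ with $G_{\beta_j}\L(X,3r_0,\pi_1)=G_\beta\L(X,3r_0,\pi_1)$ for $j\in J$. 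If $q_i\searrow c$, then for $j\in J$ the element $w_j$ is a $\beta$‑lasso times an element of $\L(X,3r_0,\pi_1)$, and since $\length(\beta)\le 3c<3q_j$ both factors lie in $\L(X,3q_j,\pi_1)$, contradicting $w_j\notin\L(X,3q_j,\pi_1)$. If $q_i\nearrow c$, then for $j_1<j_2$ in $J$ we have $w_{j_1}=w'v$ with $w'$ a $\beta$‑lasso and $v\in\L(X,3r_0,\pi_1)\subseteq\L(X,3q_{j_2},\pi_1)$; since $w_{j_1}\in\L(X,3q_{j_2},\pi_1)$ this group — being normal and containing $w'$ — contains the normal closure $\langle G_\beta\rangle$, hence the $\beta$‑lasso occurring in the analogous expression for $w_{j_2}$, hence $w_{j_2}$, contradicting $w_{j_2}\notin\L(X,3q_{j_2},\pi_1)$. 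So the critical values have no positive accumulation point, i.e.\ \textbf{a}.

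\textbf{Parts \textbf{b} and \textbf{d}.} Let $q<p$ be consecutive critical values, $M:=\L(X,3p,\pi_1)$. As no critical value lies in $(q,p)$, $\L(X,3s,\pi_1)$ is constant on $(q,p)$ and $M=\bigcup_{s<p}\L(X,3s,\pi_1)$ equals that constant; let $N$ be the normal subgroup generated by $\L(X,3q,\pi_1)$ and all geodesic $3q$‑lassos, so $N\subseteq M$. For $M\subseteq N$, suppose not and let $\ell_0\ge 3q$ be the infimum of the loop‑lengths of lassos in $M\setminus N$; take such lassos $\gamma_n*\zeta_n*\gamma_n^-$ with loop‑lengths tending to $\ell_0$ and run the extraction above on $(\zeta_n)$ (parameter $q$): it produces a $\zeta_\infty$‑lasso $w\in M\setminus N$ whose loop $\zeta_\infty$ has length $\le\ell_0$ — hence $=\ell_0$, by minimality — and is isometrically embedded (otherwise $w$ splits into a product of two lassos of loop‑length $<\ell_0$, at least one of which lies in $M\setminus N$, against minimality). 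Analyzing $w$: it satisfies $w\notin\L(X,3q,\pi_1)$ and $w\in\L(X,3t,\pi_1)$ for all $t>\ell_0/3$, so the infimum $t_w$ of the $t$ with $w\in\L(X,3t,\pi_1)$ lies in $[q,\ell_0/3]$ and is a critical value (the class of $w$ is nonzero in $\pi_1(\Rips(X,t_w),\bullet)$ and dies just above $t_w$), whence $t_w=q$; and then from $w\in\L(X,3q+3/n,\pi_1)$ for all $n$, decomposing $w$ into lassos of loop‑length $<3q+3/n$ yields a lasso of loop‑length $<3q+3/n$ in $M\setminus N$, so $\ell_0<3q+3/n$. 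Thus $\ell_0=3q$, $\zeta_\infty$ is a geodesic circle of length $3q$, and $w\in\langle G_{\zeta_\infty}\rangle\subseteq N$ — a contradiction. So $M=N$, which is \textbf{b}. Finally \textbf{d} is formal: Theorems \ref{ThmPerSpa}–\ref{ThmPerCirc} give an isomorphism $\pi_1(X,\bullet)/\S(X,3r/2,\pi_1)\cong\L(X,fin,\pi_1)/\L(X,3r,\pi_1)$, natural in $r$, which on finite‑length loops is the quotient map $\L(X,fin,\pi_1)\to\L(X,fin,\pi_1)/\L(X,3r,\pi_1)$; hence $\S(X,3p/2,\pi_1)$ is the preimage in $\pi_1(X,\bullet)$ of $M/\L(X,3q,\pi_1)$ under $\pi_1(X,\bullet)\to\L(X,fin,\pi_1)/\L(X,3q,\pi_1)$, and pulling back $M=\langle\L(X,3q,\pi_1),\mathcal C\rangle$ from \textbf{b} (with $\mathcal C$ the collection of geodesic $3q$‑lassos used there) gives $\S(X,3p/2,\pi_1)=\langle\S(X,3q/2,\pi_1),\mathcal C\rangle$.

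\textbf{Where the difficulty lies.} The real work is the extraction feeding \textbf{c}: Lemma \ref{LemmaConvergence} only matches the lassos of the converging loops \emph{modulo} $\L(X,3q,\pi_1)$, so passing from it to an honestly isometrically embedded circle of the \emph{exact} length $3q$, rather than merely a loop of length $\le 3q$, is the crux — and this is exactly what the two ``shortcut'' reductions achieve, converting a soft compactness limit into a rigid one. In \textbf{b} these reductions must be interwoven with the criticality bookkeeping (the death‑parameter argument) to prevent the new generators from having length strictly between $3q$ and $3p$. The manipulations with basepoints of lassos and with the normality of the groups $\L(X,\cdot,\pi_1)$ are needed throughout but are routine.
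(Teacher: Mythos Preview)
Your proof is correct and uses essentially the same engine as the paper: Lemma \ref{LemmaConvergence} to pass to a limiting loop, combined with the shortcut decomposition showing that a non-geodesic loop of length $\ell$ splits into two lassos of loop-length $<\ell$. The only real difference is organizational: the paper proves \textbf{a} first (directly from Lemma \ref{LemmaConvergence}, without first extracting geodesic circles), then \textbf{b} by a short contradiction argument (assume some lasso of loop-length in $(3q,3q+\eps/i)$ escapes the subgroup, apply the Lemma with parameter $q$ to land on a lasso of loop-length $\le 3q$, done), and obtains \textbf{c} as an immediate corollary of \textbf{b}; your ordering is \textbf{c}$\to$\textbf{a}$\to$\textbf{b}$\to$\textbf{d}, and your \textbf{b} carries an additional ``death-parameter'' step (showing $t_w$ is a critical value, hence $=q$, hence $\ell_0=3q$) which the paper avoids by using directly that $\L(X,3s,\pi_1)$ is constant on $(q,p)$. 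Your route is a bit longer in \textbf{b} but makes the geodesic-circle extraction in \textbf{c} more explicit; the paper's route is terser but relies on the reader unpacking why lassos of loop-length arbitrarily close to $3q$ must exist in $\L(X,3p,\pi_1)\setminus N$. Both arguments for \textbf{d} are the same formal transfer via Theorems \ref{ThmPerSpa}–\ref{ThmPerCirc}.
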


\begin{Remark}
 It is easy to find an example demonstrating that a collection of geodesic circles in Theorem \ref{PropFin} needs not be finite. For example, consider $X$ to be the wedge of two circles of circumferences $1$ and $2$ with $\bullet$ being the common point. Let $\alpha$ denote the smaller circle. Then $\L(X, 3/2, \pi_1)$ is generated by a countable collection of $\alpha$-lassos and is not finitely generated.  Putting it differently, the normal subgroup of $\pi_1(X,\bullet)$ generated by  $\alpha$ is not finitely generated. In fact, one can observe that $\L(X, 3/2, \pi_1)$ is isomorphic to the kernel of the homomorphism defined on the free group on two elements $F_{a,b}\to \ZZ$ mapping $a\mapsto 0$ and $b \mapsto 1$.
 \end{Remark}

\begin{proof}
We first prove \textbf{a}. Suppose there are infinitely many critical values greater than $r$. $T=2 \diam(X)/3$ is its upper bound by Proposition \ref{PropPrePers}. We take a strictly monotonic subsequence  $t_1, t_2, \ldots\to t\geq r$ of such critical values.  By Proposition \ref{PropLambda} there exists a sequence of loops $(\alpha_i)_{i>1}$ with the following properties:
 \begin{enumerate}
 \item the lengths of circles form a strictly monotonic (increasing or decreasing, depending on the sequence of $t_i$'s) sequence. For example we may take $\length (\alpha_i)>3r$ to be strictly between $3(t_i+t_{i+1})/2$ and $3(t_i+t_{i-1})/2$;
 \item for each $i$ there exists a path $\gamma_i$ \new{of finite length} from $\bullet$ to some point on $\alpha_i$ so that $\gamma_i * \alpha_i * \gamma_i^-$ can't be expressed (up to homotopy) as a finite concatenation of geodesic lassos of length smaller than $\min \{3(t_i+t_{i+1})/2,3(t_i+t_{i-1})/2\}$.
\end{enumerate}
It follows from Lemma \ref{LemmaConvergence} that there exist $j,k\in\NN$ so that $t_j<t_k$ and $\gamma_k * \alpha_k * \gamma_k^-=\gamma_j * \alpha_j * \gamma_j^- \ \L(X, 3r, \pi_1)$, a contradiction to (2). This proves \textbf{a}.

We now turn our attention to \textbf{b}. Suppose $q$ is the only critical value on the interval $[q-\eps, q+\eps]$. Assume group $\L(X, 3p, \pi_1)$ is not generated by  $\L(X, 3q, \pi_1)$ and a collection of $3q$-lassos.
That means that for all $i\in \NN$ we can find a lasso $\gamma_i* \alpha_i * \gamma_i^-$  \new{of finite length} (based at $\bullet$) so that $3q<\length(\alpha_i)< 3q + \eps/i$ and $[\gamma_i* \alpha_i * \gamma_i^-]_{\pi_1}$ can't be obtained using 
$L(X, 3q, \pi_1)$ and $3q$-lassos. By Lemma \ref{LemmaConvergence} there exist a lasso $\gamma * \alpha * \gamma^-$ \new{of finite length} with $\length(\alpha)\leq 3q$ and $i$ so that $\gamma_i* \alpha_i * \gamma_i^- \simeq (\gamma * \alpha * \gamma^-)L(X, 3q, \pi_1)$, a contradiction. 
 Also, any $3q$-lasso which is not a geodesic lasso can be obviously decomposed, proving that its homotopy class is contained in $\L(X, 3q, \pi_1)$ hence \textbf{b} holds. Statement \textbf{c} follows from \textbf{b} and the fact that $q$ is a critical value.
 
As last we prove \textbf{d}. 
By  Theorems \ref{ThmPerSpa} and \ref{ThmPerCirc} we have
$$
\{\pi_1(X, \bullet)/\S(X, 3p/2, \pi_1)\}_{r>0}
\cong
\{\L(X,fin, \pi_1)/
\L(X, 3p, \pi_1)
\big\}_{r>0},
$$  
with the corresponding isomorphism $F$ being induced by samplings and fillings.   Isomorphism $F$ may be chosen to be identity on geodesic circles for appropriate parameters.
By  \textbf{b}, $\L(X, 3p, \pi_1)$ is generated by $\L(X, 3q, \pi_1)$ and $A$, the later being a collection of geodesic $3q$-lassos. Using the nature of $F$ mentioned above, we conclude that $\S(X, 3p/2, \pi_1)$ contains $N$, which is the subgroup of $\pi_1(X, \bullet)$ generated by $\S(X, 3q/2, \pi_1)$ and $A$. We claim that $\S(X, 3p/2, \pi_1)$ equals $N$. Choose $[\beta]\in \S(X, 3p/2, \pi_1)$, and let $\beta'$ be a filling of a $(3q/2)$-sample of $\beta$. Using the property of $F$, $[\beta']$ is contained in a subgroup generated by $\L(X, 3q, \pi_1)$ and $A$. Applying $F^{-1}$ (which again fixes geodesic circles), we see that $\beta'$ is contained in a subgroup generated by $\S(X, 3q/2, \pi_1)$ and $A$. By Proposition \ref{PropKer}, $\beta$ is also contained in a subgroup generated by $\S(X, 3q/2, \pi_1)$ and $A$. We conclude that $\S(X, r, \pi_1)$ equals $N$.
 \end{proof}
 
 \begin{Remark}\label{RemNecess}
Most of the statements of this section do not hold if $X$ is not compact. For example consider a geodesic space, obtained as a wedge of circles of circumference $1+1/n$ for all $n\in \NN$. Note that $1/3$ is a critical value of the $\pi_1$-persistence of $X$ via open Rips complexes, but there is no geodesic circle of length $1$. Furthermore, the collection of critical values greater than $1/3$ is not discrete. 
\end{Remark}

The following is a strengthening of Theorem \ref{PropFin} for SLSC spaces. 

\begin{Corollary}\label{CorFinCritValue}
  If $X$ is compact, geodesic, and SLSC then 
\begin{enumerate}
 \item  there exist only finitely many critical values for $\pi_1$ persistence of $X$ via open Rips filtration;
 \item   $\pi_1(X,\bullet)$ is generated by a finite collection of geodesic lassos; 
 \item if $q$ is a critical value for $\pi_1$ persistence of $X$ via open Rips filtration then there exist at most finitely many (and at least one) pairwise (freely) non-homotopic geodesic circles of length $3q$;
 \item there exist at most finitely many (freely) non-homotopic geodesic circles in $X$.
\end{enumerate}
\end{Corollary}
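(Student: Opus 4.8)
The plan is to read all four items off the structural machinery already established --- principally Theorem \ref{PropFin}, Proposition \ref{PropPrePers}(4), Proposition \ref{PropLength}(1), Corollary \ref{CorFG}, Theorem \ref{ThmPerCirc} and Lemma \ref{LemmaConvergence} --- with only light extra bookkeeping. For (1) I would combine two facts: Theorem \ref{PropFin} \textbf{a} already gives finitely many critical values above any fixed $r>0$, so it remains to exclude an accumulation at $0$; by Proposition \ref{PropPrePers}(4) there is $t>0$ with $\rho_r^{\pi_1}$ an isomorphism for all $r<t$, and since the $\rho^{\pi_1}_r$ commute with the bonding maps (Proposition \ref{PropPrePers}(3)) the maps $\pi_1(i_{r,r'})$ are isomorphisms for all $0<r<r'<t$, so no $p\le t/2$ is critical. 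Together with Theorem \ref{PropFin} \textbf{a} applied to $r=t/2$ this settles (1).

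For (2), first $\pi_1(X,\bullet)$ is finitely generated: by Proposition \ref{PropPrePers}(4) it is isomorphic to $\pi_1(\Rips(X,r),\bullet)$ for small $r$, which is finitely generated by Corollary \ref{CorFG}. Next I would show $\pi_1(X,\bullet)$ is generated (as a group) by the a priori infinite family $\Gamma$ of all geodesic lassos whose loop has length $3q$ for some critical value $q$. By Proposition \ref{PropLength}(1), $\pi_1(X,\bullet)=\L(X,fin,\pi_1)$; for $r>\diam(X)$ the complex $\Rips(X,r)$ is one simplex, hence contractible, so by Theorem \ref{ThmPerCirc} one has $\L(X,3r,\pi_1)=\pi_1(X,\bullet)$ there; and for $r$ below the smallest critical value $\L(X,3r,\pi_1)=0$ (the persistence is constant there with $\rho_r^{\pi_1}$ an isomorphism; equivalently every loop shorter than the shortest essential loop is null). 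Listing the finitely many critical values $q_1<\cdots<q_n$ from (1), an iteration of Theorem \ref{PropFin} \textbf{b} over the consecutive pairs $(q_1,q_2),\dots,(q_{n-1},q_n)$ and once more across $q_n$ (its proof applies verbatim with $q=q_n$ and any $p>q_n$, there being no critical value in $(q_n,p]$) exhibits $\pi_1(X,\bullet)$ as generated by $\L(X,3q_1,\pi_1)=0$ together with geodesic $3q_i$-lassos; that is, $\pi_1(X,\bullet)=\langle\Gamma\rangle$. Finally, a finitely generated group generated by a set $\Gamma$ is already generated by a finite subset of $\Gamma$ (express each of the finitely many group generators as a word in finitely many members of $\Gamma$), which is precisely (2).

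For (3), existence of a geodesic circle of length $3q$ is Theorem \ref{PropFin} \textbf{c}. Suppose there were infinitely many pairwise freely non-homotopic geodesic circles $\alpha_1,\alpha_2,\dots$, all of length exactly $3q$. Feed them into Lemma \ref{LemmaConvergence} with a parameter $r>0$ small enough that $3r$ is below the infimum $s$ of lengths of essential loops --- which is positive by the SLSC hypothesis via Propositions \ref{PropPrePers}(4) and \ref{PropIneq}, so that $\L(X,3r,\pi_1)=0$; this produces a loop $\alpha$ and an infinite $J$ with $G_{\alpha_j}\,\L(X,3r,\pi_1)=G_\alpha\,\L(X,3r,\pi_1)$, i.e.\ $G_{\alpha_j}=G_\alpha$ since $\L(X,3r,\pi_1)=0$, for all $j\in J$. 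As $G_\beta$ is exactly the conjugacy class in $\pi_1(X,\bullet)$ attached to the free homotopy class of $\beta$, $G_{\alpha_j}=G_\alpha$ forces $\alpha_j$ freely homotopic to $\alpha$, hence all $\alpha_j$ ($j\in J$) freely homotopic to one another, contradicting the choice. Item (4) runs the same argument after two preliminary observations: an isometrically embedded circle of circumference $L$ contains antipodal points at mutual distance $L/2$, so $L\le 2\diam(X)<\infty$ and all geodesic circles have uniformly bounded length; and at most one geodesic circle is null-homotopic (two null-homotopic loops being freely homotopic) while every essential loop has length at least the positive number $s$. Thus infinitely many pairwise freely non-homotopic geodesic circles would, after deleting the at most one inessential one, have lengths in the compact interval $[s,2\diam(X)]$; a subsequence whose lengths converge to some $l>0$ then runs through Lemma \ref{LemmaConvergence} (again with $3r<s$) exactly as in (3) to yield a contradiction.

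The step requiring the most care is (2): one must make sure the iteration of Theorem \ref{PropFin} \textbf{b} also covers the parameter range above the largest critical value $q_n$, where $\L(X,3r,\pi_1)$ jumps up to all of $\pi_1(X,\bullet)$, and one genuinely uses finite generation of $\pi_1(X,\bullet)$ to pass from the infinite generating family $\Gamma$ to a finite one --- the individual collections of geodesic lassos furnished by Theorem \ref{PropFin} \textbf{b} cannot themselves be taken finite (cf.\ the remark following Theorem \ref{PropFin}). The convergence arguments in (3) and (4) are then routine, the only point being that taking the auxiliary radius below $s$ upgrades the conclusion of Lemma \ref{LemmaConvergence} from equality of cosets modulo $\L(X,3r,\pi_1)$ to honest free homotopy.
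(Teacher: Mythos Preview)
Your proof is correct and follows essentially the same approach as the paper: for (1) you combine Theorem~\ref{PropFin}~\textbf{a} with the SLSC bound from Proposition~\ref{PropPrePers}(4); for (2) you iterate Theorem~\ref{PropFin}~\textbf{b} and pass to a finite subfamily via Corollary~\ref{CorFG}; and for (3)--(4) you run Lemma~\ref{LemmaConvergence} with a radius small enough that $\L(X,3r,\pi_1)=0$. Your treatment is more detailed than the paper's (which is rather terse), and in (2) and (4) you are appropriately careful about points the paper glosses over---handling the step past the largest critical value $q_n$ and disposing of the at-most-one nullhomotopic free homotopy class.
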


\begin{proof}
If $X$ is SLSC then there exists $M>0$ so that all critical values are greater than $M$.  (1) follows from Theorem \ref{PropFin}(a). (2) follows from Theorem \ref{PropFin}(b) and Corollary \ref{CorFG}.

In order to prove (3) suppose the contrary, that there exist infinitely many pairwise (freely) non-homotopic geodesic circles $\alpha_i$ of length $3q$. Apply Lemma \ref{LemmaConvergence} to the collection of loops $\{\alpha_i\}_i$ and use the fact that for a SLSC space there exists $r>0$ so that $\L(X, 3r, \pi_1)=0$, to conclude that  for infinitely many indices $j$ the collections $G_{\alpha_j}$ coincide, a contradiction.

The proof of (4) is similar to that of (3). The lengths of non-contractible geodesic circles are between $r$ (as defined in the proof of (3)) and $2 \diam(X)$, and so we can apply Lemma \ref{LemmaConvergence} again.
\end{proof}

\begin{Corollary}\label{CorDirCrit}
 Suppose $X$ is compact and geodesic. Then all critical values for $\pi_1$ persistence via open Rips filtration are right critical values. 
\end{Corollary}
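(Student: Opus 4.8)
The plan is to prove the slightly stronger statement that, for this filtration, \emph{no} positive real is a \emph{left} critical value; since every critical value is a left or a right critical value by Definition \ref{DefCritical}, this forces all critical values to be right critical values. The first move is to pass to the groups $\L$. By Theorem \ref{ThmPerCirc} the $\pi_1$-persistence $\{\pi_1(\Rips(X,r),\bullet)\}_{r>0}$ is isomorphic, as a persistence, to $\{\L(X,fin,\pi_1)/\L(X,3r,\pi_1)\}_{r>0}$ with the obvious quotient maps as bonding maps; hence $i_{p-\eps,p}$ is an isomorphism exactly when $\L(X,3(p-\eps),\pi_1)=\L(X,3p,\pi_1)$. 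The family $s\mapsto\L(X,s,\pi_1)$ is monotone, and $\L(X,3p,\pi_1)=\bigcup_{r<p}\L(X,3r,\pi_1)$ because every generating lasso of $\L(X,3p,\pi_1)$ already has length $<3(p-\eps)$ for all small $\eps$. Consequently $p$ is a left critical value if and only if the increasing family $\{\L(X,3r,\pi_1)\}_{r<p}$ never stabilizes, i.e. $\L(X,3r,\pi_1)\subsetneq\L(X,3p,\pi_1)$ for every $r<p$.

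The heart of the argument is to rule this out, and this is where compactness enters. Assuming non-stabilization, I would recursively produce reals $r_1<r_2<\dots<p$ with $\L(X,3r_k,\pi_1)\subsetneq\L(X,3r_{k+1},\pi_1)$ for all $k$ — the construction keeps going because $\L(X,3p,\pi_1)=\bigcup_{r<p}\L(X,3r,\pi_1)$ strictly contains each $\L(X,3r_k,\pi_1)$. Each strict inclusion makes $i_{r_k,r_{k+1}}$ fail to be an isomorphism, and a routine argument then shows $[r_k,r_{k+1}]$ must contain a critical value $s_k$: one checks that $s_k:=\sup\{r\in[r_k,r_{k+1}]:i_{r_k,r}\text{ is an isomorphism}\}$ is itself a critical value, using that all bonding maps are surjective (Proposition \ref{PropRips}(9)). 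Since $s_k\le r_{k+1}<r_{k+2}\le s_{k+2}$, the numbers $s_1<s_3<s_5<\dots$ are infinitely many distinct critical values, all $\ge r_1>0$, contradicting Theorem \ref{PropFin}\textbf{a}. An alternative route for this step is through Theorem \ref{PropFin}\textbf{b}: taking $q<p$ to be the two consecutive critical values at issue, the generators of $\L(X,3p,\pi_1)$ that it furnishes are lassos of length exactly $3q<3(p-\eps)$, hence already lie in $\L(X,3(p-\eps),\pi_1)$, so the family does stabilize.

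The only genuine obstacle is this last step, and compactness is indispensable there: without it critical values can accumulate at a positive value from below, and then each such value is a left critical value, as the wedge of circles of circumferences $1+1/n$ of Remark \ref{RemNecess} shows. Finally, the same argument, run with the homological counterparts of the results invoked above, gives the corresponding statement for $H_1$-persistence.
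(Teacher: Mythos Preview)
Your argument is correct. Both your main route and your alternative via Theorem \ref{PropFin}\textbf{b} work (for the alternative, note that if $p$ happens to be the smallest critical value you can still run the argument by taking $q$ to be any positive number below $p$; the conclusion $\L(X,3(p-\eps),\pi_1)=\L(X,3p,\pi_1)$ then follows trivially since there are no critical values in between).

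The paper's route is different and a bit more direct: it never passes to the $\L$-groups. Instead, given a critical value $c$ isolated by Theorem \ref{PropFin}\textbf{a}, it argues that if $c$ were left critical there would be a loop $L$ which is $c$-null but not $(c-\eps/2)$-null; since a $c$-nullhomotopy uses only finitely many triangles of $\Rips(X,c)$ and every edge has length \emph{strictly} less than $c$, the same nullhomotopy already lives in some $\Rips(X,c')$ with $c'<c$, forcing a critical value strictly below $c$ and contradicting isolation. Your observation $\L(X,3p,\pi_1)=\bigcup_{r<p}\L(X,3r,\pi_1)$ is the $\L$-side translation of exactly this ``finite nullhomotopy lives at a lower level'' phenomenon, but you then need the extra step of manufacturing infinitely many critical values to reach a contradiction, whereas the paper gets the contradiction from a single loop. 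Your approach has the virtue of making the role of the open condition (strict inequality in the definition of $\L$) very transparent; the paper's buys brevity.
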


\begin{proof}
Follows from Theorem \ref{PropFin}(\textbf{a}). Given a critical $c$ value choose an interval $[c-\eps, c+\eps]$ containing no other critical value and choose $0<r<c-\eps$. If $c$ was a left critical value then by Proposition \ref{PropPrePers}(1) there would exist an $r$-loop $L$, which would be $c$-null but not $(c-\eps/2)$-null. Since any $c$-nullhomotopy requires only a finite amount of triangles in $\Rips(X, c)$, $L$ would also be $c'$-null for some $c'\in (c-\eps/2,c)$. Thus there would be a critical value on $[c-\eps/2,c']$, a contradiction.
 \end{proof}

\begin{Remark}\label{RemNecess1}
Corollary \ref{CorDirCrit}  does not hold if $X$ is not compact. For example consider a geodesic space, obtained as a wedge of circles of circumference $2-1/n$ for all $n\in \NN$. Note that $2$ is a right critical value of the $\pi_1$-persistence of $X$ via open Rips complexes.
\end{Remark}

\begin{Proposition}\label{PropLassoGens}
Suppose $X$ is compact and geodesic. 
The fundamental group of each Rips complex is generated by a finite collection of geodesic lassos. Each kernel of a bonding map is a normal closure of a finite collection of geodesic lassos.  
\end{Proposition}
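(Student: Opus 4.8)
The plan is to deduce both claims from the structural Theorem~\ref{PropFin}, the finite generation Corollary~\ref{CorFG}, the Spanier correspondence Theorem~\ref{ThmPerSpa}, and the convergence Lemma~\ref{LemmaConvergence}. Via $\rho_r^{\pi_1}$ and Theorem~\ref{ThmPerSpa} I identify $\pi_1(\Rips(X,r),\bullet)$ with $\pi_1(X,\bullet)/\S(X,3r/2,\pi_1)$, compatibly with bonding maps (Proposition~\ref{PropPrePers}(3)); under this identification $\pi_1(i_{p,q})$ is the quotient map $\pi_1(X,\bullet)/\S(X,3p/2,\pi_1)\to\pi_1(X,\bullet)/\S(X,3q/2,\pi_1)$, so its kernel is the image of $\S(X,3q/2,\pi_1)$, and an $r$-sample of a geodesic lasso of $X$ is its image under $\rho_r^{\pi_1}$, a ``geodesic lasso in $\Rips(X,r)$''. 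I also record two elementary facts: a finitely generated group generated by a set $S$ is generated by a finite subset of $S$; and, since every $\alpha$-lasso is homotopic to a conjugate (by a loop at $\bullet$) of any chosen geodesic $\alpha$-lasso, the normal closure of a set of geodesic $l$-lassos coincides, in $\pi_1(X,\bullet)$ and hence after applying $\rho^{\pi_1}_p$, with the normal closure of the subgroups $\langle G_\alpha\rangle$ over the geodesic circles $\alpha$ of circumference $l$ involved.

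\emph{Generation.} Fix $r>0$. By Theorem~\ref{PropFin}(\textbf{a}) the critical values exceeding $r$ are finite in number; chaining Theorem~\ref{PropFin}(\textbf{d}) down from $\S(X,3s/2,\pi_1)=\pi_1(X,\bullet)$ (valid for $s$ beyond all critical values) through these to $\S(X,3r/2,\pi_1)$ — invoking the argument of that proof for the topmost step, where the relevant group has already stabilized — presents $\pi_1(X,\bullet)$ as generated by $\S(X,3r/2,\pi_1)$ together with a collection of geodesic lassos. Passing to the quotient, $\pi_1(\Rips(X,r),\bullet)$ is generated by a set of geodesic lassos; being finitely generated (Corollary~\ref{CorFG}), it is generated by a finite subset of that set.

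\emph{Kernels.} A general bonding map $\pi_1(i_{p,q})$ is a finite composite (Theorem~\ref{PropFin}(\textbf{a})) of bonding maps whose parameter interval contains a single critical value, and for a composite $G_0\twoheadrightarrow G_1\twoheadrightarrow G_2$ of surjections (bonding maps are onto, Proposition~\ref{PropRips}(9)) the kernel of $G_0\to G_2$ is the normal closure of normal generators of $\ker(G_0\to G_1)$ together with $G_0$-lifts of normal generators of $\ker(G_1\to G_2)$ — and a geodesic lasso lifts to a geodesic lasso by the opening remark. So suppose $[p,q)$ contains a single critical value $c$; then $\S(X,3p/2,\pi_1)=\S(X,3c/2,\pi_1)$, and by Theorem~\ref{PropFin}(\textbf{d}) $\S(X,3q/2,\pi_1)$ is generated by $\S(X,3p/2,\pi_1)$ and geodesic $3c$-lassos. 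Hence $\ker\pi_1(i_{p,q})$ is the normal closure in $\pi_1(\Rips(X,p),\bullet)$ of the images of the geodesic $3c$-lassos, which by the opening remark equals the normal closure of the images $\rho_p^{\pi_1}(\langle G_\alpha\rangle)$ over geodesic circles $\alpha$ of circumference $3c$. By the Remark preceding Lemma~\ref{LemmaConvergence} each $\langle G_\alpha\rangle$ is the normal closure in $\pi_1(X,\bullet)$ of a single geodesic $\alpha$-lasso; and by Lemma~\ref{LemmaConvergence} (with parameter $p$) there are only finitely many distinct images $\rho_p^{\pi_1}(\langle G_\alpha\rangle)$, for otherwise two of the corresponding circles would have coinciding $G_\cdot\,\L(X,3p,\pi_1)$. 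Choosing one geodesic $\alpha_i$-lasso for each of these finitely many classes, $\ker\pi_1(i_{p,q})$ is the normal closure of their images, finitely many geodesic lassos.

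The step I expect to be the crux is the single-critical-value kernel: Theorem~\ref{PropFin}(\textbf{d}) supplies only an a priori infinite collection of geodesic lassos (genuinely infinite in general, by the Remark following Theorem~\ref{PropFin}), and the passage to finitely many \emph{normal} generators rests on precisely the two compact-geodesic inputs — Lemma~\ref{LemmaConvergence}, which bounds the number of circumference-$3c$ geodesic circles up to $\S(X,3p/2,\pi_1)$, and the normality of $\langle G_\alpha\rangle$ with a single normal generator. A subsidiary technical point is the behaviour at the top of the filtration in the generation argument, where Theorem~\ref{PropFin}(\textbf{d}) must be applied in the marginally more general form its proof actually yields.
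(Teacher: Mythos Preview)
Your argument is correct and follows essentially the same route as the paper's: reduce via a correspondence theorem to the underlying space, chain Theorem~\ref{PropFin} across the finitely many critical values above $r$ to exhibit generators by geodesic lassos, invoke Corollary~\ref{CorFG} for finiteness, and for kernels reduce to a single critical value and use Lemma~\ref{LemmaConvergence} plus the conjugacy of $\alpha$-lassos to cut the infinite family of geodesic $3c$-lassos down to finitely many normal generators. The only cosmetic difference is that the paper works through the $\L$-correspondence (Proposition~\ref{PropLambda} and Theorem~\ref{PropFin}\textbf{b}) while you use the parallel $\S$-correspondence (Theorem~\ref{ThmPerSpa} and Theorem~\ref{PropFin}\textbf{d}); these are interchangeable here, and your explicit treatment of the topmost step (where Theorem~\ref{PropFin} must be read for a half-open interval rather than between two consecutive critical values) is, if anything, a shade more careful than the paper's.
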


\begin{proof}
  Choose $r>0$. By Theorem \ref{PropFin} (a)
 there are only finitely many critical values $c_1 < c_2 < \ldots < c_k$ larger or equal to $r$.
 By Theorem \ref{PropFin} (b) $\L(X, c_k+1, \pi_1)$ is generated by 
 $\L(X, c_1, \pi_1)$ and a collection of geodesic lassos. 
By Proposition \ref{PropLambda} we conclude that $\pi_1(\Rips(X,r))\cong \L(X, c_k+1, \pi_1)/\L(X, c_1, \pi_1)$  is generated by geodesic lassos. By Corollary \ref{CorFG} this collection can be chosen to be finite (see the next paragraph for further technical details). 
  
 The second statement follows similarly from  Proposition \ref{PropLambda} and Theorem \ref{PropFin}(b). It suffices to provide a proof for a kernel of the bonding map going past one critical value only, say $c_1$. Consider for example the kernel of $\pi_1(\Rips(X, c_1/2), \bullet)\to \pi_1(\Rips(X, (c_1+c_2)/2), \bullet)$. It is generated by all $3c_1$-lassos. However, by Lemma \ref{LemmaConvergence} we only need a finite collection of geodesic circles $\alpha_1, \alpha_2, \ldots, \alpha_k$ of length $3 c_1$, so that all possible $\alpha_i$-lassos generate the kernel. Since for each single $i$, all  $\alpha_i$-lassos are conjugates to each other in the fundamental group, it follows that the normal closure of one $\alpha_i$-lasso  contains all other $\alpha_i$-lassos. Consequently, the kernel is the normal closure of a collection, which contains a single $\alpha_i$-lasso for each $i$.
\end{proof}

\begin{Theorem}\label{ThmPerBasis}
 [Persistence-basis correspondence] 
 Suppose $X$ is a compact geodesic space. Then there exist geodesic lassos $\{\beta_i\}_{i\in J}$, with $\beta_i$ being an $l_i$-lasso, so that the following isomorphism holds: 
 $$\big\{\pi_1(\Rips(X,r),\bullet)\big\}_{r>0}\cong\big\{\L(X,fin, \pi_1)/
\{\beta_i\big\}_{l_i<r}
\}_{r>0},$$
with bonding maps of the right-side persistence being the natural quotient maps. The set of critical points coincides with $\{l_i/3\}_{i\in J}$. For each critical value $c$ there exist only finitely many indices $j$ for which $l_j=3c$.

Furthermore, if $X$ is SLSC then $J$ can be chosen to be finite and
 $$\big\{\pi_1(\Rips(X,r),\bullet)\big\}_{r>0}\cong\big\{\pi_1(X, \bullet)/
\{\beta_i\big\}_{l_i<r}
\}_{r>0}.$$
\end{Theorem}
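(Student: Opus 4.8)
The plan is to assemble the Persistence–circumference Theorem~\ref{ThmPerCirc}, Theorem~\ref{PropFin}, Lemma~\ref{LemmaConvergence} and Proposition~\ref{PropLassoGens} into one statement about the whole persistence. By Theorem~\ref{ThmPerCirc} the fillings/samplings maps identify $\{\pi_1(\Rips(X,r),\bullet)\}_{r>0}$ with $\{\L(X,fin,\pi_1)/\L(X,3r,\pi_1)\}_{r>0}$, the bonding maps being the obvious quotient maps. So it suffices to exhibit geodesic $l_i$-lassos $\{\beta_i\}_{i\in J}$ such that, for every $r>0$, the (normal, by Proposition~\ref{PropIneq}) subgroup $\L(X,3r,\pi_1)\leq\pi_1(X,\bullet)$ is exactly the normal closure of $\{\beta_i:l_i<3r\}$; the asserted persistence isomorphism, the identification of the critical set with $\{l_i/3\}_{i\in J}$, and the description of the bonding maps then follow formally. (As $r\to\infty$ the normal closure of the whole family becomes $\L(X,fin,\pi_1)$, which is $\pi_1(X,\bullet)$ precisely when $\L(X,fin,\pi_1)=\pi_1(X,\bullet)$, e.g.\ in the SLSC case by Proposition~\ref{PropLength}(1); this is the sense in which ``normal closure is $\pi_1(X,\bullet)$'' is meant.)

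\textbf{Step 2 (construction along critical values).} By Theorem~\ref{PropFin}(\textbf{a}) and Corollary~\ref{CorDirCrit} the set of critical values has $0$ as its only possible accumulation point; list it as $c_1>c_2>\cdots$. Fix consecutive critical values $q<p$ (treating the top value, and $r$ above all critical values, by taking $p$ arbitrarily large so that $\L(X,3p,\pi_1)=\L(X,fin,\pi_1)$). Since $(q,p)$ contains no critical value, $\L(X,3r,\pi_1)$ is constant for $r\in(q,p]$ and equal to $\L(X,3p,\pi_1)$, which by Theorem~\ref{PropFin}(\textbf{b}) is generated by $\L(X,3q,\pi_1)$ together with a collection of geodesic $3q$-lassos. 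A $3q$-lasso whose loop part is not a geodesic circle lies in $\L(X,3q,\pi_1)$ already (it decomposes, as in the proof of Theorem~\ref{PropFin}(\textbf{b})), and all lassos around a fixed geodesic circle $\alpha$ are mutually conjugate, so $\langle G_\alpha\rangle$ is the normal closure of a single one (the remark following the definition of $G_\alpha$). Hence at level $q$ we may take a family $A_q$ of one geodesic $3q$-lasso per free-homotopy class of geodesic circle of length $3q$ that is genuinely needed.

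\textbf{Step 3 (finiteness per critical value, and the SLSC case).} I claim each $A_q$ can be taken finite. If not, there would be pairwise non-(freely-)homotopic geodesic circles $\alpha_1,\alpha_2,\dots$ of length $3q$ each contributing a new class modulo $\L(X,3q,\pi_1)$. Apply Lemma~\ref{LemmaConvergence} to $(\alpha_j)_j$ with parameter $r=q$: the hypothesis $l_j\to 3q>0$ holds, and since the $\alpha_j$ are geodesic circles the limiting loop $\alpha$ has length exactly $3q$; one gets an infinite $J_0$ with $G_{\alpha_j}\,\L(X,3q,\pi_1)=G_\alpha\,\L(X,3q,\pi_1)$ for all $j\in J_0$, so all but one of these circles is redundant — a contradiction. (For SLSC $X$ this finiteness is Corollary~\ref{CorFinCritValue}(3).) Put $\{\beta_i\}_{i\in J}=\bigsqcup_q A_q$, the disjoint union over critical values $q$; each $A_q$ is finite and, by Theorem~\ref{PropFin}(\textbf{c}) together with the criticality of $q$, nonempty, so $l_j=3c$ holds for exactly $|A_c|<\infty$ indices at each critical value $c$, and Steps~2–3 yield $\L(X,3r,\pi_1)=\langle\!\langle\beta_i:l_i<3r\rangle\!\rangle$ for all $r$. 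If moreover $X$ is SLSC then there are only finitely many critical values (Corollary~\ref{CorFinCritValue}(1)), so $J$ is finite; also $\L(X,fin,\pi_1)=\pi_1(X,\bullet)$ and $\L(X,3r,\pi_1)=0$ for small $r$ (since $\S(X,t,\pi_1)=0$ for small $t$ and $\S(X,s,\pi_1)=\L(X,2s,\pi_1)$ by Proposition~\ref{PropLength}(4)), which turns the right-hand family into $\{\pi_1(X,\bullet)/\langle\!\langle\beta_i:l_i<3r\rangle\!\rangle\}_{r>0}$.

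\textbf{Main obstacle.} The only non-formal point is Step~3: converting the a priori infinite family of geodesic $3q$-lassos produced by Theorem~\ref{PropFin}(\textbf{b}) into a finite one per critical value. This is exactly where the compactness/convergence machinery of Lemma~\ref{LemmaConvergence} is essential, and it must be invoked at the borderline parameter $r=q$, where the filtration levels $3q$ and $3r$ only just separate; everything else is bookkeeping over the set of critical values, which is locally finite away from $0$ by Theorem~\ref{PropFin}(\textbf{a}).
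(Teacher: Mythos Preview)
Your proof is correct and follows essentially the same route as the paper. The paper's proof is the single line ``Follows from Proposition~\ref{PropLassoGens}'', and what you have written is a careful unpacking of that proposition in situ: the reduction via Theorem~\ref{ThmPerCirc} (equivalently Proposition~\ref{PropLambda}), the inductive step over consecutive critical values using Theorem~\ref{PropFin}(\textbf{a},\textbf{b}), and the finiteness-per-level argument via Lemma~\ref{LemmaConvergence} together with the conjugacy observation for $G_\alpha$, are exactly the ingredients assembled in the paper's proof of Proposition~\ref{PropLassoGens}.
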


\begin{proof}
 Follows from Proposition \ref{PropLassoGens}.
\end{proof}

\begin{Remark}
 The question whether lassos $\alpha_i$ in Theorem \ref{ThmPerBasis} form a basis of $\pi_1(X, \bullet)$ and corresponding questions will be studied in a separate paper. It is easy to see though that the lassos may not be independent in the group and thus do not form the least numerous generating set.
\end{Remark}

\begin{Proposition}
 Suppose $X$ is a geodesic space of diameter $D$ and $G$ is an Abelian group. There are no geodesic loops of length more than $2D$ and for $r>2D/3$ groups $\pi_1(\Rips(X,r),\bullet)$ and $H_1(\Rips(X,r),G)$ are trivial. 
\end{Proposition}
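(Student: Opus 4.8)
The plan is to prove the two assertions separately; the triviality statement I would deduce from the Persistence--Spanier correspondence (Theorem \ref{ThmPerSpa}) rather than by a hands-on simplicial computation, since the naive coning over $\bullet$ only works for $r>D$, not for all $r>2D/3$.

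For the bound on geodesic circles, recall that a geodesic circle in $X$ is an isometrically embedded circle carrying its geodesic metric. If $C\subseteq X$ is a geodesic circle of circumference $\ell$ and $p,q\in C$ is a pair of antipodal points, then their distance measured inside $C$ is $\ell/2$, and since the embedding $C\hookrightarrow X$ is isometric we get $d(p,q)=\ell/2$ in $X$. Combined with $d(p,q)\le D=\diam(X)$ this forces $\ell\le 2D$, so no geodesic circle (geodesic loop) can have length exceeding $2D$.

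For the triviality of the Rips persistence above $2D/3$, I would first invoke Theorem \ref{ThmPerSpa}, which supplies isomorphisms
$$\{\pi_1(\Rips(X,r),\bullet)\}_{r>0}\cong\{\pi_1(X,\bullet)/\S(X,3r/2,\pi_1)\}_{r>0}$$
and
$$\{H_1(\Rips(X,r),G)\}_{r>0}\cong\{H_1(X,G)/\S(X,3r/2,G)\}_{r>0}.$$
So it suffices to show that $\S(X,3r/2,\pi_1)=\pi_1(X,\bullet)$ and $\S(X,3r/2,G)=H_1(X,G)$ whenever $r>2D/3$. The key observation is that $\diam(X)=D$ means $X=\cB(\bullet,D)$, so every based loop and every $1$-cycle already lies inside the closed ball $\cB(\bullet,D)$. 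When $r>2D/3$, i.e.\ $D<3r/2$, each loop based at $\bullet$ is therefore a $U_D$-lasso with $D<3r/2$ (take the connecting path constant at $\bullet$) and hence represents an element of $\S(X,3r/2,\pi_1)$; likewise every $1$-cycle has smallest enclosing closed ball of radius $\le D<3r/2$ and so represents an element of $\S(X,3r/2,G)$. Thus both subgroups coincide with the ambient group, the quotients above vanish, and $\pi_1(\Rips(X,r),\bullet)$ and $H_1(\Rips(X,r),G)$ are trivial.

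I do not expect a genuine obstacle here. The only point requiring a little care is matching strict inequalities: the groups $\S(X,r',*)$ are generated by lassos (resp.\ cycles) enclosed in balls of radius \emph{strictly} less than $r'$, which is precisely why the hypothesis is $r>2D/3$ (equivalently $D<3r/2$) and not $r\ge 2D/3$. Should one wish to avoid Theorem \ref{ThmPerSpa}, the same conclusion follows directly by reducing an arbitrary $r$-loop to an $s$-loop with $s<3r-2D$ and then applying the midpoint-subdivision/coning argument from the proof of Theorem \ref{ThmNull} (each resulting five-point $r$-loop has length $\le 2D+s<3r$, hence is $r$-null by Proposition \ref{PropRips}(5)); but routing through the already-established correspondence is shorter and cleaner.
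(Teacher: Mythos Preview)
Your argument is correct. The first part---bounding the length of a geodesic circle by $2D$ via its pair of antipodal points---is exactly what the paper does.

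For the triviality of $\pi_1(\Rips(X,r),\bullet)$ and $H_1(\Rips(X,r),G)$ the two proofs diverge. The paper invokes the Persistence--basis correspondence (Theorem \ref{ThmPerBasis}): since geodesic circles have length at most $2D$, the critical values $l_i/3$ are all $\le 2D/3$, so above that threshold the $\pi_1$-persistence is trivial; the $H_1$ case is then deduced via the Hurewicz Theorem and the Universal Coefficients Theorem. You instead route through the Persistence--Spanier correspondence (Theorem \ref{ThmPerSpa}) and observe directly that $X=\cB(\bullet,D)$ forces $\S(X,3r/2,*)$ to be the whole group once $D<3r/2$. Your approach has two modest advantages: it treats $\pi_1$ and $H_1$ uniformly without appealing to Hurewicz/UCT, and it does not require compactness---note that Theorem \ref{ThmPerBasis}, on which the paper's proof rests, is stated for compact $X$, whereas the Proposition is phrased for an arbitrary geodesic space of finite diameter and Theorem \ref{ThmPerSpa} has no compactness hypothesis. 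The paper's route is terser once the heavier machinery is in place, and highlights the link between critical values and geodesic circles more explicitly.
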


\begin{proof}
 In a geodesic loop of length more than $2D$ there are points at distance more than $D$, a contradiction. The second statement follows by Theorem \ref{ThmPerBasis}. The last part follows by the Hurewicz Theorem and the Universal Coefficients Theorem.
\end{proof}

\subsection{Lasso decompositions}

\begin{Definition}
Suppose $X$ is compact and geodesic. Given an element  $[\alpha]_{\pi_1}\in\pi_1(X, \bullet)$ and $K>0$, a \textbf{lasso} $K$-\textbf{decomposition} is a representation of $[\alpha]_{\pi_1}$ in the form of a finite concatenation of $l_i$-lassos  with $l_i\leq K, \forall i$. The \textbf{critical value} $\crit[\alpha]_{\pi_1}$ is one third of the infimum of all values $K'$, for which there exists a lasso 
$K'$-decomposition. 
\end{Definition}

\begin{Proposition}
 \label{PropLassoDecomp}
 Suppose $X$ is compact, geodesic and SLSC. Choose $[\alpha]_{\pi_1}\in\pi_1(X,\bullet)$.  Then the critical value $\crit[\alpha]_{\pi_1}$ is positive and the following holds:
 $$
 \lambda_r^{\pi_1}([\alpha]_{\pi_1}) = 0 \quad \Leftrightarrow \quad r > \crit[\alpha]_{\pi_1}.
 $$
\end{Proposition}

\begin{proof}
Since $X$ is SLSC and compact,  map $\lambda_{r'}^{\pi_1}$ is an isomorphism for small $r'$, implying $\crit[\alpha]_{\pi_1}>0$, and trivial for large $r'$. By  Theorem \ref{ThmPerCirc} and Proposition \ref{PropLength}, the following holds:  $ \lambda_r^{\pi_1}([\alpha]_{\pi_1}) = 0$ if and only if there exists a lasso $r'$-decomposition of $[\alpha]_{\pi_1}$ for some $r'<r$. By Corollary \ref{CorFinCritValue} there are only finitely many critical values $c_1, c_2, \ldots, c_k$ of persistence and all of them are right critical values, implying the interval $\{r\mid  \lambda_r^{\pi_1}([\alpha]_{\pi_1}) = 0\}$ is an open interval. These are the only parameters at which $ \lambda_r^{\pi_1}([\alpha]_{\pi_1})$ may become trivial. Given a critical value $c_m$ the corresponding kernel is generated by a finite collection $A_m$ consisting of some (equivalently, all) geodesic $3c_1$-lassos, $3c_2$-lassos, .. and $3c_m$-lassos, by Proposition \ref{PropLassoGens}. Hence $ \lambda_r^{\pi_1}([\alpha]_{\pi_1})$ becomes trivial at $c_m$ if and only if it can be expressed by elements of $A_k$, meaning that $\crit[\alpha]_{\pi_1}=c_m$.
\end{proof}

\subsection{Localization theorem}

We conclude the section with an intrinsic version of a localization theorem. In TDA, a localization problem is a hard problem of finding geometric features (in a space) corresponding to critical values, for example loops $\beta_i$ of Theorem \ref{ThmPerBasis}. It turns out that in the intrinsic setting the optimal representatives  (in terms of a  $r$-lasso representative with the smallest $r$) can be obtained from the vertices of the critical triangles by fillings.

\begin{Definition}
 \label{DefCritTri}
 Suppose $X$ is a compact geodesic space and $r>0$. A triangle $\sigma\in \cRips(X, r)$ with vertices  $x_1, x_2, x_3$ is \textbf{critical} if the kernel of the inclusion induced map $\pi_1(\Rips(X, r), \bullet)\to \pi_1(\Rips(X, r)\cup \sigma, \bullet)$ is nontrivial, where adding $\sigma$ to the Rips complex implies we attach it along some $(r/2)$-sample of some filling $\alpha_\sigma$ of the $2r$-loop $x_1, x_2, x_3, x_1$. That means that we essentially  attach a hexagon, which is topologically equivalent to attaching a disc along the filling of $x_1, x_2, x_3, x_1$. These complications arise since the boundary edges of $\sigma$ are not in $\Rips(X, r)$. Note that in this case $\pi_1(\Rips(X, r), \bullet)$ and  $\pi_1(\Rips(X, r)\cup \sigma, \bullet)$ may still be isomorphic.
\end{Definition}

\begin{Theorem}\label{ThmIntrLoc}
 [Intrinsic Localization theorem]
 Suppose $X$ is a compact geodesic space and $r>0$. If $\sigma\in \cRips(X, r)$ with vertices  $x_1, x_2, x_3$ is a critical triangle then:
\begin{enumerate}
 \item points $x_1, x_2, x_3$ are at pairwise distance $r$;
 \item no $\alpha_\sigma$-lasso is contained in $\L(X, 3r, \pi_1)$, where $\alpha_\sigma$ is a filling of $x_1, x_2, x_3, x_1$ and has length $3r$.
\end{enumerate}
Furthermore, the kernel of $\pi_1(\Rips(X, 3r), \bullet) \to \pi_1(\Rips(X, 3r'), \bullet)$, where $r<r'$ are consecutive critical values, is generated by all $\alpha'$-lassos, where $\alpha'$ ranges through  fillings of vertices of each of the critical triangles in $\cRips(X, 3r)$. 

Conversely, if $\alpha$ is a loop in $X$ of length $3r$ so that some (equivalently, each) $\alpha$-lasso is not contained in $\L(X, 3r, \pi_1)$, then $\alpha$ is a geodesic circle, $r$ is a critical value and any three equidistant points on $\alpha$ form vertices of a critical triangle $\sigma\in \cRips(X, 3r)$.  
\end{Theorem}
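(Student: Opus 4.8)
The plan is to funnel the whole theorem through a single equivalence: writing $M$ for the $(r/2)$-sample of a filling $\alpha_\sigma$ of the loop $x_1,x_2,x_3,x_1$ along which $\sigma$ is attached,
$$\sigma\text{ critical}\iff M\text{ is not }r\text{-null}\iff\text{no }\alpha_\sigma\text{-lasso lies in }\L(X,3r,\pi_1).$$
The first equivalence comes from unwinding Definition \ref{DefCritTri}: attaching $\sigma$ is, up to homotopy, attaching a $2$-cell along $M$, so by van Kampen the kernel of $\pi_1(\Rips(X,r),\bullet)\to\pi_1(\Rips(X,r)\cup\sigma,\bullet)$ is the normal closure of $[\delta*M_p*\delta^-]_{\pi_1}$, where $M_p$ is $M$ based at one of its vertices $p$ and $\delta$ is an edge-path in $\Rips(X,r)$ from $\bullet$ to $p$; this is nontrivial exactly when $M$ is not (freely, equivalently based) nullhomotopic in $\Rips(X,r)$. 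That none of the choices (of $\alpha_\sigma$, of $(r/2)$-sample, of $p$, of $\delta$) matters follows from Proposition \ref{PropRips}(4) and from the fact that two fillings of $x_1,x_2,x_3,x_1$ differ by a concatenation of loops of length $2r<3r$ (Proposition \ref{PropDif}), together with normality of $\L(X,3r,\pi_1)$ (Proposition \ref{PropIneq}).

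Granting the equivalence, part (1) is immediate: if the three pairwise distances are not all equal to $r$, then $\length(\alpha_\sigma)=d(x_1,x_2)+d(x_2,x_3)+d(x_3,x_1)<3r$, so $M$ is $r$-null by Proposition \ref{PropRips}(5) and $\sigma$ is not critical; hence the distances all equal $r$ and $\length(\alpha_\sigma)=3r$. Part (2) is the forward direction of the second equivalence. For the converse statement at the end, suppose $\alpha$ is a loop of length $3r$ no $\alpha$-lasso of which lies in $\L(X,3r,\pi_1)$; once we know $\alpha$ is a geodesic circle, three equidistant points on it are at pairwise $d_X$-distance $r$ (isometric embedding) and the three arcs between them are geodesics, so these points span a triangle $\sigma\in\cRips(X,r)$ with $\alpha$ itself an admissible $\alpha_\sigma$, and the equivalence makes $\sigma$ critical; note three equidistant points on a geodesic circle of length $3r$ always lie in $\cRips(X,r)$, which fixes the scale in that clause.

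It remains, for the converse, to prove that $\alpha$ is a geodesic circle and that $r$ is a critical value. For the first: parametrize $\alpha$ by arc length on $[0,3r]$; if it is not an isometric embedding there are $s<t$ with $\{s,t\}\neq\{0,3r\}$ and $d_X(\alpha(s),\alpha(t))$ strictly below both arcs joining $\alpha(s)$ to $\alpha(t)$, and cutting $\alpha$ along the shortcut geodesic expresses an $\alpha$-lasso based at $\alpha(s)$ as a concatenation of an $\alpha_1$-lasso and an $\alpha_2$-lasso with $\length(\alpha_i)<3r$, forcing every $\alpha$-lasso into $\L(X,3r,\pi_1)$, a contradiction. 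For the second: an $\alpha$-lasso $\beta$ is a $3r$-lasso, so $[\beta]_{\pi_1}\in\L(X,3s,\pi_1)$ for every $s>r$, hence $\rho_s^{\pi_1}([\beta]_{\pi_1})=0$ for $s>r$ (Proposition \ref{PropIneq} and Proposition \ref{PropPrePers}(2)), while $[\beta]_{\pi_1}\notin\L(X,3r,\pi_1)$ gives $\rho_r^{\pi_1}([\beta]_{\pi_1})\neq 0$ (Theorem \ref{ThmPerCirc}); since $\rho^{\pi_1}$ commutes with the bonding maps (Proposition \ref{PropPrePers}(3)), which are surjective (Proposition \ref{PropRips}(9)), the bonding map $\pi_1(\Rips(X,r),\bullet)\to\pi_1(\Rips(X,s),\bullet)$ is not injective, hence not an isomorphism, for all $s>r$, so $r$ is a right critical value.

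For the localization of the kernel past one critical value: with $c<c'$ consecutive critical values and the identification $\pi_1(\Rips(X,s),\bullet)\cong\L(X,fin,\pi_1)/\L(X,3s,\pi_1)$ of Theorem \ref{ThmPerCirc}, the kernel of $\pi_1(\Rips(X,c),\bullet)\to\pi_1(\Rips(X,c'),\bullet)$ is $\L(X,3c',\pi_1)/\L(X,3c,\pi_1)$, which by Theorem \ref{PropFin}\,\textbf{b} is generated by classes of geodesic $3c$-lassos; by the converse just proved each such lasso comes from a geodesic circle of length $3c$ that is an admissible filling of the critical triangle on any three of its equidistant points, and conversely (by parts (1)--(2) and the converse) each critical triangle of $\cRips(X,c)$ has a filling that is a geodesic circle of length $3c$ whose lassos land in $\L(X,3c',\pi_1)$, hence in the kernel — so the kernel is exactly generated by the classes of $\alpha'$-lassos for $\alpha'$ ranging over fillings of vertices of the critical triangles of $\cRips(X,c)$ (and, via Theorem \ref{PropFin}, only finitely many $\alpha'$ up to free homotopy are needed). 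The hard part will be the middle step of the key equivalence — comparing the filling of the based $r$-loop $\delta*M_p*\delta^-$ with an honest $\alpha_\sigma$-lasso. The clean way is to choose the $(r/2)$-sample so that the $\alpha_\sigma$-arc between consecutive sample points has length (not merely diameter) below $r/2$; then a Proposition \ref{PropDif}-type estimate puts the discrepancy on the $\alpha_\sigma$-part into $\L(X,r,\pi_1)$, normality of $\L(X,3r,\pi_1)$ absorbs the conjugating basepoint path, Proposition \ref{PropNull} converts $r$-nullity of $\delta*M_p*\delta^-$ into membership in $\L(X,3r,\pi_1)$, and the observation that $\langle G_{\alpha_\sigma}\rangle$ is the normal closure of a single $\alpha_\sigma$-lasso upgrades ``some'' to ``no''. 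Everything else is a formal consequence of the correspondence theorems of Section \ref{SectFundGroup} and Theorem \ref{PropFin}.
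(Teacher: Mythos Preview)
Your proof is correct and follows essentially the same route as the paper's, which is much terser: the paper handles (1) and (2) by the same ``attaching loop has length $<3r$ hence is contractible'' argument, cites Theorem \ref{ThmPerBasis} for the kernel claim, and dismisses the entire converse paragraph with ``by Theorem \ref{ThmPerCirc}''. Your explicit van Kampen setup, the shortcut decomposition showing $\alpha$ must be a geodesic circle, and the $\rho^{\pi_1}$ argument for the critical value fill in steps the paper leaves implicit but do not depart from its strategy.
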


\begin{proof}
Suppose $\sigma$ is a critical simplex. If points $x_1, x_2, x_3$ were not at the  pairwise distance $r$, then the $(r/2)$-loop along which we are attaching $\sigma$ to $\Rips(X, r)$ would be of length less than $3r$, hence would be contractible. Such an attaching could not change the fundamental group hence (1) holds.  A similar argument proves (2). The third claim follows from the representation of the kernels in question by geodesic lassos in  Theorem \ref{ThmPerBasis}.

The claims of the second paragraph hold by Theorem \ref{ThmPerCirc}.
\end{proof}

\section{The structure of $H_1$-persistence and minimal generators}
\label{SectHStruc}

In this section we provide homological versions of results of  Sections \ref{SectLimit} and \ref{SectStruc}. Proofs are omitted, as they are a simplification of the ones for the fundamental group and often follow from them directly using the Hurewicz Theorem and the Universal Coefficients Theorem. Instead we provide references to the corresponding statements on the fundamental groups. We also include Subsection \ref{SubsectMin} containing the Minimal Generators Theorem in the case of $G$ being a field. It states that the $1$-dimensional PD  encodes the lengths of the elements of the  shortest generating base.

\begin{Lemma}\label{LemmaHConvergence}
[A version of Lemma \ref{LemmaConvergence}]
 Suppose $X$ is a compact geodesic space, $r>0$ and $G$ is an Abelian group. For each $i\in \NN$ let $\alpha_i$ be a  loop in $X$ of length $l_i$ and let $l_i\to l>0$ as $i \to \infty$. Then there exists a loop $\alpha$ of length at most $l$ in $X$ and an infinite set $J\subset \NN$ so that 
$$
[\alpha_j]_G \ \L(X, 3r, G) =[\alpha]_G \ \L(X, 3r, G), \quad \forall j\in J.
$$
(If all $\alpha_i$ are geodesic circles then  the length of $\alpha$ equals $l$.)
\end{Lemma}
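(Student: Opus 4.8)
The plan is to mirror the proof of Lemma \ref{LemmaConvergence} almost verbatim, discarding the basepoint bookkeeping which is the only genuinely delicate part of the $\pi_1$-version. First I would set $k=\lceil 2\sup_i\{l_i\}/r\rceil+1$ and, for each $i$, pick an $(r/2)$-sample $L_i$ of $\alpha_i$ of size exactly $k$, with vertices $x_0^i,\ldots,x_k^i,x_{k+1}^i=x_0^i$, arranged so that each path segment of $\alpha_i$ between consecutive sample points has length less than $r/2$. By compactness of $X$, passing to a subsequence we may assume $x_j^i\to x_j$ for every $j$. Let $L$ be the $r$-loop $x_0,\ldots,x_k,x_{k+1}=x_0$ and let $\alpha$ be a filling of $L$. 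Continuity of the metric gives
$$
\length(\alpha)=\sum_{j=1}^k d(x_j,x_{j+1})=\lim_{i\to\infty}\sum_{j=1}^k d(x_j^i,x_{j+1}^i)\le \lim_{i\to\infty}l_i=l,
$$
with equality throughout when each $\alpha_i$ is a geodesic circle (since then each segment has length exactly $d(x_j^i,x_{j+1}^i)$ and the segments exhaust $\alpha_i$).

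Next I would invoke Proposition \ref{PropRips}(8): since $x_j^i\to x_j$ and the relevant distances are bounded away from $r$ (each is $<r/2$, and all pairwise distances in $L$ are $\le r$ by construction, with strict slack coming from the $r/2$-sampling), there is $N\in\NN$ such that $L_i$ is $r$-homotopic to $L$ for all $i>N$; set $J$ to be the pull-back of $\{i>N\}$ to the original index set. For such $i$, I want $[\alpha_i]_G\,\L(X,3r,G)=[\alpha]_G\,\L(X,3r,G)$ in $H_1(X,G)/\L(X,3r,G)$. Fix a filling $\tilde\alpha_i$ of $L_i$. The argument now combines three inputs exactly as in Lemma \ref{LemmaConvergence}: (i) the estimate from the first paragraph together with the homological version of Proposition \ref{PropDif} (or directly Proposition \ref{PropKer}, since the loop $\alpha_i|_{[t_m^i,t_{m+1}^i]}*(\text{geodesic})^-$ is short) shows $[\alpha_i]_G-[\tilde\alpha_i]_G\in\L(X,2r,G)\subseteq\L(X,3r,G)$; (ii) $r$-homotopy of $L_i$ and $L$ together with the homological Proposition \ref{PropNull}(2) shows $[\tilde\alpha_i]_G-[\alpha]_G\in\L(X,3r,G)$; adding these gives $[\alpha_i]_G-[\alpha]_G\in\L(X,3r,G)$, i.e.\ the two classes agree modulo $\L(X,3r,G)$. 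No normality or conjugation step is needed since $H_1(X,G)$ is abelian, which is precisely the simplification promised in the section preamble.

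I do not anticipate a genuine obstacle here; the only point requiring a little care is ensuring the distance inequalities have enough strict slack to apply Proposition \ref{PropRips}(8), which is why the $(r/2)$-sampling (rather than an $r$-sampling) is used — this guarantees consecutive sample points are within $r/2<r$ and that the limiting $r$-loop $L$ genuinely lies in $\Rips(X,r)$ with the pointwise-closeness hypothesis of (8) eventually satisfied. The geodesic-circle addendum is immediate from the equality case in the displayed length computation. Accordingly I would simply write: the proof is identical to that of Lemma \ref{LemmaConvergence}, replacing every occurrence of a based lasso $\gamma*\alpha*\gamma^-$ and the group $G_\alpha$ by the homology class $[\alpha]_G$, dropping all the prolongation paths $\gamma,\gamma_i$, and using the homological halves of Propositions \ref{PropDif}, \ref{PropKer}, and \ref{PropNull} in place of their $\pi_1$ counterparts; the abelian structure of $H_1(X,G)$ makes the normal-closure argument unnecessary.
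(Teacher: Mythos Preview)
Your proposal is correct and is exactly the approach the paper intends: the paper omits the proof entirely, stating only that it is a simplification of Lemma~\ref{LemmaConvergence} obtained by dropping the basepoint bookkeeping, which is precisely what you have written out. Your combination of the $(r/2)$-sampling, compactness, Proposition~\ref{PropRips}(8), and the homological halves of Propositions~\ref{PropDif} and~\ref{PropNull} mirrors the $\pi_1$-argument faithfully, and your observation that abelianness eliminates the normal-closure step is the promised simplification.
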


\begin{Theorem}
 \label{PropHFin}
 [A version of Theorem \ref{PropFin}, Corollary \ref{CorDirCrit} and Proposition \ref{PropLassoGens}]
 Suppose $X$ is compact and geodesic,  and let $q$ be a critical value for $H_1(\_, G)$ persistence of $X$ via open Rips filtration.
 \begin{description}
 \item [a] For each $r>0$ there exist only finitely many critical values, which are greater than $r$. 
\item [b] Let $q<p$ be a pair of consecutive critical values. Group $\L(X, 3p, G)$ is generated by  $\L(X, 3q, G)$ and a finite collection of geodesic circles of length $3q$.
 \item [c] There exists a geodesic circle of length $3q$. 
 \item [d] All critical values are right critical values.
 \item [e] First homology groups of each Rips complex is generated by a finite collection of geodesic circles. Each kernel of a bonding map is a generated by a finite collection of geodesic circles.  
 \item[f] If $G=\FF$ is a field then the corresponding persistence is q-tame in a sense of \cite{Cha1}.
\end{description}
\end{Theorem}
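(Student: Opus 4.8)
The plan is to replay the proofs of Theorem \ref{PropFin}, Corollary \ref{CorDirCrit} and Proposition \ref{PropLassoGens} in the homological setting, where every step simplifies: there are no basepoints, lassos become cycles, the families $G_\alpha$ collapse to single classes $[\alpha]_G$, and ``normal closure'' becomes ``subgroup generated by''. Throughout I would replace Lemma \ref{LemmaConvergence} by Lemma \ref{LemmaHConvergence}, the correspondence isomorphisms $\{\pi_1(\Rips(X,r),\bullet)\}_{r>0}\cong\{\L(X,fin,\pi_1)/\L(X,3r,\pi_1)\}_{r>0}$ and $\cong\{\pi_1(X,\bullet)/\S(X,3r/2,\pi_1)\}_{r>0}$ (Theorems \ref{ThmPerCirc}, \ref{ThmPerSpa} and Proposition \ref{PropLambda}) by their $H_1(\_,G)$-versions, which are also part of those statements, and finite generation of $\pi_1$ of a Rips complex by finite generation of $H_1(\Rips(X,r),G)$ for an arbitrary abelian group $G$ (Corollary \ref{CorFG}). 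As in the $\pi_1$ case, no local connectivity hypothesis is needed.

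For \textbf{a}, \textbf{b}, \textbf{c} I would run the contradiction argument of the proof of Theorem \ref{PropFin} essentially verbatim, using only the homological Proposition \ref{PropLambda} and Lemma \ref{LemmaHConvergence}. For \textbf{a}: were there infinitely many critical values above $r$, take a strictly monotone subsequence $t_i\to t\ge r$, bounded above by $2\diam(X)/3$ via Proposition \ref{PropHPrePers}(2), and use Proposition \ref{PropLambda} to pick cycles $\alpha_i$ whose lengths strictly interleave the $t_i$ and which are not expressible at the previous scale; Lemma \ref{LemmaHConvergence} then yields $j\ne k$ with $[\alpha_j]_G\,\L(X,3r,G)=[\alpha_k]_G\,\L(X,3r,G)$ although $t_j\ne t_k$, a contradiction. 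For \textbf{b}, I would apply the same reasoning on the open interval between the consecutive critical values $q<p$, to cycles of length just above $3q$; this shows $\L(X,3p,G)$ is generated by $\L(X,3q,G)$ together with cycles of length exactly $3q$, and since a loop of length $3q$ that is not a geodesic circle splits along a short chord into cycles of length $<3q$ (hence already lies in $\L(X,3q,G)$), this generating collection may be taken to consist of geodesic circles of length $3q$. Statement \textbf{c} is then immediate, since if $q$ is critical the collection of \textbf{b} cannot be empty.

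For \textbf{d} I would copy the proof of Corollary \ref{CorDirCrit}: a homology class that dies at scale $c$ is bounded by finitely many $2$-simplices of $\Rips(X,c)$, hence already dies at some $c'<c$, and together with the local finiteness from \textbf{a} this rules out left critical values. For \textbf{e}, I would mimic Proposition \ref{PropLassoGens}: combining \textbf{a}, \textbf{b} and the homological Proposition \ref{PropLambda} presents $H_1(\Rips(X,r),G)$ as $\L(X,C,G)/\L(X,3c_1,G)$ for $C$ past the largest critical value, generated by finitely many geodesic circles; for a kernel of a bonding map crossing a single critical value $c$ it is generated by all cycles of length $3c$, and Lemma \ref{LemmaHConvergence} reduces this to finitely many geodesic circles of length $3c$. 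Finally \textbf{f} is immediate and is essentially Remark \ref{RemTame}: by Corollary \ref{CorFG} every $H_1(\Rips(X,r),\FF)$ is finite dimensional, so every bonding map has finite rank, which is the definition of q-tameness. The only non-routine point in the whole argument is the appearance of ``finite'' in \textbf{b} and \textbf{e}, which has no analogue in the $\pi_1$ statements: one must note that subgroups of a finitely generated abelian group are again finitely generated --- with Corollary \ref{CorFG} supplying that ambient finite generation for an arbitrary coefficient group $G$, applied to the subgroup $\L(X,3p,G)/\L(X,3q,G)$ of $H_1(\Rips(X,q),G)$ --- together with Lemma \ref{LemmaHConvergence} to extract a finite subfamily of geodesic circles at each critical length.
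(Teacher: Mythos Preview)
Your proposal is correct and follows essentially the same route as the paper, which simply states that the homological proofs are simplifications of the $\pi_1$ ones and then comments only on the finiteness in \textbf{b}: the paper deduces it directly from Lemma \ref{LemmaHConvergence} (infinitely many pairwise inequivalent geodesic circles of length $3q$ would contradict the lemma applied with parameter $q$). Your additional Noetherian observation---that $\L(X,3p,G)/\L(X,3q,G)$ sits inside the finitely generated $H_1(\Rips(X,q),G)$ via Corollary \ref{CorFG}---is a valid alternative the paper does not invoke; note, however, that once you know this quotient is generated by the classes of geodesic circles of length $3q$, finite generation alone already lets you extract a finite subfamily of those circles, so a second appeal to Lemma \ref{LemmaHConvergence} is unnecessary there.
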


\begin{proof}
Only the finiteness in \textbf{b} requires a comment. It follows from Lemma \ref{LemmaHConvergence}  that at most finitely many different classes in $\L(X, 3p, G)$ have as a representative a geodesic circle of length $3q$.
\end{proof}

\begin{Corollary}\label{CorHFinCritValue}
[A version of Corollary \ref{CorFinCritValue}]
  Let $G$ be an Abelian group. If $X$ is compact, geodesic, and $G$-SLSC then 
\begin{enumerate}
 \item  there exist only finitely many critical values for $H_1(\_, G)$ persistence of $X$ via open Rips filtration;
 \item   $H_1(X,G)$ is generated by a finite collection of geodesic circles; 
 \item if $q$ is a critical value for $H_1(\_, G)$ persistence of $X$ via open Rips filtration then there exist at most finitely many (and at least one) pairwise  non-homologous (in $H_1(X, G)$) geodesic circles of length $3q$;
 \item there exist at most finitely many  pairwise  non-homologous in ($H_1(X, G)$) geodesic circles in $X$.
\end{enumerate}
\end{Corollary}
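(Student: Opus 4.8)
The plan is to run the proof of Corollary \ref{CorFinCritValue} essentially unchanged, replacing $\pi_1$ by $H_1(\_,G)$ and the homotopy-theoretic inputs by the homological ones collected in this section (Lemma \ref{LemmaHConvergence}, Theorem \ref{PropHFin}, and Propositions \ref{PropHPrePers}, \ref{PropHLength}). The one fact to set up first is the vanishing of the small-scale length subgroups: since $X$ is compact and $G$-SLSC, Proposition \ref{PropHPrePers}(4) provides $r_0>0$ with $\rho_r^G$ an isomorphism for $r<r_0$, so $\S(X,3r/2,G)=\ker\rho_r^G=0$ by Proposition \ref{PropHPrePers}(2), and then $\L(X,3r,G)=\S(X,3r/2,G)=0$ for $r<r_0$ by Proposition \ref{PropHLength}(2) (which requires compactness). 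In particular all critical values are bounded below by a positive constant, so (1) is immediate from Theorem \ref{PropHFin}\textbf{a}.

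For (2), list the finitely many critical values $c_1<\dots<c_k$ provided by (1). One has $\L(X,3c_1,G)=0$ because $c_1$ is right-critical only, so $\rho_{c_1}^G$ is still an isomorphism; then an inductive application of Theorem \ref{PropHFin}\textbf{b} writes each $\L(X,3c_{i+1},G)$ as generated by $\L(X,3c_i,G)$ together with a finite set of geodesic circles, and since $\L(X,fin,G)=H_1(X,G)$ by Proposition \ref{PropHLength}(1), the union of these finite sets generates $H_1(X,G)$; Corollary \ref{CorFG} may be invoked to keep the count finite, exactly as in the $\pi_1$ case.

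For (3), suppose for contradiction that there are infinitely many pairwise non-homologous geodesic circles $\alpha_i$ of length $3q$. Feed this sequence (constant lengths $l_i=3q$, converging to $3q>0$) into Lemma \ref{LemmaHConvergence}: it yields a loop $\alpha$ and an infinite $J\subseteq\NN$ with $[\alpha_j]_G\,\L(X,3r,G)=[\alpha]_G\,\L(X,3r,G)$ for all $j\in J$, where $r$ is chosen below $r_0$. Since $\L(X,3r,G)=0$ this forces $[\alpha_j]_G=[\alpha]_G$ for every $j\in J$, so infinitely many of the $\alpha_i$ are homologous, a contradiction; existence of at least one geodesic circle of length $3q$ is Theorem \ref{PropHFin}\textbf{c}. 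Statement (4) is the same argument run over all geodesic circles: a geodesic circle of circumference $\ell$ has diameter $\ell/2$, hence lies in a closed ball of radius $\ell/2$, so it is nullhomologous once $\ell<2r_0$ (using $\S(X,r_0,G)=0$ again), while no geodesic loop has length above $2\diam(X)$; thus the lengths of the non-nullhomologous geodesic circles lie in the compact interval $[2r_0,2\diam(X)]$, and from a hypothetical infinite pairwise non-homologous family we may extract a subsequence with lengths converging to some $l>0$, whereupon Lemma \ref{LemmaHConvergence} again produces infinitely many homologous circles.

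The argument carries no real obstacle — the section intro is right that these are simplifications of the $\pi_1$ proofs — and the two places to be attentive are exactly the ones above: assembling the chain $G\text{-SLSC}\Rightarrow\rho_r^G\text{ iso}\Rightarrow\S(X,\cdot,G)=0\Rightarrow\L(X,3r,G)=0$ correctly through Propositions \ref{PropHPrePers} and \ref{PropHLength} (the latter needing compactness), and the orientation bookkeeping of a geodesic circle, so that pairwise non-homology is read up to the sign distinguishing $[\alpha]_G$ from $[\alpha^-]_G=-[\alpha]_G$; a careless reading of that equivalence is the only spot where the contradiction could misfire.
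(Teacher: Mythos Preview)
Your argument is correct and follows essentially the same route as the paper: the paper omits the proof of Corollary~\ref{CorHFinCritValue} and points back to the proof of Corollary~\ref{CorFinCritValue}, and you have faithfully transcribed that proof into the $H_1(\_,G)$ setting, replacing Lemma~\ref{LemmaConvergence} and Theorem~\ref{PropFin} by Lemma~\ref{LemmaHConvergence} and Theorem~\ref{PropHFin}, and making the chain $G$-SLSC $\Rightarrow \S(X,\cdot,G)=0 \Rightarrow \L(X,3r,G)=0$ explicit via Propositions~\ref{PropHPrePers} and~\ref{PropHLength}. Your write-up is in fact more careful than the paper's sketch (e.g.\ noting in (4) that at most one class can be trivial, so the bound applies to all but one of the hypothetical infinite family).
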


\begin{Theorem}\label{ThmHPerBasis}
[A version of Theorem \ref{ThmPerBasis}]
 [Persistence-basis correspondence] 
 Suppose $X$ is a compact geodesic space and $G$ is an Abelian group. Then there exist geodesic circles $\{\beta_i\}_{i\in J}$ of length $l_i=\length(\beta_i)$, generating $H_1(X, G)$, so that the following isomorphism holds: 
 $$\big\{H_1(\Rips(X,r),G)\big\}_{r>0}\cong\big\{\L(X,fin, G)/
\{\beta_i\big\}_{l_i<r}
\}_{r>0},$$
with bonding maps of the right-side persistence being the natural quotient maps. The set of critical points coincides with $\{l_i/3\}_{i\in J}$. For each critical value $c$ there exist only finitely many indices $j$ for which $l_j=3c$.

Furthermore, if $X$ is $G$-SLSC then $J$ can be chosen to be finite and
 $$\big\{H_1(\Rips(X,r),G)\big\}_{r>0}\cong\big\{H_1(X, G)/
\{\beta_i\big\}_{l_i<r}
\}_{r>0}.$$
\end{Theorem}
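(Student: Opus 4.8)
The plan is to reduce Theorem~\ref{ThmHPerBasis} to the combination of Theorem~\ref{PropHFin} and Theorem~\ref{ThmPerCirc}, exactly as Theorem~\ref{ThmPerBasis} was reduced to Proposition~\ref{PropLassoGens}. First I would invoke Theorem~\ref{ThmPerCirc} to identify the Rips persistence $\{H_1(\Rips(X,r),G)\}_{r>0}$ with $\{\L(X,fin,G)/\L(X,3r,G)\}_{r>0}$ via the isomorphisms $\lambda_r^G$, which intertwine the bonding maps with the natural quotient maps. So it suffices to exhibit a generating set $\{\beta_i\}_{i\in J}$ of $H_1(X,G)$ by geodesic circles such that for every $r>0$ the subgroup $\L(X,3r,G)$ is precisely the subgroup generated by those $\beta_i$ with $l_i=\length(\beta_i)<3r$; then the quotient $\L(X,fin,G)/\L(X,3r,G)$ is the quotient $\L(X,fin,G)/\langle\{\beta_i\}_{l_i<r}\rangle$ appearing in the statement (note the factor $3$ absorbed into the indexing $l_i<3r \leftrightarrow l_i/3<r$, which also explains why the critical set is $\{l_i/3\}$).

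Next I would construct $J$ by sweeping through the critical values. By Theorem~\ref{PropHFin}\,\textbf{a} there are only finitely many critical values above any threshold; enumerate them (they may still accumulate only at $0$, so there could be infinitely many, decreasing to $0$). For each pair $q<p$ of consecutive critical values, Theorem~\ref{PropHFin}\,\textbf{b} says $\L(X,3p,G)$ is generated by $\L(X,3q,G)$ together with a \emph{finite} collection of geodesic circles of length $3q$; collect all these circles over all consecutive pairs, and also throw in the generators needed below the smallest relevant scale. Since between two consecutive critical values $\L(X,3r,G)$ is constant in $r$, the resulting family $\{\beta_i\}$ has the property that adjoining $\beta_i$ exactly at the parameter $r=l_i/3$ builds up $\L(X,3r,G)$ correctly as $r$ increases; this is the telescoping argument from Proposition~\ref{PropLassoGens}. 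That the $\beta_i$ generate all of $\L(X,fin,G)$ — hence, by Proposition~\ref{PropHLength}(1) applied in the $G$-SLSC case, all of $H_1(X,G)$, and in general all of $\L(X,fin,G)$ which is what the first displayed isomorphism needs — follows from \textbf{b} applied inductively up to $r>2\diam(X)/3$, past which the groups vanish.

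For the ``furthermore'' clause, if $X$ is $G$-SLSC then by Corollary~\ref{CorHFinCritValue}(1) there are only finitely many critical values, so the sweep terminates after finitely many steps and $J$ is finite; moreover $\L(X,fin,G)=H_1(X,G)$ by Proposition~\ref{PropHLength}(1), so the quotient in the displayed isomorphism becomes $H_1(X,G)/\langle\{\beta_i\}_{l_i<r}\rangle$ as claimed. The count ``for each critical value $c$ there are only finitely many $j$ with $l_j=3c$'' is exactly the finiteness assertion inside Theorem~\ref{PropHFin}\,\textbf{b}, which in turn rests on Lemma~\ref{LemmaHConvergence} (at most finitely many homology classes in $\L(X,3p,G)$ are represented by a geodesic circle of length $3q$).

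The main obstacle I anticipate is bookkeeping rather than a genuine mathematical difficulty: one must be careful that the generating set can be chosen so that $\L(X,3r,G)$ equals the subgroup generated by $\{\beta_i\}_{l_i<3r}$ for \emph{every} $r$ simultaneously (an open vs.\ closed endpoint issue at each critical value), and that the isomorphism $\lambda_r^G$ of Theorem~\ref{ThmPerCirc} genuinely transports these subgroups — i.e.\ that $\lambda_r^G$ sends the class of a geodesic circle to (the class of) its own filling, which it does since a geodesic circle is its own filling up to reparameterization. Once the openness-of-criticality (Theorem~\ref{PropHFin}\,\textbf{d}) is used to pin down the endpoints, the identification is forced and the theorem follows.
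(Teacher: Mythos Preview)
Your proposal is correct and follows essentially the same route the paper takes: the paper does not prove Theorem~\ref{ThmHPerBasis} directly but simply declares it to be the homological analogue of Theorem~\ref{ThmPerBasis} (whose proof in turn is a one-line appeal to Proposition~\ref{PropLassoGens}), and you have accurately unpacked that analogy by invoking Theorem~\ref{ThmPerCirc} for the identification with $\L(X,fin,G)/\L(X,3r,G)$ and Theorem~\ref{PropHFin}\,\textbf{a},\,\textbf{b},\,\textbf{e} (the homological counterpart of Proposition~\ref{PropLassoGens}) together with Lemma~\ref{LemmaHConvergence} for the telescoping construction of the $\beta_i$ and the finiteness at each critical level. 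Your handling of the $G$-SLSC clause via Corollary~\ref{CorHFinCritValue}(1) and Proposition~\ref{PropHLength}(1), and your flagging of the factor-of-$3$ bookkeeping in the indexing $l_i<3r$ versus $l_i<r$, are exactly the points one needs to make explicit.
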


\subsection{Minimal loop decompositions}

\begin{Definition}
Suppose $G$ is an Abelian group and $X$ is compact and geodesic. Given an element  $[\alpha]_{G}\in H_1(X, G)$ and $K>0$, a \textbf{loop} $K$-\textbf{decomposition} is a representative of $[\alpha]_{G}$ in the form of a finite sum of loops of length $l_i$  with $l_i\leq K, \forall i$. The \textbf{critical value} $\crit[\alpha]_{G}$ is one third of the infimum of all values $K'$, for which there exists a loop 
$K'$-decomposition. 
\end{Definition}

\begin{Proposition}
 \label{PropHLassoDecomp}
 Suppose $G$ is an Abelian group and $X$ is compact, geodesic and $G$-SLSC. Choose $[\alpha]_{G}\in H_1(X,G)$.  Then the critical value $\crit[\alpha]_{G}$ is positive and the following holds:
 $$
 \lambda_r^{G}([\alpha]_{G}) = 0 \quad \Leftrightarrow \quad r > \crit[\alpha]_{G}.
 $$
\end{Proposition}

\begin{proof}
The proof is essentially the same as that of Proposition \ref{PropLassoDecomp}.
\end{proof}

\subsection{Minimal generating set}
\label{SubsectMin}

Assume now (for this subsection) that $\FF$ is a field and $X$ is compact, geodesic and $\FF$-SLSC. We know that 
\begin{enumerate}
\item $H_1(X, \FF)=\FF^n$;
\item there are critical values (with possible repetitions) $C=\{c_1 \leq c_2 \leq \ldots \leq c_n\}$;
\item defining $k_r = \max_{c_i<r}i$ the persistence is of form $H_1(\Rips(X, r),\FF)= \oplus_{i=1}^{k_r} \FF\  \oplus_{j=k_r+1}^n \{0\}$ with the bonding maps being surjective on the corresponding summand, either identities or trivial;
\item $H_1(\Rips(X, r),\FF)= \oplus_{i=1}^{n} \II_{(0, c_i]}$.
\end{enumerate}

\begin{Definition}\label{DefMinBase}
A collection of loops $A=\{ a_1, a_2, \ldots a_n\}$ in $X$ with lengths $l_i=\length(a_i)$ satisfying $ l_i\leq l_j, \forall i<j$ is a base for $H_1(X, \FF)$ if elements of $A$ generate $H_1(X, \FF)=\FF^n$. Such base is a
\textbf{lexicographically minimal base} for $H_1(X, \FF)$ if the length vector  $\widetilde L=\{l_1, l_2, \ldots, l_n\}$ forms a lexicographically minimal vector amongst length vectors of all bases. 
\end{Definition}

\begin{Remark}
It follows from Propositions \ref{PropHLength}(2) and \ref{PropDiam}  that the concept of a lexicographically minimal base does not depend on the measure of the size of loops that we use: as in Section \ref{SectFundGroup} we can use either the radius of the smallest enclosing ball, length (as in Definition \ref{DefMinBase}) or the diameter. Consequently we could rephrase Theorem \ref{ThmLexicograph} with these alternative measures as well.
\end{Remark}

\begin{Proposition}
 Suppose $\FF$ is a field and $X$ is compact, geodesic and $\FF$-SLSC. Then there exists a lexicographically minimal base $A=\{ a_1, a_2, \ldots a_n\}$ in $X$ with lengths $l_i=\length(a_i)$ satisfying $ l_i\leq l_j, \forall i<j$. The base is not unique. However, all loops $a_i$ are geodesic circles.
\end{Proposition}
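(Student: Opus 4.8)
The plan is to identify the lexicographically minimal length vector as $(3c_1,\ldots,3c_n)$, to realize it by an explicit base of geodesic circles, and then to show that every base realizing it consists of geodesic circles. Write $d_1<\cdots<d_m$ for the distinct critical values with multiplicities $\mu_t$ (so the list $c_1\le\cdots\le c_n$ is $d_t$ repeated $\mu_t$ times), and set $V_t:=\ker\rho_r^{\FF}$ for $r\in(d_t,d_{t+1}]$, with $V_0:=0$ and $V_m:=H_1(X,\FF)$. By Propositions \ref{PropHPrePers}(2) and \ref{PropHLength}(2), $\ker\rho_r^{\FF}=\S(X,3r/2,\FF)=\L(X,3r,\FF)$, and by the decomposition $\bigoplus_i\II_{(0,c_i]}$ one has $\dim H_1(\Rips(X,r),\FF)=\#\{i:c_i\ge r\}$; together these give $\dim V_t/V_{t-1}=\mu_t$, that $V_t=\L(X,3d_{t+1},\FF)$, and that $V_t=\L(X,3r,\FF)$ is constant on $r\in(d_t,d_{t+1}]$.

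First I would prove the lower bound: every base $\{a_1,\ldots,a_n\}$ of $H_1(X,\FF)$ ordered so that $\length(a_1)\le\cdots\le\length(a_n)$ satisfies $\length(a_i)\ge 3c_i$ for all $i$. Fix $r>0$: if $\length(a_j)<3r$ then $a_j$ is a loop $\length(a_j)$-decomposition of $[a_j]_{\FF}$, so $\crit[a_j]_{\FF}\le\length(a_j)/3<r$, hence $\rho_r^{\FF}([a_j]_{\FF})=0$ by Proposition \ref{PropHLassoDecomp}. Since the $[a_j]_{\FF}$ form a basis of $H_1(X,\FF)$ and $\rho_r^{\FF}$ is onto $H_1(\Rips(X,r),\FF)$, the image is spanned by the $\rho_r^{\FF}([a_j]_{\FF})$ with $\length(a_j)\ge 3r$, whence $\#\{i:c_i\ge r\}\le\#\{j:\length(a_j)\ge 3r\}$, i.e. $\#\{j:\length(a_j)<3r\}\le\#\{i:3c_i<3r\}$; letting $r$ vary forces $\length(a_i)\ge 3c_i$. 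So every base dominates $(3c_1,\ldots,3c_n)$ coordinatewise, hence lexicographically.

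Next I would realize $(3c_1,\ldots,3c_n)$. By Theorem \ref{ThmHPerBasis} (using $\FF$-SLSC) there is a finite family $\{\beta_i\}$ of geodesic circles generating $H_1(X,\FF)$, each of length $l_i$ with $l_i/3$ a critical value; a dimension count using the isomorphism $H_1(\Rips(X,r),\FF)\cong H_1(X,\FF)/\langle\{\beta_i\}_{l_i<r}\rangle$ of Theorem \ref{ThmHPerBasis} shows that $\{[\beta_i]_{\FF}:l_i\le 3d_t\}$ spans $V_t$ for each $t$. Running the matroid greedy algorithm --- process the $\beta_i$ in order of increasing length, keeping each one whose class is not in the span of those already kept --- produces a basis $A=\{a_1,\ldots,a_n\}$ of geodesic circles in which, for each $t$, exactly $\mu_t$ of the chosen circles have length $3d_t$ (before reaching level $3d_t$ the kept circles form a basis of $V_{t-1}$, and level $3d_t$ together with $V_{t-1}$ spans $V_t$); hence its sorted length vector is $(3c_1,\ldots,3c_n)$, which by the previous paragraph is the lexicographic minimum. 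Thus $A$ is a lexicographically minimal base consisting of geodesic circles, proving existence.

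Finally, let $A=\{a_1,\ldots,a_n\}$ be an \emph{arbitrary} lexicographically minimal base ordered by length. By the two previous paragraphs its sorted length vector is $(3c_1,\ldots,3c_n)$, so $\length(a_i)=3c_i$. I would next show $\crit[a_i]_{\FF}=c_i$; the inequality $\le$ holds since $a_i$ itself is a loop $3c_i$-decomposition, and if $\crit[a_i]_{\FF}<c_i=d_t$ then by Proposition \ref{PropHLassoDecomp} there is $r$ with $\crit[a_i]_{\FF}<r<d_t$ and $\rho_r^{\FF}([a_i]_{\FF})=0$. If $t\ge 2$ one may take $r\in(d_{t-1},d_t)$, so $[a_i]_{\FF}\in\ker\rho_r^{\FF}=V_{t-1}$; but the $\dim V_{t-1}$ vectors $[a_j]_{\FF}$ with $\length(a_j)\le 3d_{t-1}$ already lie in, hence span, $V_{t-1}$, so $[a_i]_{\FF}$ is a combination of other basis members --- a contradiction. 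If $t=1$ then $r<d_1$ forces $[a_i]_{\FF}=0$ (there $\rho_r^{\FF}$ is an isomorphism), again contradicting that $A$ is a basis. With $\crit[a_i]_{\FF}=c_i$ established, reparametrize $a_i$ by arc length on $[0,3c_i]$; if $a_i$ were not a geodesic circle there would be parameters $s<u$ with $d_X(a_i(s),a_i(u))<\ell_1$, where $\ell_1\le\ell_2$ with $\ell_1+\ell_2=3c_i$ are the lengths of the two arcs between these points (self-intersections being the case $d_X=0$). Joining $a_i(s)$ to $a_i(u)$ by a geodesic $\delta$ of length $d<\ell_1$, the loops $A'$ (the $\ell_1$-arc followed by $\delta^-$) and $B'$ ($\delta$ followed by the $\ell_2$-arc) satisfy $[a_i]_{\FF}=[A']_{\FF}+[B']_{\FF}$ with $\length(A')=\ell_1+d<2\ell_1\le 3c_i$ and $\length(B')=d+\ell_2<3c_i$, so $\crit[a_i]_{\FF}<c_i$, contradicting the previous step; hence each $a_i$ is a geodesic circle. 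Non-uniqueness is then immediate (e.g. reverse the orientation of some $a_i$; for $\FF\ne\ZZ/2$ one may even replace a basis vector by its negative, and for $\ZZ/2$-coefficients a flat hexagonal torus carries three shortest geodesic circles, any two of which form a base). The step I expect to fight hardest is deducing $\crit[a_i]_{\FF}=c_i$ for a general minimal base: unlike the greedily built one, such a base need not be adapted to the filtration $V_0\subseteq V_1\subseteq\cdots$, forcing the linear-independence argument inside $V_{t-1}$ above; thereafter the cut-along-a-chord argument is routine, the only delicate point being that both pieces come out strictly shorter than $a_i$, which is exactly where $d_X(a_i(s),a_i(u))<\ell_1$ (the chord beating the shorter arc) is used.
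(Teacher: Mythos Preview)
Your proof is correct and follows essentially the same strategy as the paper's (much terser) argument: existence via the geodesic circles supplied by Theorem \ref{ThmHPerBasis}, non-uniqueness by example, and the observation that a non-geodesic loop $a_i$ can be split along a chord into two strictly shorter loops whose classes sum to $[a_i]_{\FF}$.

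Where you differ is in packaging. The paper compresses your lower-bound and greedy-realization paragraphs into a single appeal to Theorem \ref{ThmHPerBasis} (``if this base was not minimal then it would contradict the connection to persistence''), and for the final step it cuts at \emph{antipodal} points (arc lengths $l_i/2$ each) and argues directly that replacing $a_i$ by one of the two resulting loops yields a lexicographically smaller base, contradicting minimality outright. Your route instead passes through the intermediate fact $\crit[a_i]_{\FF}=c_i$, which you then contradict via the chord decomposition. This extra step is a genuine refinement: it makes explicit the identification of the minimal length vector with $(3c_1,\ldots,3c_n)$ and ties each basis element to its critical value, which is exactly what is needed next in Theorem \ref{ThmLexicograph}. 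The cost is the linear-independence argument inside $V_{t-1}$ that you flagged as the hardest step; the paper's version sidesteps it by staying at the level of the base rather than descending to individual critical values.
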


\begin{proof}
We can choose the base essentially in the same way as (linearly $\FF$-independent) loops $\beta_i$ in Theorem \ref{ThmHPerBasis}. If this base was not minimal then it would contradict the connection to persistence stated in Theorem \ref{ThmHPerBasis}. The base is not unique as the usual cylinder demonstrates. If any of the loops $a_i$ was not a geodesic circle we could decompose it by connecting two diametrically opposed points (with respect to circumference of $a_i$) at distance less than $l_i/2$ and thus obtain a lexicographically smaller base.
\end{proof}

\begin{Theorem}
 \label{ThmLexicograph}
 [Persistence-Minimal Base correspondence]
  Suppose $\FF$ is a field and $X$ is compact, geodesic and $\FF$-SLSC. Let $A=\{ a_1, a_2, \ldots a_n\}$ be a collection of loops in $X$  with lengths $l_i=\length(a_i)$ satisfying $ l_i\leq l_j, \forall i<j$  and $H_1(X, \FF)=\FF^n$. 
  
  Then $A$ is a lexicographically minimal base if and only if
    $$
  \big\{H_1(\Rips(X, r),\FF)\big\} _{r>0} = \big\{ H_1(X, \FF)/\{[a_1]_\FF, [a_2]_\FF\ldots, [a_{k_r}]_\FF \} \big\} _{r>0}\cong \bigoplus_{i=1}^n (\II_{(0, l_i/3]})_r,  
  $$
  where $k_r = \max_{(l_i/3)<r}i$. In such case  the corresponding critical values of persistence are $\{l_i/3\}_{i\in \{1, 2, \ldots, n\}}$. 
\end{Theorem}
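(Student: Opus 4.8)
The plan is to reduce both implications to one numerical statement: the sorted length vector $(l_1,\dots,l_n)$ of $A$ equals three times the sorted vector $(c_1,\dots,c_n)$ of critical values, equivalently $\dim_\FF\L(X,s,\FF)=\#\{i:l_i<s\}$ for all $s>0$. I would first record the facts available under the hypotheses: by Proposition~\ref{PropHLength}(1) every class of $H_1(X,\FF)$ has a finite-length representative; by Theorem~\ref{PropHFin} there are finitely many critical values $c_1\le\dots\le c_n$, the persistence module is q-tame, and (preamble identity (4)) $\dim_\FF H_1(\Rips(X,r),\FF)=\#\{i:c_i\ge r\}$; and by Theorem~\ref{ThmPerCirc} there is a natural isomorphism $\{H_1(\Rips(X,r),\FF)\}_{r>0}\cong\{H_1(X,\FF)/\L(X,3r,\FF)\}_{r>0}$ with bonding maps the obvious quotients. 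Comparing dimensions already gives $\dim_\FF\L(X,s,\FF)=\#\{i:3c_i<s\}$, so $\L(X,\cdot,\FF)$ is a step filtration of $\FF^n$ whose jump multiset is exactly $\{3c_i\}$; it remains to relate this to $A$.

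For the forward direction, assume $A$ is a lexicographically minimal base. The heart is a matroid/greedy exchange showing $l_i=3c_i$ for all $i$. The inequality $l_i\ge 3c_i$ uses only that $A$ is a base: $[a_1]_\FF,\dots,[a_i]_\FF$ are $i$ independent cycles of length $\le l_i$, so $\dim_\FF\L(X,l_i+\eps,\FF)\ge i$ for all $\eps>0$, whence $3c_i\le l_i$. For the reverse, let $k$ be least with $l_k>3c_k$; then $\dim_\FF\L(X,l_k,\FF)=\#\{i:3c_i<l_k\}\ge k>k-1=\dim_\FF\langle[a_1]_\FF,\dots,[a_{k-1}]_\FF\rangle$, so there is a finite-length cycle $c$ with $\length(c)<l_k$ and $[c]_\FF\notin\langle[a_1]_\FF,\dots,[a_{k-1}]_\FF\rangle$; a linear-independence exchange replaces some $[a_j]_\FF$, $j\ge k$, by $[c]_\FF$, giving a base whose sorted length vector is lexicographically strictly below that of $A$ --- contradiction. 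Given $l_i=3c_i$, we get $\langle[a_1]_\FF,\dots,[a_{k_r}]_\FF\rangle\subseteq\L(X,3r,\FF)$ (each $l_i<3r$ for $i\le k_r$) with both sides of dimension $k_r$, hence equal; substituting into Theorem~\ref{ThmPerCirc} yields the displayed equality, and since $\dim_\FF H_1(\Rips(X,r),\FF)=\#\{i:l_i/3\ge r\}$ with surjective bonding maps, the module is $\bigoplus_{i=1}^n(\II_{(0,l_i/3]})_r$ (closed at $l_i/3$ since $[a_i]_\FF\notin\L(X,l_i,\FF)$), and the critical values are $\{l_i/3\}$.

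For the converse, assume the displayed formula holds. Evaluating it at $r>2\diam(X)/3$, where $H_1(\Rips(X,r),\FF)=0$ by the last Proposition of Section~\ref{SectStruc}, forces $\langle[a_1]_\FF,\dots,[a_n]_\FF\rangle=\FF^n$, so $A$ is a base. Applying the forward direction to an actual lexicographically minimal base $A'=\{a_1',\dots,a_n'\}$ of geodesic circles (which exists by the Proposition preceding the theorem) gives $\{H_1(\Rips(X,r),\FF)\}_{r>0}\cong\bigoplus_{i=1}^n(\II_{(0,l_i'/3]})_r$ with $l_i'=\length(a_i')$. Uniqueness of the interval decomposition of the q-tame module $\{H_1(\Rips(X,r),\FF)\}_{r>0}$ then forces the multisets $\{l_i\}$ and $\{l_i'\}$ to coincide, hence $l_i=l_i'$ for all $i$ (both are increasing); a base realizing the lexicographically minimal length vector is itself lexicographically minimal, so $A$ is such, and its critical values are $\{l_i/3\}$.

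The main obstacle is the matroid/greedy step of the forward direction: identifying the sorted length vector of a lexicographically minimal base with three times the vector of critical values. This is the only place lexicographic minimality is genuinely used, and it is where the exchange argument must be combined with the step-filtration structure of $\L(X,\cdot,\FF)$ drawn from Theorem~\ref{PropHFin}; the remainder is bookkeeping with the correspondence theorems and the standard uniqueness of persistence-module decompositions.
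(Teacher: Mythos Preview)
The paper does not give an explicit proof of this theorem; it is stated immediately after the Proposition asserting existence of a lexicographically minimal base and is treated as an immediate consequence of Theorem~\ref{ThmHPerBasis}, the preamble identities~(1)--(4) of Subsection~\ref{SubsectMin}, and Theorem~\ref{ThmPerCirc}. Your argument is correct and supplies the details the paper omits.

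Your approach and the paper's implicit one are essentially the same in spirit but differ in where the work is done. The paper's route is: Theorem~\ref{ThmHPerBasis} already produces geodesic circles $\beta_i$ of lengths $3c_i$ for which the persistence formula holds; the preceding Proposition then claims these $\beta_i$ form a lexicographically minimal base, with the one-line justification that ``if this base was not minimal then it would contradict the connection to persistence stated in Theorem~\ref{ThmHPerBasis}.'' That contradiction is precisely what your matroid/greedy exchange makes explicit: if some base $A$ had a strictly smaller sorted length vector, then at the first index $k$ of discrepancy one would find $\dim_\FF\L(X,l_k,\FF)\ge k$ while only $k-1$ of the $\beta_i$ lie there, and the exchange would improve the $\beta_i$-base. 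So your forward direction both proves the theorem and retroactively completes the proof of the preceding Proposition. The converse via uniqueness of the interval decomposition is a clean touch that the paper does not spell out at all. One minor remark: your phrase ``closed at $l_i/3$ since $[a_i]_\FF\notin\L(X,l_i,\FF)$'' is better justified globally (as you in fact do) by the equality $\langle[a_1]_\FF,\dots,[a_{k_r}]_\FF\rangle=\L(X,3r,\FF)$, which immediately gives the quotient as $\bigoplus_{i>k_r}\FF[a_i]_\FF$ and hence the interval form; the pointwise claim about a single $[a_i]_\FF$ is not needed.
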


Thus the lexicographically minimal base encodes the persistence diagram of $X$ along with the corresponding critical triangles: the lengths of generators encode death times and generators  correspond to critical triangles (see Figure \ref{FigExample}).

\subsection{Localization theorem}

\begin{Definition}
 \label{DefHCritTri}
 Suppose $X$ is a compact geodesic space, $G$ is an Abelian group and $r>0$. A triangle $\sigma\in \cRips(X, r)$ with vertices  $x_1, x_2, x_3$ is $G$-\textbf{critical} if the kernel of the inclusion induced map $H_1(\Rips(X, r), G)\to H_1(\Rips(X, r)\cup \sigma, G)$ is nontrivial, where adding $\sigma$ to the Rips complex implies we attach it along some $(r/2)$-sample of some filling $\alpha_\sigma$ of the $2r$-loop $x_1, x_2, x_3, x_1$.
 \end{Definition}

\begin{Theorem}\label{ThmHIntrLoc}
 [Intrinsic Localization theorem for homology]
 Suppose $X$ is a compact geodesic space and $r>0$. If $\sigma\in \cRips(X, r)$ with vertices  $x_1, x_2, x_3$ is a $G$-critical triangle then:
\begin{enumerate}
 \item points $x_1, x_2, x_3$ are at pairwise distance $r$;
 \item no $\alpha_\sigma$ is contained in $\L(X, 3r, G)$, where $\alpha_\sigma$ is a filling of $x_1, x_2, x_3, x_1$ and has length $3r$.
\end{enumerate}
Furthermore, the kernel of $H_1(\Rips(X, 3r), G) \to H_1(\Rips(X, 3r'), G)$, where $r<r'$ are consecutive critical values, is generated by all $\alpha'$, where $\alpha'$ ranges through  fillings of vertices of each of the critical triangles in $\cRips(X, 3r)$. 

Conversely, if $\alpha$ is a loop in $X$ of length $3r$ not contained in $\L(X, 3r, G)$, then $\alpha$ is a geodesic circle, $r$ is a $G$-critical value and any three equidistant points on $\alpha$ form vertices of a $G$-critical triangle $\sigma\in \cRips(X, 3r)$.  
\end{Theorem}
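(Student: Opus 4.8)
The plan is to transcribe the proof of Theorem~\ref{ThmIntrLoc} into the homological setting, where the lasso bookkeeping of the $\pi_1$ argument disappears. The substantive inputs are Proposition~\ref{PropNull}(2) (which rephrases $r$-boundariness of an attaching cycle as membership in $\L(X,3r,G)$), Proposition~\ref{PropRips}(5) (short loops are $r$-null), Proposition~\ref{PropDif} (two fillings of an $r$-loop differ homologically by small loops), and the structural results Theorems~\ref{ThmHPerBasis} and~\ref{PropHFin} together with the homological half of Theorem~\ref{ThmPerCirc}.

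For~(1), argue by contradiction: as $\sigma\in\cRips(X,r)$ the three pairwise distances are $\le r$, and if one were $<r$, then every filling $\alpha_\sigma$ of the $2r$-loop $x_1,x_2,x_3,x_1$ would satisfy $\length(\alpha_\sigma)=d(x_1,x_2)+d(x_2,x_3)+d(x_3,x_1)<3r$, so by Proposition~\ref{PropRips}(5) any $r$-sample of $\alpha_\sigma$ — in particular the $(r/2)$-sample along which $\sigma$ is attached — is $r$-null in $\Rips(X,r)$; attaching a $2$-cell along a cycle that already bounds leaves $H_1(\_,G)$ unchanged, so the map of Definition~\ref{DefHCritTri} is injective for every choice of filling, contradicting $G$-criticality. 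Hence the three distances equal $r$ and $\length(\alpha_\sigma)=3r$. For~(2), note that $\sigma$ is $G$-critical iff the attaching $(r/2)$-sample $L'$ of the chosen filling $\alpha_\sigma$ represents a nonzero class in $H_1(\Rips(X,r),G)$; since $L'$ is a fine $r$-sample of $\alpha_\sigma$ and, by the computation in Proposition~\ref{PropDif}, any filling of $L'$ is homologous to $\alpha_\sigma$ modulo $\L(X,r,G)\subseteq\L(X,3r,G)$, Proposition~\ref{PropNull}(2) gives $[L']_G=0$ iff $[\alpha_\sigma]_G\in\L(X,3r,G)$, so criticality forces $[\alpha_\sigma]_G\notin\L(X,3r,G)$. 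Moreover, by~(1) the three edges are geodesics of length $r$, so two fillings of $x_1,x_2,x_3,x_1$ differ homologically by a concatenation of loops of length $2r<3r$, i.e. by an element of $\L(X,2r,G)\subseteq\L(X,3r,G)$; thus avoiding $\L(X,3r,G)$ is independent of the chosen filling, which is the ``no $\alpha_\sigma$'' assertion.

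The ``Furthermore'' clause follows from Theorem~\ref{ThmHPerBasis} (equivalently Theorem~\ref{PropHFin}(e)): the kernel of the relevant bonding map is generated by classes of geodesic circles of the pertinent length, and by~(1)--(2) these are precisely the fillings of the vertex triples of the $G$-critical triangles at that scale, the reverse inclusion being the equivalence established in~(2). For the converse, let $\alpha$ have length $3r$ with $[\alpha]_G\notin\L(X,3r,G)$; by the homological part of Theorem~\ref{ThmPerCirc} the image of $[\alpha]_G$ in $H_1(\Rips(X,s),G)$ vanishes for $s>r$ (as $\alpha$ has a representative of length $3r<3s$) and is nonzero for $s\le r$ (as $\L(X,3s,G)\subseteq\L(X,3r,G)$), so $r$ is a $G$-critical value. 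If $\alpha$ were not a geodesic circle there would be points $p,q$ on $\alpha$ with $d(p,q)$ strictly less than the length $a\le 3r/2$ of the shorter $\alpha$-arc between them; joining $p,q$ by a geodesic of length $d(p,q)$ splits $[\alpha]_G$ into a sum of two loop classes of lengths $<2a\le 3r$ and $<a+b=3r$, both lying in $\L(X,3r,G)$, a contradiction. Hence $\alpha$ is a geodesic circle; three equidistant points on it lie at pairwise distance $r$ and span a triangle $\sigma\in\cRips(X,r)\subseteq\cRips(X,3r)$, and taking $\alpha$ itself (whose arcs are geodesics of length $r$) as a filling, the equivalence in~(2) shows $\sigma$ is $G$-critical.

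The only delicate point I anticipate is the bookkeeping around the attaching cycle in Definition~\ref{DefHCritTri}: one must carry out the comparison of $L'$ inside $\Rips(X,r)$ — where, being an $(r/2)$-loop, it is in particular an $r$-loop and an $r$-sample of $\alpha_\sigma$ — and verify that every filling and sample in sight differs from the others only by elements of $\L(X,3r,G)$ rather than of a larger group, so that Proposition~\ref{PropNull}(2) applies verbatim. Everything else is a routine, basepoint-free simplification of the proof of Theorem~\ref{ThmIntrLoc}.
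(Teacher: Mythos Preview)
Your proposal is correct and follows essentially the same approach as the paper, which simply states that the proof is identical to that of Theorem~\ref{ThmIntrLoc}; you have faithfully unpacked that argument in the homological setting, invoking the same ingredients (Proposition~\ref{PropRips}(5), Proposition~\ref{PropNull}(2), Theorem~\ref{ThmHPerBasis}, Theorem~\ref{ThmPerCirc}) with the expected simplifications from dropping basepoints. Your added detail on the filling-independence in~(2) and the explicit geodesic-circle decomposition in the converse are welcome elaborations of steps the paper leaves implicit.
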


\begin{proof}
The proof is essentially the same as that of Theorem \ref{ThmIntrLoc}.
\end{proof}

\section{An example}
\label{SectExample}
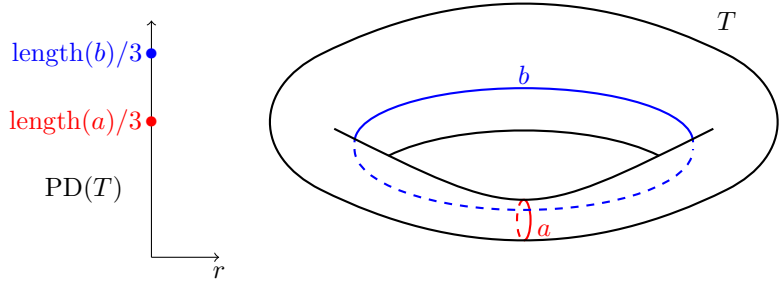
\begin{figure}
\begin{tikzpicture}[scale=.9]
\draw [red, thick](0,-.75) arc (-90:90:.1 and .3);
\draw [red, thick, dashed](0,-.15) arc (90:270:.1 and .3);
\draw [red](0.3, -.6) node {$a$};
\draw [blue, thick](2.5,.7) arc (0:180:2.5 and .8);
\draw [blue, thick, dashed](-2.5,.6) arc (180:360:2.5 and .9);
\draw [blue](0, 1.7) node {$b$};
\draw [thick] (-3,0) ..controls (-1,-1)and(1,-1)..
(3,0)..controls(4,.5)and(4, 1.5)..
(3,2)..controls(1,3)and(-1, 3)..
(-3,2)..controls(-4,1.5)and(-4, .5)..
cycle;
\draw [thick](-2.8,.9) ..controls (0,-.5)..(2.8,.9);
\draw  [thick](-2,.5) ..controls (-1,1)and(1,1)..(2,.5);
\draw[->] (-5.5, -1) to (-4.5, -1) node[below]{$r$};
\draw[->] (-5.5, -1) to (-5.5, 2.5) node[above]{};
\draw[red] (-5.5, 1) node {$\bullet$} node[left]{$\length(a)/3$};
\draw[blue] (-5.5, 2) node {$\bullet$}node[left]{$\length(b)/3$};
\draw (-6.5, 0) node{PD($T$)};
\draw (3, 2.5) node{$T$};

\end{tikzpicture}
\caption{A slightly deformed standard torus $T$ with shortest homology generators $a$ and $b$ on the right and its persistence diagram via open Rips complexes for any field on the left.}
\label{FigExample}
\end{figure}

Consider a two-dimensional torus which is slightly thinned at some region so that we have unique shortest geodesic circles $a$ and $b$ generating its homology as depicted on Figure \ref{FigExample}. Its persistence diagram is provided by the same figure. Now assume we remove an open disc $D$ far from $a$ and $b$, whose boundary  $c$ is of length much smaller than $a$. Denote the obtained geodesic space by $T'$. Note that the removal of  $D$ changes metric at some region a little bit, and makes $c$ a geodesic circle. However $a$ and $b$ remain unique shortest geodesic circles  generating the homology.

Then 
$$
\pi_1(\Rips(T',r),\bullet) = 
\begin{cases}
\ZZ * \ZZ, & r\leq  \length(c)/3\\
\ZZ \oplus \ZZ, & \length(c)/3 < r\leq  \length(a)/3\\
\ZZ, & \length(a)/3 <  r\leq  \length(b)/3\\
0, & \length(b)/3 <  r
\end{cases}
$$
 and 
$$
H_1(\Rips(T',r),\ZZ) = 
\begin{cases}
\ZZ \oplus \ZZ, & r\leq  \length(a)/3\\
\ZZ, & \length(a)/3 <  r\leq  \length(b)/3\\
0, & \length(b)/3 <  r
\end{cases}
$$
Note that the critical values, which correspond to geodesic circles by the results of this paper, do not coincide in the provided cases. However, geodesic circle $c$ is still detectable by higher-dimensional homology (see the last section of Future Work) even though it is nullhomologous.

\section{\v Cech complexes}
\label{SectCech}

In this section we prove that  Rips and \v Cech complexes of a geodesic space essentially (that is up to a multiplication of parameter $r$ by $3/4$) induce the same persistences. A connection between \v Cech complexes and the corresponding Spanier groups $\S$ was first proved in \cite{BF} for paracompact spaces using the partitions of unity as the connecting map. For our purposes however we need a connecting map in terms of samples/fillings (in the spirit of discretization) and we need the commutativity with the inclusion induced maps on the algebraic level. For this purpose we prove the required connection using (in our case more convenient and geometric) $\L$ groups.

We will make use of the notation introduced in Definition \ref{DefRLoop} (1)-(6) in the \v Cech setting as well, as $1$-skeleta of $\Rips(X,r)$ and $\Cech(X,r/2)$ coincide.

\begin{Def}\label{CechLoops}
 An $r$-loop is \v Cech $r$-\textbf{null} if it is contractible in $\Cech(X,r/2)$. Two $r$-loops are \v Cech $r$-\textbf{homotopic}, if they are homotopic in $\Cech(X,r/2)$. The corresponding simplicial homotopy in $\Cech(X,r/2)$ is referred to as \v Cech $r$-\textbf{homotopy}. If the second $r$-loop is constant we also call it \v Cech $r$-\textbf{nullhomotopy}. 
\end{Def}

The following proposition provides the required properties from Proposition \ref{PropRips} in the \v Cech setting.

\begin{Prop}
\label{PropCech}
 Suppose $X$ is geodesic, $\alpha \colon [0,a]\to X$ is a loop in $X$ and fix $0<r$. Then the following hold:
 \begin{enumerate}
	\item  any two $r$-samples of $\alpha$ are \v Cech $r$-homotopic;
	\item any $r$-sample of a loop $\alpha$ of length less than $2r$ is \v Cech $r$-null;
	\item if loops $\alpha$ and $\alpha' \colon [0,a']\to X$ are homotopic, then any two of their $r$-samples $L$ and $L'$ are \v Cech $r$-homotopic (the statement holds for both based and unbased versions);
\end{enumerate}
\end{Prop}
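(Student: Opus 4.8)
I would prove the three parts in the order $(1)\Rightarrow(2)$, $(1)\Rightarrow(3)$, so the real work is in $(1)$. The overall plan is to transcribe the proofs of Proposition \ref{PropRips}(4),(5),(6), the obstruction being that three points at pairwise distance $<r$ need not span a $2$-simplex of $\Cech(X,r/2)$, so the ``obvious triangle'' $[\alpha(s_i),\alpha(p),\alpha(s_{i+1})]$ used in Proposition \ref{PropRips}(4) is unavailable. The one structural feature we gain in the \v Cech setting and will use repeatedly is: a geodesic segment $g$ of length $<r$ with midpoint $m$ has $d(m,y)<r/2$ for every $y\in g$, so any finite subset of $g$ spans a simplex of $\Cech(X,r/2)$ witnessed by $m$.

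For $(1)$, given two $r$-samples $L,L'$ of $\alpha$, I would argue in two stages. Stage one reduces an arbitrary $r$-sample $L=(\alpha(s_0),\dots,\alpha(s_k))$ to a refinement of it that is an $r/2$-sample. Fix an arc $[s_i,s_{i+1}]$, put $D_i=\sup_{t\in[s_i,s_{i+1}]}d(\alpha(s_i),\alpha(t))\le\diam\alpha([s_i,s_{i+1}])<r$, and — using uniform continuity of $\alpha$ on $[0,a]$ — choose a subdivision $s_i=u_0<u_1<\dots<u_N=s_{i+1}$ so fine that $d(\alpha(u_j),\alpha(u_{j+1}))<(r-D_i)/2$ and $\diam\alpha([u_j,u_{j+1}])<r/2$ for all $j$. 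For $1\le j\le N-1$ the midpoint $m_j$ of a geodesic from $\alpha(u_0)$ to $\alpha(u_j)$ (of length $\le D_i<r$) satisfies $d(m_j,\alpha(u_0)),\,d(m_j,\alpha(u_j))\le D_i/2<r/2$ and $d(m_j,\alpha(u_{j+1}))\le d(m_j,\alpha(u_j))+d(\alpha(u_j),\alpha(u_{j+1}))<D_i/2+(r-D_i)/2=r/2$; hence the fan $\{[\alpha(u_0),\alpha(u_j),\alpha(u_{j+1})]\}_{1\le j\le N-1}$ is a simplicial disc in $\Cech(X,r/2)$ whose boundary is the edge $\{\alpha(s_i),\alpha(s_{i+1})\}$ together with the path $\alpha(u_0)\to\dots\to\alpha(u_N)$, so these are \v Cech $r$-homotopic rel endpoints. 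Performing this on every arc turns $L$ into a refining $r/2$-sample $L^{\sharp}$. Stage two is the \v Cech analogue of Proposition \ref{PropRips}(4) restricted to $r/2$-samples: any two $r/2$-samples are \v Cech $r$-homotopic, since passing to the common refinement and inserting one point $p$ at a time into an $r/2$-arc $[u_i,u_{i+1}]$ uses the triangle $[\alpha(u_i),\alpha(p),\alpha(u_{i+1})]$, which now \emph{is} a simplex of $\Cech(X,r/2)$ — witnessed by $\alpha(u_i)$, as all three points lie in a set of diameter $<r/2$. Chaining $L\to L^{\sharp}\to L^{\sharp}\vee {L'}^{\sharp}\to {L'}^{\sharp}\to L'$ yields $(1)$.

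Granting $(1)$, parts $(2)$ and $(3)$ are short. For $(2)$, parametrize $\alpha$ (of length $\ell<2r$) by arc length; then $L_0=(\alpha(0),\alpha(\ell/2))$ is a genuine $r$-sample, since each of the two half-arcs has diameter $\le\ell/2<r$, and as a simplicial loop of size $2$ it is null-homotopic (it traverses an edge and its reverse). By $(1)$ every $r$-sample of $\alpha$ is \v Cech $r$-homotopic to $L_0$, hence \v Cech $r$-null. (This is exactly why the bound $3r$ of Proposition \ref{PropRips}(5) becomes $2r$: in $\Cech(X,r/2)$ a size-$2$ loop is freely null, but a size-$3$ one need not be.) For $(3)$, given $H\colon S^1\times I\to X$ with $\alpha=H|_{S^1\times\{0\}}$ and $\alpha'=H|_{S^1\times\{1\}}$ (reparametrizing the loops to have domain $S^1$), take a triangulation of $S^1\times I$ subordinate to the open cover $\{H^{-1}(B(x,r/4))\}_{x\in X}$, so that the image of each $2$-simplex has diameter $<r/2$; then $[v_0,v_1,v_2]\mapsto[H(v_0),H(v_1),H(v_2)]$ is a simplicial map to $\Cech(X,r/2)$ (any $H(v_i)$ witnesses the image simplex), and its restrictions to $S^1\times\{0\}$ and $S^1\times\{1\}$ are $r$-samples of $\alpha$ and $\alpha'$ that are therefore \v Cech $r$-homotopic. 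Combining with $(1)$ handles arbitrary $r$-samples, and taking the triangulation to restrict to a subcomplex over $\{\bullet\}\times I$ gives the based version.

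The main obstacle is Stage one of $(1)$: the tension between the ``on-curve'' nature of an $r$-sample and the rigid common-witness condition of the \v Cech complex. The fix — fanning from $\alpha(s_i)$ over a sufficiently fine on-curve subdivision of each arc, using midpoints of geodesics back to $\alpha(s_i)$ as witnesses, together with the a priori bound $D_i<r$ that keeps all these midpoints within $r/2$ of the relevant vertices — is the one genuinely new ingredient compared to the Rips case; the remaining arguments are routine transcriptions of Section \ref{SectRipsCxes}.
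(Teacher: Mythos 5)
Your proposal is correct, and for part (1) it takes a genuinely different route from the paper. The paper proves (1) by the single-insertion scheme of Proposition \ref{PropRips}(4) directly, asserting that the inserted triangle $[x_i,x_j,p]$ lies in $\Cech(X,r/2)$ because it is witnessed by the midpoint of a geodesic from $x_i$ to $x_j$; you instead first pass from an arbitrary $r$-sample to a refining $r/2$-sample via the fan of midpoint-witnessed triangles, and only then perform single insertions, where the left endpoint of an arc of diameter less than $r/2$ is itself a witness. Your extra stage is not redundant caution: the paper's witness claim controls $d(m,x_i)$ and $d(m,x_j)$ but not $d(m,p)$, and it can genuinely fail --- for a circle of circumference $C$ with $3r/2<C<2r$, the two-point sample $(\alpha(0),\alpha(2C/3))$ is a legitimate $r$-sample (the long arc has diameter $C/2<r$), yet inserting $p=\alpha(C/3)$ produces three equidistant points whose smallest enclosing ball has radius $C/3>r/2$, so they span no $2$-simplex of $\Cech(X,r/2)$ at all. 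Your two-stage refinement sidesteps exactly this configuration, since after Stage one every insertion happens inside an arc of diameter less than $r/2$; the price is the uniform-continuity bookkeeping with the quantity $(r-D_i)/2$, which you handle correctly. For (2) the paper uses four equidistant points and two triangles witnessed by $x_1$ and $x_3$, while you reduce to a size-two loop, which is degenerate and hence freely null; both are fine, and your reduction makes the contrast with the $3r$ bound of Proposition \ref{PropRips}(5) transparent. Part (3) is essentially the paper's argument (cover by balls of radius $r/4$ versus $r/2$ is immaterial, since either way each image simplex acquires a witness).
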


\begin{proof}
 The proof follows the proof of Proposition \ref{PropRips} (4)-(6) closely. 

 (1) As in the proof of Proposition \ref{PropRips} (4) it suffices to show that adding a point $p$ between consecutive points $x_i$ and $x_j$ of an $r$-sample $L$ results in an $r$-loop, which is \v Cech $r$-homotopic to $L$.  The \v Cech $r$-homotopy is induced by a $2$-simplex $[x_i, x_j, p] \in \Cech(X,r/2)$, whose existence is witnessed by the midpoint between $x_i$ and $x_j$.
 
 (2) By (1) it suffices to prove the claim for any $r$-sample. Assume $\alpha \colon [0,a]\to X$ is a parametrization with the natural parameter. Choose an $r$-sample of the form $x_0, x_1, x_2, x_3, x_4=x_0$  with $x_i=\alpha(i \cdot a/4)$.
Then $L$ is \v Cech $r$-nullhomotopic via $2$-simplices $[x_0,x_1,x_2]$ and $[x_0,x_3,x_2]$ witnessed by points $x_1$ and $x_3$ in $\Cech(X,r/2)$ respectively. See the right side of Figure \ref{FigAA} for the corresponding configuration.
 
 (3) The proof of Proposition \ref{PropRips} (6) suffices as each  $2$-simplex $[a,b,c]$ used in that $r$-nullhomotopy is contained in some open ball of radius $r/2$, hence the center of that ball witnesses the existence of $[a,b,c]$ in $\Cech(X,r/2)$. 
\end{proof}

The following is a variant of Proposition \ref{PropNull} for the \v Cech complexes.

\begin{Proposition}\label{PropNullCech}
 Suppose $X$ is a geodesic space, $r>0$, $L$ is an $r$-loop and $\alpha$ is a filling of $L$. 
\begin{enumerate}
 \item If $L$ is based then the following holds:  $L$ is \v Cech $r$-null if and only if $[\alpha]_{\pi_1} \in \L(X, 2r, \pi_1)$.
 \item For any Abelian group $G$ the following holds:  $0=[L]_G\in \Cech(X,r/2)$ if and only if  $[\alpha]_G\in \L(X,2r,G)$.
\end{enumerate}
  \end{Proposition}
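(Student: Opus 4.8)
The plan is to mimic the proof of Proposition \ref{PropNull} closely, exploiting that a $2$-simplex $[a,b,c]$ lies in $\Cech(X,r/2)$ precisely when the three balls $B(a,r/2),B(b,r/2),B(c,r/2)$ have a common point $w$, which in particular forces $a,b,c\in B(w,r/2)$, so the filling of the $3$-loop $a,b,c$ has length less than $3\cdot r = 3r$... but we want the sharper bound $2r$, so a little more care is needed. The key geometric observation is: if $w$ witnesses $[a,b,c]$, then the three geodesic segments from $w$ to $a$, $b$, $c$ each have length less than $r/2$, so replacing each edge of the filled triangle by the ``detour through $w$'' gives a decomposition of the filling of $a,b,c$ into three loops, each of length less than $2\cdot(r/2)+(r/2)+(r/2)=2r$; in fact each such loop $[\text{edge}]*[\text{two spokes to }w]$ has length bounded by the sum of a single edge (length $<r$, since both endpoints lie in $B(w,r/2)$) plus two spokes of length $<r/2$ each, giving $<2r$. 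This is the mechanism that upgrades the Rips bound $3r$ to the \v Cech bound $2r$.

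For the forward direction, suppose $L$ (based, given by $\bullet=x_0,x_1,\dots,x_k$) is \v Cech $r$-null. As in Proposition \ref{PropNull}, a nullhomotopy is encoded by a triangulation $\Delta$ of a disc $D$ whose boundary is $L$ and each of whose triangles $[a,b,c]$ is a simplex of $\Cech(X,r/2)$, hence is witnessed by some point $w_{[a,b,c]}\in X$. I define the map $\f$ on the $1$-skeleton of $(D,\Delta)$ by sending vertices to the corresponding points of $X$ and each edge to a chosen geodesic (with the edges along $\partial D$ chosen so the induced filling is $\alpha$). Then $\alpha$ decomposes, exactly as in the Rips case via suitable basepoint-connecting paths $\gamma_T$, as a concatenation of lassos $\gamma_T*\beta_T*\gamma_T^-$ over triangles $T$ of $\Delta$, where $\beta_T$ is the $\f$-filling of the boundary of $T$. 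By the geometric observation above, each $\beta_T$ is itself a concatenation of three loops of length $<2r$ (the ``spoke decomposition'' through the witness $w_T$), hence $[\beta_T]_{\pi_1}\in\L(X,2r,\pi_1)$; since this group is normal (Proposition \ref{PropIneq}) the conjugated lassos also lie in it, and therefore $[\alpha]_{\pi_1}\in\L(X,2r,\pi_1)$. In the homological case the paths $\gamma_T$ are unnecessary and $[\alpha]_G=\sum_{T}[\beta_T]_G$.

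For the converse, suppose $[\alpha]_{\pi_1}\in\L(X,2r,\pi_1)$. By Proposition \ref{PropRips}(6) (equivalently Proposition \ref{PropCech}(3)) we may replace $\alpha$ by a concatenation of $2r$-lassos, and since \v Cech $r$-nullity respects concatenation it suffices to show that an $r$-sample of a single $2r$-lasso $\gamma*\beta*\gamma^-$ is \v Cech $r$-null; an $r$-sample of such a lasso is \v Cech $r$-homotopic to an $r$-sample of $\beta$ together with a back-and-forth along $\gamma$ (which contributes nothing), so it suffices to show any $r$-sample of a loop $\beta$ of length $<2r$ is \v Cech $r$-null --- and that is exactly Proposition \ref{PropCech}(2). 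This closes the argument. The main obstacle is purely bookkeeping: verifying that the ``spoke decomposition'' through the witness genuinely produces loops of length strictly less than $2r$ (one must be careful that an edge $[a,b]$ of a \v Cech triangle has length $<r$, not just $\le r$, which follows from $a,b\in B(w,r/2)$ being open balls), and threading the basepoint paths $\gamma_T$ through this finer decomposition --- but both are direct adaptations of the Rips proof and of Proposition \ref{PropDif}.
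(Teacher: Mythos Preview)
Your proof is correct and follows essentially the same route as the paper. The only cosmetic difference is the order of operations in the forward direction: the paper first passes to the barycentric-style subdivision $\Delta'$ obtained by inserting each witness $w_T$ as a new vertex (replacing each $T=[x,y,z]$ by the three triangles $[x,y,w_T],[y,z,w_T],[z,x,w_T]$, each of perimeter $<2r$) and then runs the Proposition~\ref{PropNull} lasso decomposition once over $\Delta'$; you instead run the lasso decomposition over the original $\Delta$ and then apply the ``spoke decomposition through $w_T$'' to each $\beta_T$ separately. These are the same argument unwound in a different order, and your converse direction matches the paper's verbatim.
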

  
\begin{proof}
Again, we will prove (1) only. 

 \begin{figure}
\begin{tikzpicture}
\coordinate (A) at (-1,0);
\coordinate (B) at (-5, 1);
\coordinate (C) at (-5, 3);
\coordinate (D) at (-1, 3);
\coordinate (A1) at (4,0);
\coordinate (B1) at (0, 1);
\coordinate (C1) at (0, 3);
\coordinate (D1) at (4, 3);
\coordinate (E) at (2, 2);
\coordinate (F) at (1, 1);
%
\draw [gray, top color=blue,bottom color=white, fill opacity =.3] (A) -- (B) -- (C) -- cycle;
\draw [gray, bottom color=blue,top color=white, fill opacity =.3] (A) -- (D) -- (C) -- cycle;
\draw [gray, top color=blue,bottom color=white,  opacity =.1, fill opacity =.1] (A1) -- (B1) -- (C1) -- cycle;
\draw [gray, bottom color=blue,top color=white,  opacity =.1, fill opacity =.1] (A1) -- (D1) -- (C1) -- cycle;
%
\draw[red, thick](A1) to  [bend left=25] node[below]{$r$}(B1);
\draw[red, thick](A1)to [bend left=15]node[left]{$r$}(C1);
\draw[red, thick](C1) to [bend left=15]node[left]{$r$} (B1);
\draw[red, thick](A1) to [bend left=25] node[right]{$r$}(D1);
\draw[red, thick](C1) to [bend left=25] node[above]{$r$}(D1);
\draw[red, thick, dashed](C1) to [bend left=25]node[below]{$r/2$}(E);
\draw[red, thick, dashed](A1)to [bend right=15]node[left]{$r/2$}(E);
\draw[red, thick, dashed](D1) to [bend left=15]node[below]{$r/2$}(E);
\draw[red, thick, dashed](A1) to [bend left=25](F);
\draw[red, thick, dashed](B1) to [bend left=25](F);
\draw[red, thick, dashed](C1) to [bend right=5](F);
\draw [->](2.8,2.5) arc (-30:240:.5);
\draw (2.3, 2.7) node {$<2r$};
\draw [fill] (A) circle [radius=0.1];
\draw [fill] (B) circle [radius=0.1];
\draw [fill] (C) circle [radius=0.1];
\draw [fill] (D) circle [radius=0.1];
\draw [fill] (A1) circle [radius=0.1];
\draw [fill] (B1) circle [radius=0.1];
\draw [fill] (C1) circle [radius=0.1];
\draw [fill] (D1) circle [radius=0.1];
\node [rectangle,  scale=.5, fill=black,draw]  at (E) {};
\node [rectangle,  scale=.5, fill=black,draw]  at (F) {};
\end{tikzpicture}
\caption{
A sketch of the situation of Proposition \ref{PropNullCech}. Given a configuration of abstract triangles on the left, construct an appropriate system of geodesics (red lines) of corresponding bounds on their lengths (as suggested by the red label) on the right. The dashed geodesics connect vertices to the triple intersection of the corresponding balls, arising from the existence of the triangles. Note that the decomposition into triangles on the left corresponds to the decomposition into loops of length less than $2r$ on the right.
}
\label{FigCech}
\end{figure}
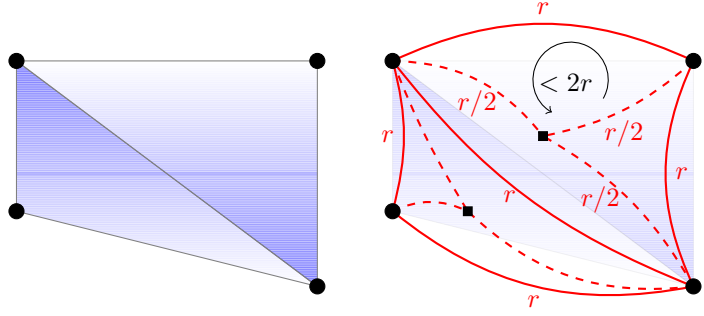
Suppose $L$, given by $\bullet=x_0, x_1, \ldots, x_k$, is \v Cech $r$-null. As in the proof of Proposition \ref{PropNull} that means that there is a \v Cech $r$-nullhomotopy given by a  triangulation $\Delta$ of a closed disc $D$. This means that if $[x,y]$ is an edge of $\Delta$ then $d(x,y)<r$. However, in this case we have an additional feature (see Figure \ref{FigCech}): for each triangle $T=[x,y,z]$ in $\Delta$ there is a point $w_T$ witnessing the existence of $T$  in $\C(X,r/2)$, i.e., $d(x,w)<r/2, d(y,w)<r/2$ and $d(z,w)<r/2.$ Hence we may take a subdivision $\Delta'$ of $\Delta$, obtained by adding all points $w_T$ and replacing each $T$ by the three triangles $[x,y,w], [x,z,w], [z,x,w]$. Note that the length of each of these three triangles as $r$-loops is less than $2r$. As in the proof of Proposition \ref{PropNull} this implies $[\alpha]_{\pi_1} \in \L(X, 2r, \pi_1)$.

Now suppose $\alpha \in \L(X, 2r, \pi_1)$. By Proposition \ref{PropCech} (3) it suffices to prove that any lasso of length less than $2r$ is \v Cech $r$-null. Assume therefore $\alpha = \gamma * \beta *\gamma^-$ where $\gamma$ is a based path and $\beta$ is a loop of length less than $2r$. It suffices to prove that any $r$-sample of $\beta$ is \v Cech $r$-null (in the unbased sense), which is true by Proposition \ref{PropCech}(2).
\end{proof}

\begin{Def}[A \v Cech version of Definition \ref{DefLambda}]
\label{DefLambdaC}
Suppose $r>0$, $X$ is a geodesic space and $G$ is an Abelian group. Maps
 $$
 \check\lambda_r^{\pi_1}\colon \pi_1(\Cech(X,r),\bullet)\to \L(X,fin, \pi_1)/
\L(X, 4r, \pi_1)
 $$
 and 
  $$
 \check\lambda_r^{G}\colon H(\Cech(X,r),G)\to \L(X,fin, G)/
\L(X, 4r, G)
 $$
 are defined by mapping an $2r$-loop to its filling.
\end{Def}

\begin{Proposition}[A \v Cech version of Proposition \ref{PropLambda}]
\label{PropLambdaC}
 For each $r>0$, each Abelian group $G$ and each geodesic space $X$, maps $ \check\lambda_r^{\pi_1}$ and $\check \lambda_r^{G}$are well defined isomorphisms. Furthermore, maps $\check\lambda^*_r$ commute with the inclusions $i_{p,q}\colon \Cech(X,p)\to \Cech(X, q)$ induced maps and the obvious quotient maps $\L(X,fin, *)/\L(X, 4p, *)\to \L(X,fin, *)/\L(X, 4q, *)$ for all $p<q$.
\end{Proposition}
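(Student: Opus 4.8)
The plan is to transcribe the proof of Proposition \ref{PropLambda} essentially verbatim, substituting the \v Cech ingredients (Propositions \ref{PropCech} and \ref{PropNullCech}) for the Rips ones (Propositions \ref{PropRips}(6) and \ref{PropNull}) and tracking the parameter $4r$ in place of $3r$. The bookkeeping starts from the observation already noted in the paper: the $1$-skeleta of $\Cech(X,r)$ and $\Rips(X,2r)$ coincide, so a simplicial loop in $\Cech(X,r)$ is exactly a $2r$-loop, and homotopy in $\Cech(X,r)$ is precisely \v Cech $2r$-homotopy. Thus an element of $\pi_1(\Cech(X,r),\bullet)$ is a based $2r$-loop considered up to \v Cech $2r$-homotopy, and $\check\lambda_r^{\pi_1}$ is a candidate map $L\mapsto[\text{filling of }L]$. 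The $4r$ appears because a \v Cech $2r$-null $2r$-loop decomposes, via the witnesses of its triangles, into loops of length less than $2\cdot 2r$, as quantified by Proposition \ref{PropNullCech} applied with its parameter equal to $2r$.

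First I would verify that $\check\lambda_r^{\pi_1}$ is well defined. For a fixed $2r$-loop $L$, Proposition \ref{PropDif} with parameter $2r$ shows that any two fillings $\alpha,\beta$ of $L$ satisfy $[\alpha*\beta^-]_{\pi_1}\in\L(X,4r,\pi_1)$, so the output class is independent of the chosen filling. If two based $2r$-loops $L,L'$ are \v Cech $2r$-homotopic, then $L*(L')^-$ is a \v Cech $2r$-null $2r$-loop admitting the filling $\alpha*\beta^-$, whence $[\alpha*\beta^-]_{\pi_1}\in\L(X,4r,\pi_1)$ by Proposition \ref{PropNullCech}(1) at parameter $2r$; so $\check\lambda_r^{\pi_1}$ descends to $\pi_1(\Cech(X,r),\bullet)$. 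It is a homomorphism because a filling of a concatenation of $2r$-loops is the concatenation of the fillings (as recorded in Definition \ref{DefRLoop}). Injectivity is exactly the converse direction of Proposition \ref{PropNullCech}(1): if the filling $\alpha$ of $L$ represents an element of $\L(X,4r,\pi_1)$, then $L$ is \v Cech $2r$-null, i.e. trivial in $\pi_1(\Cech(X,r),\bullet)$.

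For surjectivity I would copy the argument from Proposition \ref{PropLambda}: given a finite-length based loop $\alpha$, choose a $2r$-sample $L$ with $t_0=0$ fine enough that the length of $\alpha$ between each pair of consecutive sample points is less than $2r$; then for any filling $\beta$ of $L$ the loop $\alpha*\beta^-$ is a concatenation of lassos whose loops have length less than $4r$ (the restriction $\alpha|_{[t_i,t_{i+1}]}$ has length less than $2r$ and the geodesic between consecutive sample points is no longer), so $[\alpha*\beta^-]_{\pi_1}\in\L(X,4r,\pi_1)$ and $\check\lambda_r^{\pi_1}([L]_{\pi_1})$ is the class of $\alpha$ in $\L(X,fin,\pi_1)/\L(X,4r,\pi_1)$; any other $2r$-sample of $\alpha$ yields the same element of $\pi_1(\Cech(X,r),\bullet)$ by Proposition \ref{PropCech}(1). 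Commutativity of $\check\lambda_r^{*}$ with the bonding maps induced by $i_{p,q}\colon\Cech(X,p)\to\Cech(X,q)$ and with the quotient maps $\L(X,fin,*)/\L(X,4p,*)\to\L(X,fin,*)/\L(X,4q,*)$ is immediate from the definitions, since in either composite a class represented by a $2p$-loop $L$ is sent to a class represented by a filling of $L$. The homological statement for $\check\lambda_r^{G}$ is proved the same way, invoking part (2) of Propositions \ref{PropNullCech} and \ref{PropCech} and dropping the basepoint bookkeeping.

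There is no genuinely new difficulty here: every step is the corresponding step in the proof of Proposition \ref{PropLambda} with Proposition \ref{PropNull} replaced by Proposition \ref{PropNullCech}, Proposition \ref{PropRips}(6) by Proposition \ref{PropCech}(3), and $3r$ by $4r$. The one place that demands care is the parameter bookkeeping around the identification of the $1$-skeleta of $\Cech(X,r)$ and $\Rips(X,2r)$ — one must consistently remember that an element of $\pi_1(\Cech(X,r),\bullet)$ is a $2r$-loop, so that Propositions \ref{PropNullCech} and \ref{PropDif} are always applied at parameter $2r$, which is precisely what produces the $4r$ in the quotient and, downstream, the factor $3/4$ relating the Rips and \v Cech persistences.
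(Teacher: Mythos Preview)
Your proof is correct and follows essentially the same approach as the paper's: well-definedness and injectivity via Propositions \ref{PropDif} and \ref{PropNullCech}, surjectivity by sampling a finite-length loop and comparing to a filling, and commutativity directly from the definitions. The only inessential difference is that for surjectivity the paper chooses an $r$-sample (landing in $\L(X,2r,\pi_1)\subset\L(X,4r,\pi_1)$) whereas you take a $2r$-sample landing directly in $\L(X,4r,\pi_1)$; both give the same conclusion, and your parameter bookkeeping around the identification of $1$-skeleta is handled correctly.
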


\begin{proof}
The proof is essentially the same as that of Proposition \ref{PropLambda}.
Maps are well defined by Propositions \ref {PropDif} and  \ref{PropNullCech} and injective by Proposition \ref{PropNullCech}. It remains to prove they are also surjective. We provide the proof for $ \lambda_r^{\pi_1}$, the homological case can be proved in an identical manner. 
 
 Take any (based) loop $\alpha$ in $X$ of finite length. Choose an $r$-sample $L$ of $\alpha$ so that the length of $\alpha$ between each pair of consecutive points is less than $r$. It is easy to see (check for example a similar argument in Proposition \ref{PropDif}) that for each filling $\beta$ of $L$ we have $[\alpha * \beta^-]_{\pi_1}\in \L(X,2r,\pi_1)$, hence $L$ is mapped to the equivalence class of $\alpha$ in $ \L(X,fin, \pi_1)/\L(X, 4r, \pi_1)$ by $\lambda_r^{\pi_1}$.
 
The final claim about commutativity follows easily from the definitions, as in both direction we map a class represented by an $r$-loop $L$ to a class represented by a filling of $L$.
\end{proof}

\begin{Theorem} 
\label{ThmRipsCech}
[Rips-\v Cech correspondence Theorem]
For every geodesic space $X$ and for every Abelian group $G$ there are isomorphisms  of persistences
$$
\big\{\pi_1(\Cech(X,3r),\bullet)\big\}_{r>0}\cong \big\{\pi_1(\Rips(X,4r), \bullet) \}_{r>0}
$$
 and 
$$
\big\{H_1(\Cech(X,3r),G)\}_{r>0}\cong \big\{H_1(\Rips(X,4r), G)\}_{r>0}.
$$
\end{Theorem}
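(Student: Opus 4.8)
The plan is to identify \emph{both} persistences with one and the same ``length'' persistence, namely $\big\{\L(X,fin,*)/\L(X,12r,*)\big\}_{r>0}$, using the correspondence theorems already established for the Rips and the \v Cech filtrations. Note that Theorem \ref{ThmPerCirc} and Proposition \ref{PropLambdaC} are stated for arbitrary geodesic spaces, so no compactness hypothesis enters.

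First I would apply Theorem \ref{ThmPerCirc} after the substitution $r\mapsto 4r$. Since $r\mapsto 4r$ is an order isomorphism of $(0,\infty)$, it carries the persistence $\{\pi_1(\Rips(X,s),\bullet)\}_{s>0}$ to $\{\pi_1(\Rips(X,4r),\bullet)\}_{r>0}$ and preserves all bonding maps, while on the target side $\L(X,fin,\pi_1)/\L(X,3s,\pi_1)$ becomes $\L(X,fin,\pi_1)/\L(X,12r,\pi_1)$. Hence the maps $\lambda^{\pi_1}_{4r}$ give an isomorphism
$$
\big\{\pi_1(\Rips(X,4r),\bullet)\big\}_{r>0}\;\cong\;\big\{\L(X,fin,\pi_1)/\L(X,12r,\pi_1)\big\}_{r>0},
$$
and likewise $\lambda^{G}_{4r}$ in the homological setting. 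Next I would do the same with Proposition \ref{PropLambdaC}, now substituting $r\mapsto 3r$; recalling that the $1$-skeleton of $\Cech(X,3r)$ consists of the $6r$-loops of $X$, the map $\check\lambda^{\pi_1}_{3r}$ sends such a loop to its filling and yields
$$
\big\{\pi_1(\Cech(X,3r),\bullet)\big\}_{r>0}\;\cong\;\big\{\L(X,fin,\pi_1)/\L(X,12r,\pi_1)\big\}_{r>0},
$$
and analogously $\check\lambda^{G}_{3r}$ for homology.

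Composing, $(\lambda^{\pi_1}_{4r})^{-1}\circ\check\lambda^{\pi_1}_{3r}$ is the desired isomorphism $\{\pi_1(\Cech(X,3r),\bullet)\}_{r>0}\cong\{\pi_1(\Rips(X,4r),\bullet)\}_{r>0}$, and the same for $H_1(\_,G)$. The only point requiring a word of justification is that this composite respects the filtration structure: but by Theorem \ref{ThmPerCirc} (via Proposition \ref{PropLambda}) and by Proposition \ref{PropLambdaC}, both $\lambda^*$ and $\check\lambda^*$ intertwine the inclusion-induced bonding maps with the \emph{same} family of natural quotient maps $\L(X,fin,*)/\L(X,12p,*)\to \L(X,fin,*)/\L(X,12q,*)$ for $p<q$, so the composite automatically commutes with bonding maps. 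The argument is essentially bookkeeping; the only potential pitfall is the consistent tracking of the scale factors ($3$ for the Rips-to-length correspondence, $4$ for the \v Cech-to-length correspondence, hence the ratio $3/4$ between Rips and \v Cech), and this is transparent precisely because both $\lambda$ and $\check\lambda$ are realized by the same geometric operation of passing from a simplicial loop to its geodesic filling, so after the reparametrizations their targets coincide literally, not merely abstractly.
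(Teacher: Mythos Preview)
Your proof is correct and follows exactly the paper's approach: both persistences are identified with $\{\L(X,fin,*)/\L(X,12r,*)\}_{r>0}$ via Proposition \ref{PropLambda} (equivalently Theorem \ref{ThmPerCirc}) after the substitution $r\mapsto 4r$ and Proposition \ref{PropLambdaC} after $r\mapsto 3r$. The paper's proof is a one-line invocation of these two propositions, and your write-up simply spells out the reparametrization and the compatibility with bonding maps in more detail.
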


\begin{proof}
 By Propositions \ref{PropLambda} and \ref{PropLambdaC} both persistences are isomorphic to 
 $$\L(X,fin, *)/
\L(X, 12r, *).$$
\end{proof}

\section{Persistence for closed complexes, length spaces and applications}
\label{SectClosed}

\subsection{Persistence of compact geodesic spaces for filtrations by closed complexes}

A theory parallel to the one presented in this paper could also be developed using closed filtrations, i.e., filtrations by closed Rips and \v Cech complexes of compact geodesic spaces.  At some places though (for example in Proposition \ref{PropRips}(8))  additional technical work would be required to obtain equivalent results. Rather than writing down a parallel theory we deduce the results for closed complexes directly from the obtained results using the following arguments:
\begin{enumerate}
 \item \textbf{All bounding maps $\bar i_{p,q}\colon \pi_1(\cRips(X,p),\bullet)\to \pi_1(\cRips(X,q), \bullet)$ and their homological counterpart are surjective for all $p<q$.} The same holds for closed \v Cech filtrations. The proof is identical to the one of Proposition \ref{PropRips}(9).
 \item \textbf{For each $\eps>0$ there is an obvious $\eps$-interleaving between persistences $\{ \pi_1(\cRips(X,r),\bullet)\}$ and $\{ \pi_1(\Rips(X,r),\bullet)\}$ induced by inclusion maps.} The same holds for the homological counterparts and for closed \v Cech filtrations.  
\end{enumerate}

\begin{Remark}
Case (2) above  essentially implies that the observable structures, in the sense of \cite{Cha0}, of  open and closed filtrations are the same. Note that the observable structure in \cite{Cha0} was only defined for what is usually called a persistence module, that is a persistence where the underlying algebraic objects are vector spaces. That definition can be  generalized in an obvious way to persistences in a more general setting as considered in this paper.
\end{Remark}

From these arguments we may deduce the following conclusions for the Rips induced $\pi_1$- and $H_1$-induced persistences:
\begin{description}
\item [a] For each $r>0$, $\eps>0$ and Abelian group $G$ the maps induced on $\pi_1$ and $H_1(\_, G)$ by inclusions $\Rips(X,r)\hookrightarrow \cRips(X, r + \eps)$ and $\cRips(X,r)\hookrightarrow \Rips(X, r + \eps)$ are surjective. This follows easily from the fact that these induced maps commute with the corresponding surjective maps induced by inclusions $i_{p,q}$ and $\bar i_{p,q}$, see (1) above and Proposition \ref{PropRips}(9).
 \item[b] Critical values (and the corresponding kernels) of open and closed filtrations coincide, which follows  from (1),(2), \textbf{a}, Theorem \ref{PropFin}(1) and Theorem \ref{PropHFin}(1).
 \item [c] The direction of critical values is different: open filtrations induce right critical values while closed filtrations induce left critical values. To see this recall that each critical value $c$ for an open filtration is generated by geodesic $3c$-lassos (Theorem \ref{PropFin}) or geodesic circles of length $3c$ (Theorem \ref{PropHFin}). As in Proposition \ref{PropRips}(5) it is easy to see that any such geodesic lasso or geodesic circle is contractible when represented in $\cRips(X, 3c)$. For such representation we have to employ closed $3c$-samples, which are the same as $3c$-samples of Definition \ref{DefRLoop} with the only difference that strict inequalities $<$ are replaced by $\leq$.
\end{description}

Analogous conclusions also hold for \v Cech filtrations.
Note that argument (1) is required for conclusion \textbf{b}, which in general does not hold in higher dimensions or other settings. For example, the two-dimensional persistence of $S^1$ via open complexes is trivial while the one via closed is not by \cite{AA}. In this case open and closed Rips persistences do not determine each other.

We sum up the conclusions and their implications via Theorems \ref{ThmPerSpa}, \ref{ThmPerCirc}, \ref{ThmPerDiam}, \ref{ThmPerBasis}, \ref{ThmLexicograph}, and \ref{ThmRipsCech}  in the following theorem. 

\begin{Theorem}
 \label{ThmClosedPers}
 For every compact geodesic space $X$  there are isomorphisms  of persistences:
\begin{enumerate}
\item $\big\{\pi_1(\cRips(X,4r), \bullet) \}_{r>0}$
 \item $\big\{\pi_1(\cCech(X,3r),\bullet)\big\}_{r>0}$
 \item $\{\pi_1(X, \bullet)/\overline\S(X, 6r, \pi_1)\}_{r>0}$
  \item $\big\{\L(X,fin, \pi_1)/\overline\L(X, 12r, \pi_1)\big\}_{r>0}$
   \item $\{\pi_1(X, \bullet)/\overline\D(X, 6r, \pi_1)\}_{r>0}$
   \item $\big\{\pi_1(X, \bullet)/\{\beta_i\big\}_{l_i\leq r}\}_{r>0}$ for some collection of geodesic lassos $\{\beta_i\}_{i\in J}$, with each $\beta_i$ being an $l_i$-lasso, whose normal closure is $\pi_1(X, \bullet)$,
\end{enumerate}
 where $\overline \S, \overline L$ and $\overline D$ are closed versions of $\S, \L$ and $\D$, i.e., 
\begin{itemize}
 \item $\overline \S(X, r, *)$ is generated by loops of size at most $r$ or by $U_d$-lassos with $d\leq r$ (see Definition \ref{DefUr});
 \item $\overline \L(X, r, *)$ is generated by loops of length at most $r$ or by $l$-lassos with $l \leq r$ (see Definition \ref{DefL});
\item $\overline \D(X, r, *)$ is defined analogously using Definition \ref{DefDr}.
\end{itemize}
 
Analogous isomorphisms hold for $H_1$-persistence for each  Abelian group $G$. Furthermore, if $G=\FF$ is a field and $X$ is $\FF$-SLSC then the corresponding $H_1$-persistence is also isomorphic to,
$$
\bigoplus_{i=1}^n (\II_{(0, 12 l_i)})_r,
$$
where $A=\{ a_1, a_2, \ldots a_n\}$ is a lexicographically minimal base for $H_1(X, \FF)$  with lengths $l_i=\length(a_i)$.
\end{Theorem}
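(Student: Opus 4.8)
The plan is to deduce the entire list of isomorphisms from the open-complex theory already established, together with the two structural facts (1) (surjectivity of all closed bonding maps) and (2) (the $\eps$-interleaving between open and closed filtrations) and the finiteness of critical values in Theorems \ref{PropFin} and \ref{PropHFin}. The guiding principle is this: by (1), (2), \textbf{a} and \textbf{b}, the open and closed Rips filtrations (and likewise the open and closed \v Cech filtrations) have exactly the same critical values and the same kernels of bonding maps; the only change, recorded in \textbf{c}, is the \emph{direction} of each critical value. Since a geodesic circle of length $3c$ (resp.\ a geodesic $3c$-lasso) bounds in $\cRips(X,3c)$ but in no $\Rips(X,3c')$ with $c'<c$, passing from the open to the closed setting amounts precisely to replacing every strict size bound $<$ by the non-strict bound $\leq$, i.e.\ to replacing $\S,\L,\D$ by $\overline\S,\overline\L,\overline\D$.

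First I would restate the open-complex correspondences in the parametrisation used here: substituting $r\mapsto 4r$ in Theorems \ref{ThmPerSpa}, \ref{ThmPerCirc} and \ref{ThmPerDiam} identifies $\{\pi_1(\Rips(X,4r),\bullet)\}_{r>0}$ with each of $\{\pi_1(X,\bullet)/\S(X,6r,\pi_1)\}_{r>0}$, $\{\L(X,fin,\pi_1)/\L(X,12r,\pi_1)\}_{r>0}$ and $\{\pi_1(X,\bullet)/\D(X,6r,\pi_1)\}_{r>0}$, while Theorem \ref{ThmPerBasis} and Proposition \ref{PropLassoGens} furnish the geodesic-lasso presentation, and Theorem \ref{ThmRipsCech} identifies all of these with $\{\pi_1(\Cech(X,3r),\bullet)\}_{r>0}$. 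I would then run the identical construction for closed complexes: the closed-sample analogues of Theorem \ref{ThmNull} and of Propositions \ref{PropNullCech} and \ref{PropLambdaC} yield closed versions of these correspondences, which are the same statements with $\S,\L,\D$ replaced by $\overline\S,\overline\L,\overline\D$ and $\Rips,\Cech$ replaced by $\cRips,\cCech$. Finally, (1), (2) and \textbf{b} force the resulting open and closed persistences to be genuinely isomorphic, not merely $\eps$-interleaved, which glues items (1)--(6) into a single chain. That all the structure maps commute is routine, since every isomorphism involved is induced by samples and fillings.

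For a general Abelian group $G$ the argument runs verbatim with $\pi_1$ replaced by $H_1(\_,G)$, using Theorems \ref{PropHFin} and \ref{ThmHPerBasis} in place of Theorems \ref{PropFin} and \ref{ThmPerBasis}. When $G=\FF$ is a field and $X$ is $\FF$-SLSC, Theorem \ref{ThmLexicograph} already decomposes the open Rips persistence into interval modules over the intervals $(0,l_i/3]$; reparametrising by $r\mapsto 4r$ and then passing to the closed filtration turns each of these right-closed intervals into the corresponding right-open one, since by \textbf{c} the geodesic circle $a_i$ of length $l_i$ is already contractible in $\cRips$ at the parameter at which it dies. This gives the displayed interval-module decomposition.

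The main obstacle, and the only genuinely new work beyond bookkeeping, is conclusion \textbf{c}: one must establish, for closed samples, the analogues of Proposition \ref{PropRips}(5) and (8) --- that a loop of length at most $3r$ has an $r$-null closed $r$-sample, and that pointwise-close closed $r$-loops of equal size are closed $r$-homotopic --- and then check, through the hexagon-attachment description of critical triangles in Definition \ref{DefCritTri}, that switching from open to closed samples turns no non-injective bonding map into an injective one (nor conversely). Once \textbf{c} is secured, the remainder is assembling the correspondence, Rips--\v Cech and basis theorems together with their homological counterparts.
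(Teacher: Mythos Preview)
Your proposal is correct and follows essentially the same route as the paper. The paper does not give a formal proof of this theorem at all: it is presented as a summary (``We sum up the conclusions and their implications via Theorems \ref{ThmPerSpa}, \ref{ThmPerCirc}, \ref{ThmPerDiam}, \ref{ThmPerBasis}, \ref{ThmLexicograph}, and \ref{ThmRipsCech}\ldots'') of the preceding discussion, namely arguments (1)--(2) and conclusions \textbf{a}--\textbf{c}, and your sketch unpacks precisely this.

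One minor methodological difference is worth flagging. You propose to ``run the identical construction for closed complexes,'' i.e.\ to re-establish closed-sample analogues of Theorem \ref{ThmNull} and Propositions \ref{PropNullCech}, \ref{PropLambdaC}. The paper explicitly declines to do this (``Rather than writing down a parallel theory we deduce the results for closed complexes directly from the obtained results''): it instead argues purely from the $\eps$-interleaving (2), surjectivity (1), and the local finiteness of critical values (Theorems \ref{PropFin}\textbf{a}, \ref{PropHFin}\textbf{a}) that the closed persistence is completely determined by the open one, with each right-critical value becoming left-critical. Your hybrid approach is not wrong, but it does more work than the paper claims is necessary; the paper's point is that once \textbf{a}--\textbf{c} are in hand, the closed isomorphisms follow formally from the open ones without revisiting samples and fillings.
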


\subsection{Application: metric graphs}
A metric graph is a geodesic space obtained by assigning to each edge of a finite graph a positive length. For details see \cite{7A}, where the authors compute their $1$-dimensional intrinsic persistence diagram via open \v Cech complexes using coefficients in a field. Using the results of this paper we may extend their results to all fields (and groups), all complexes and provide the direction of the corresponding critical values, i.e., we provide all possible corresponding decorated persistence diagrams (see \cite{Cha0} for an introduction to decorated persistence diagrams). In fact we could even express the $\pi_1$-persistence.

\begin{Theorem}
[A generalization Theorem 1.1 of \cite{7A}]
 Suppose $X$ is a metric graph, $\FF$ is a field and let $A=\{ a_1, a_2, \ldots a_n\}$ be a lexicographically minimal base for $H_1(X, \FF)$  with lengths $l_i=\length(a_i)$. Then the $H_1(\_, \FF)$-persistence of $X$ is:
\begin{itemize}
 \item $\bigoplus_{i=1}^n (\II_{(0, l_i/3]})_r$ for persistence via open Rips filtration,
  \item $\bigoplus_{i=1}^n (\II_{(0, l_i/3)})_r$ for persistence via closed Rips filtration,
   \item $\bigoplus_{i=1}^n (\II_{(0, l_i/4]})_r$ for persistence via open \v Cech  filtration,
    \item $\bigoplus_{i=1}^n (\II_{(0, l_i/4)})_r$ for persistence via closed \v Cech  filtration.
\end{itemize}
\end{Theorem}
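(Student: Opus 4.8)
The plan is to recognise a metric graph as a special instance of the spaces treated above and then to invoke the structural theorems. First I would record the elementary facts: a connected metric graph $X$ on a finite graph is a compact geodesic space (a finite union of isometrically embedded compact segments, with distances realised by paths), and it is a finite CW complex, hence locally contractible, hence SLSC and $\FF$-SLSC for every field $\FF$; the disconnected case reduces to this componentwise. Thus $X$ meets the hypotheses of Theorem \ref{ThmLexicograph}, which produces the lexicographically minimal base $A=\{a_1,\dots,a_n\}$ of the statement together with the isomorphism
$$
\big\{H_1(\Rips(X,r),\FF)\big\}_{r>0}\cong\bigoplus_{i=1}^n(\II_{(0,l_i/3]})_r .
$$
This is exactly the asserted open Rips persistence; the right endpoints are closed because, by Corollary \ref{CorDirCrit}, an open filtration admits only right critical values.

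Next I would turn to the closed Rips filtration. By the analysis of Section \ref{SectClosed} (item \textbf{c}, summarised in Theorem \ref{ThmClosedPers}), the open and closed Rips persistences have the same critical values $\{l_i/3\}$ and the same kernels, the sole difference being that each $l_i/3$ now occurs as a \emph{left} critical value; accordingly every summand $\II_{(0,l_i/3]}$ is replaced by $\II_{(0,l_i/3)}$, giving the closed Rips statement.

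For the two \v Cech filtrations I would apply the Rips--\v Cech correspondence Theorem \ref{ThmRipsCech}, which gives
$$
\big\{H_1(\Cech(X,3r),\FF)\big\}_{r>0}\cong\big\{H_1(\Rips(X,4r),\FF)\big\}_{r>0}.
$$
Substituting the open Rips module computed above and re-indexing by $r\mapsto 4r$ makes the right-hand side $\bigoplus_i(\II_{(0,l_i/12]})_r$; setting $s=3r$ then rescales this to $\bigoplus_i(\II_{(0,l_i/4]})_s$, the claimed open \v Cech persistence. The closed \v Cech statement follows the same way, either from the closed version of the correspondence contained in Theorem \ref{ThmClosedPers}, or by repeating the computation with the closed Rips module $\bigoplus_i(\II_{(0,l_i/3)})_r$, which opens the right endpoints and yields $\bigoplus_i(\II_{(0,l_i/4)})_s$.

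The argument uses nothing beyond the general theory except the immediate observation that a finite metric graph is compact, geodesic and locally contractible, so there is no real obstacle. The one step demanding care is the bookkeeping of the interval decorations across the four filtrations together with the $3/4$ rescaling of the parameter under the Rips--\v Cech correspondence --- and this bookkeeping is exactly the extra information, over what is recorded in \cite{7A}, that the theorem is meant to supply.
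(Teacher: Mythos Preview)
Your proposal is correct and matches the paper's intent: the paper states this theorem as a direct corollary of the general theory (Theorems \ref{ThmLexicograph}, \ref{ThmRipsCech}, \ref{ThmClosedPers} together with the observation that a metric graph is compact, geodesic and locally contractible) and provides no separate proof. Your write-up simply makes explicit the application and the parameter bookkeeping that the paper leaves to the reader.
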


\subsection{Generalizations to length spaces and other relaxations}

The compactness of $X$ was only needed for results from Section \ref{SectStruc} on, excluding Section \ref{SectCech}. In a non-compact case the minimal generators of Theorems \ref{ThmPerBasis} and \ref{ThmLexicograph} may not exist (as in $\RR^2 \setminus \overline B((0,0),1) $) and the set of critical values may be any subset of positive real numbers, as demonstrated by the wedge of circles of all positive circumferences. Also, there might be no difference between the type of  individual critical values when using open or closed filtrations.

The results up to  Section \ref{SectLimit} and of Section \ref{SectCech} also hold for length spaces, i.e., spaces, where the distance between each pair of points is the infimum of the lengths of paths between them. One could use the same arguments as presented in this paper to develop a theory for them. This would only be interesting for  non-compact spaces as compact length spaces are geodesic by the Hopf-Rinow Theorem.

\section{Discussion and future work}
\label{SectFW}
The theory presented in this paper serves as a basis for a substantial future work:
\begin{itemize}
 \item \textbf{Stability and approximation}: In the subsequent paper \cite{ZV1} we show that the $1$-dimensional intrinsic persistence may be well approximated by finite samples and is much more stable than the standard Stability Theorem implies. In fact, it turns out that for a locally contractible compact geodesic space there exists a finite subset whose persistence determines the persistence of the underlying space thus eliminating the limiting argument.
  \item \textbf{Extracting geometric features from higher dimensional persistence}: By the theory of this paper each critical value corresponds to a geodesic circle. Using results from \cite{AA} we may prove that at least in a nice setting any geodesic circle will generate essential $S^3$ in the corresponding complex, detectable by any persistent homology. In particular, higher dimensional homology with coefficients in any field may provide us with all critical points for $\pi_1$-persistence, thus making a computational fundamental group (at least the critical points) nicely computable. We may thus use higher-dimensional persistent homology to distinguish certain homotopic loops (geodesic circles). In general we expect to be able to extract more geometric information from persistence and potentially use results of \cite{AAF} to detect higher-dimensional isometrically embedded spheres.
\end{itemize}

\section*{Comments on this version of the manuscript}

This version of the manuscript has been completed in 2024, four years after the journal publication of the article. Aside from corrected typos and slightly improved disposition, it contains a small mathematical correction: The $l$-lassos $\alpha * \beta * \alpha^-$ in Definition \ref{DefL} should be of finite length. In the journal version only $\beta$ was required to be of finite length, while in fact, $\alpha$ should be of finite length as well. In this version, the said error is corrected. All the main results remain as stated (with the same numbering). The change is only relevant in spaces, which are not SLSC (semi-locally simply connected), as for SLSC spaces both versions of Definition \ref{DefL} are equivalent. Accordingly, a minor modification (i.e., only the assumption/verification that the relevant paths and loops are of finite length) was required in the statement of Proposition \ref{PropIneq} and the proofs of Proposition \ref{PropIneq}, Proposition \ref{PropNull}, Proposition \ref{PropLength}, Definition \ref{DefGalpha}, Lemma \ref{LemmaConvergence}, Theorem \ref{PropFin}.


\end{document}